\documentclass[11pt]{article}
\usepackage{bbm}
\usepackage{amsfonts}
\usepackage{amssymb}
\usepackage{stmaryrd}
\usepackage{bm}
\usepackage{bbding}
\usepackage{amsmath}
\usepackage[colorlinks,
            linkcolor=blue,
            anchorcolor=blue,
            citecolor=blue
            ]{hyperref}
 \usepackage{latexsym,amssymb,mathrsfs,fancyhdr}
\font\tenmsb=msbm10 \font\sevenmsb=msbm7 \font\fivemsb=msbm5
\catcode`\@=11 \ifx\amstexloaded@\relax\catcode`\@=\active
\endinput\else\let\amstexloaded@\relax\fi
\def\spaces@{\space\space\space\space\space}
\def\spaces@@{\spaces@\spaces@\spaces@\spaces@\spaces@}
\def\relaxnext@{\let\next\relax}
\def\accentfam@{7}

\def\noaccents@{\def\accentfam@{0}}
\def\mathcal{\relaxnext@\ifmmode\let\next\mathcal@\else
\def\next{\Err@{Use
\string\mathcal\space only in math mode}}\fi\next}
\def\mathcal@#1{{\mathcal@@{#1}}} \def\mathcal@@#1{\noaccents@\fam\tw@#1}

\def\1{{\frac12}}

    \def\eqref #1{{(\ref{#1})}}
\def\Bbb{\relaxnext@\ifmmode\let\next\Bbb@\else
\def\next{\Err@{Use \string\Bbb\space only in math mode}}\fi\next}

\def\Bbb@#1{{\Bbb@@{#1}}}
\def\Bbb@@#1{\noaccents@\fam\msbfam#1}
\newfam\msbfam \textfont\msbfam=\tenmsb \scriptfont\msbfam=\sevenmsb
\scriptscriptfont\msbfam=\fivemsb
\def\N{{\mathbb N}} \def\Z{{\mathbb Z}}   
\def\R{{\mathbb R}} \def\T{{\mathbb T}}  
\def\C{{\mathbb C}}

\newtheorem{Theorem}{Theorem}
\newtheorem{Lemma}{Lemma}[section]

\newtheorem{Proposition}{Proposition}[section]

\newtheorem{Remark}{Remark}[section]

\newtheorem{definition}{Definition}[section]

\@addtoreset{equation}{section}

\newcommand{\qed}{\nolinebreak\hfill\rule{2mm}{2mm}
\par\medbreak}

\newcommand{\la}{\langle } \newcommand{\ra}{\rangle }

\newcommand{\beq}{\begin{equation} } \newcommand{\eeq}{\end{equation} }

\begin{document}

\setlength{\columnsep}{5pt}
\title{Quasi-periodic  solutions of NLS with Liouvillean Frequency}

\author{Xindong  Xu$^a$, Jiangong You$^b$, Qi Zhou$^c$\\
{\footnotesize a: School of Mathematics, Southeast University }\\
{\footnotesize  Nanjing 210096, P.R.China}\\
{\footnotesize b: Chern Institute of Mathematics and LPMC, Nankai University }\\
{\footnotesize  Tianjin 300071, P.R.China}\\{\footnotesize c: Department of Mathematics, Nanjing University, }\\
{\footnotesize Nanjing 210093, P.R.China}\\
 {\footnotesize Email: xindong.xu@seu.edu.cn, jyou@nankai.edu.cn, qizhou@nju.edu.cn  }}

\date{}
\maketitle

\noindent{\bf Abstract:}\\

Quasi-periodic  solutions  with Liouvillean frequency of forced nonlinear  Schr\"odinger equation are constructed. This is based on an infinite dimensional KAM theory for Liouvillean frequency.
\\

\noindent{\bf R\'esum\'e:}\\

Les solutions quasi-p\'eriodiques avec les fr\'equences de Liouville de l'\'equation de Schr\"odinger non lin\'eaire forc\'ee sont construites.  C'est fond\'e sur la th\'eorie KAM de dimension infinie pour la fr\'equence de Liouville.

\section{Introduction}

In 1989's, Kuksin  \cite{ku} first constructed quasi-periodic solutions for $1$d NLS  equation with  Dirichlet boundary conditions
 by infinite dimensional KAM theory. Following \cite{ku}, mathematicians study Hamiltonian PDEs (such  as wave equation, KDV and etc) with periodic boundary condition or in higher space dimension,  many  other methods are developed. They also consider Hamiltonian PDEs with derivative or finitely differentiable nonlinearities. For more details, one may refer  to \cite{BB1,Bourgain3,EK,GY2,GXY,LiuYuan,KPo,KLiang,Ku2, Wayne} and the references therein.

Note that all the  quasi-periodic solution constructed above must satisfy  some Diophantine condition. This is  the key observation by Kolmogorov in 1954. We recall a vector $\omega\in\R^d$ is said to be  {\it Diophantine} if $$|\la k,\omega\ra|\geq {\gamma\over |k|^\tau},\quad k\in \Z^d\backslash\{0\}.$$ 
Later, people find results which works for Diophantine condition can be parallelly generalized to Brjuno condition, which is 
$$\mathcal{B}(\omega):= \sum_{n\geq 0} \frac{1}{2^n} \max_{0<\|k\|\leq 2^n,k\in\Z^d} \ln \frac{1}{ |\la k,\omega\ra| }<\infty.$$ 
If $\omega$ is not Brjuno,  we call it is  Liouvillean.    The question is that whether it is possible to obtain some quasi-periodic solution with Liouvillean frequency?  In this paper, we will establish the existence of quasi-periodic solution beyond Brjuno frequency.  Before introducing the precise result, we need to give some necessary definitions.

For $\bar \omega=({\bar\omega_1},\bar\omega_2)$ with $\bar\omega_1=(\alpha,1)$, $\alpha\in \R\backslash \mathbb Q$, $\bar \omega_2\in \R^d$,  we  say that the frequency $\bar \omega$ is  \textit{weak Liouvillean}, if there exist $\gamma>0$ and $\tau>d+6$, such that
\begin{eqnarray}\label{frequency assumption}\left\{
\begin{array}{ll}
 \beta(\alpha):=\limsup\limits_{n>0} \frac{\ln\ln q_{n+1}}{\ln q_n}<\infty, &\\
\label{wl} |\la k,\bar \omega_1\ra+\la l,\bar \omega_2\ra|\geq {\gamma\over (|k|+|l|)^\tau},  &{\rm for }\,   k\in \Z^2,l\in \Z^d\backslash \{0\}.
\end{array}\right.
\end{eqnarray}
where $\frac{p_n}{q_n}$  is  the continued fraction approximates to $\alpha$ (c.f. section \ref{sec:2.1}).  Denote by $WL(\gamma, \tau,\beta)$ the set of such frequency  and by $WL$ the union
\[WL=\bigcup_{\gamma>0,\tau>d+6,\beta<\infty}WL(\gamma, \tau, \beta). \]
 It is obvious that  $WL$ is of full Lebesgue
measure, and if $\bar\omega\in WL$, then it is not necessarily  to be  Brjuno. \footnote{In the case $d=2$,  if $\mathcal{B}(\bar \omega_1)<\infty$, then $\beta(\alpha)=0$.}

While our method works for other Hamiltonian PDEs, as an example, we  study the quasi-periodic solution of  forced  NLS:
\begin{equation}\label{equ}
 iu_t- u_{xx}+v(x) u+\epsilon f(\omega t,x,u,\bar u;\xi)=0
\end{equation} on segment $[0,\pi]$ with Dirichlet boundary condition
$$u(t,0)=0=u(t,\pi),-\infty <t<+\infty.$$ Our main result is the following:

\begin{Theorem}\label{thm-appli-1}
Let $\bar \omega\in WL$,  $\xi\in\mathcal O=({1\over2},{3\over2})$.
 The function $v(x)$ is real analytic with $\int_0^\pi |v(x)|dx<1$,
 $f(\Theta,x,u,\bar u;\xi)$ is assumed to be real analytic on $\Theta,x,u,\bar u$ and Lipschitz on $\xi$.    Then for
any small $\gamma>0$, there exists $\epsilon_0>0$ and $\mathcal O_\gamma\subset \mathcal O$ with $| \mathcal O \backslash\mathcal O_\gamma | =O(\gamma) $,  such that equation $(\ref{equ})$ has a $C^\infty$ smooth  quasi-periodic  solution with frequency $\omega=\xi \bar{\omega}$  for any $\xi\in\mathcal O_\gamma$ if $\epsilon<\epsilon_0$.
\end{Theorem}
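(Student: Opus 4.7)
The plan is to cast (\ref{equ}) as an infinite--dimensional Hamiltonian system on a weighted sequence space, put it in the standard KAM normal form about the trivial solution $u\equiv 0$, and then invoke the infinite--dimensional KAM theorem for Liouvillean frequency announced in the abstract. First I would diagonalize the Sturm--Liouville operator $L=-\partial_{xx}+v(x)$ on $[0,\pi]$ with Dirichlet boundary conditions, producing a real orthonormal basis $\{\phi_j\}_{j\geq 1}$ and eigenvalues $\{\lambda_j\}_{j\geq 1}$. The hypothesis $\int_0^\pi|v|\,dx<1$ is the classical sufficient condition guaranteeing $\lambda_j=j^2+O(1)$ with simple, strictly positive spectrum and well-behaved asymptotics for $\phi_j$ in analytic norms, which in turn yields the required tame estimates for the nonlinear vector field. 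Expanding $u(t,x)=\sum_j q_j(t)\phi_j(x)$ turns (\ref{equ}) into the non--autonomous Hamiltonian system $\mathrm{i}\dot q_j=\lambda_j q_j+\epsilon\,\partial_{\bar q_j}\!F(\omega t,q,\bar q;\xi)$.

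Next I would autonomize the forcing by introducing angles $\theta=\omega t\in\mathbb{T}^{d+2}$ conjugate to dummy actions $y\in\mathbb{R}^{d+2}$, so that the system is generated by
\begin{equation*}
H(\theta,y,q,\bar q;\xi)=\langle \omega,y\rangle+\sum_{j\geq 1}\lambda_j|q_j|^2+\epsilon\,P(\theta,q,\bar q;\xi),
\end{equation*}
with $\omega=\xi\bar\omega$ and $\xi\in\mathcal{O}$ playing the role of the external parameter. Analyticity of $v$ and $f$, combined with the eigenfunction asymptotics, ensures $P$ is real analytic on a complex neighborhood of $\mathbb{T}^{d+2}\times\{0\}$ in a weighted Hilbert space $\ell^{a,\rho}_s$, Lipschitz in $\xi$, and small of order $\epsilon$ in the associated KAM norm. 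At this point the problem has been reduced to the persistence of the invariant torus $\{y=0,\,q=0\}$ of frequency $\omega=\xi\bar\omega$ under the perturbation $\epsilon P$.

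Now I would apply the infinite--dimensional KAM theorem for weak Liouvillean frequencies that underlies the paper. The unperturbed tangential frequency $\omega_0:=\xi\bar\omega$ has the Diophantine substructure (\ref{frequency assumption}) in the $\bar\omega_2$ directions, but only the weak Liouvillean control $\beta(\alpha)<\infty$ along the $(\alpha,1)$ direction, so the standard Newton iteration with uniform Diophantine conditions is unavailable. The strategy is the one introduced by Hou--You for finite-dimensional tori, adapted to the PDE: organise the iteration along the continued fraction scale $\{q_n\}$ of $\alpha$, treat harmonics with $|k_1|\leq q_{n+1}$ as \emph{resonant} at stage $n$, and remove only the non--resonant part, so that each homological equation can be solved with divisor lower bound $\gamma/(|k|+|l|)^\tau$ guaranteed by the second line of (\ref{frequency assumption}). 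The resonant block is kept in the new normal form and absorbed by a unitary coordinate change that is bounded thanks to $\beta(\alpha)<\infty$. Verifying the first and second Melnikov conditions along this scheme requires the spectral asymptotic $\lambda_j=j^2+O(1)$ and an $\xi$-dependent excision argument; the excluded parameter set at step $n$ has measure $O(\gamma\,q_n^{-2})$, summing to the claimed $O(\gamma)$ for $\mathcal O\setminus\mathcal O_\gamma$.

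The main obstacle, as expected, is the compatibility between the Liouvillean small--divisors $|\langle k,\bar\omega_1\rangle|$ and the infinite tower of normal frequencies $\{\lambda_j\}$: the second Melnikov divisors $\langle k,\omega\rangle+\lambda_i\pm\lambda_j$ must be controlled uniformly in $j$ while $\langle k,\bar\omega_1\rangle$ may be almost zero for $|k_1|$ comparable to $q_{n+1}$. The way to push through is to exploit the integer structure of the leading part of $\lambda_i-\lambda_j$ (which is $i^2-j^2\in\mathbb{Z}$) and combine it with the CD--bridge block--diagonalisation so that resonant harmonics only couple finitely many modes at each scale, keeping the perturbation size under control as $n\to\infty$. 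Once the KAM iteration converges on $\mathcal{O}_\gamma$, undoing the coordinate changes and returning to the $(t,x)$ variables yields a $C^\infty$ quasi--periodic solution of (\ref{equ}) with frequency $\omega=\xi\bar\omega$, as asserted.
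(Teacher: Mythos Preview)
Your reduction of Theorem~\ref{thm-appli-1} to the abstract KAM theorem is correct and is exactly what the paper does: diagonalize $-\partial_{xx}+v(x)$, expand $u=\sum z_p\phi_p$, autonomize the forcing by adjoining angle--action pairs, verify from $\int_0^\pi|v|\,dx<1$ that $|\mathbf\Omega_p-p^2|_{\mathcal O}^*<\tfrac12$, check regularity of $X_P$ (Lemma~\ref{per NLS}), and then invoke Theorem~\ref{kam} to obtain the $C^\infty$ embedding $\Phi^\infty$ of the invariant torus; undoing $\Phi^\infty$ gives the quasi-periodic solution.

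Where your sketch drifts from the paper is in the description of the KAM machine itself. You outline a Hou--You style resonant/non-resonant splitting (``treat harmonics with $|k_1|\le q_{n+1}$ as resonant and remove only the non-resonant part, then absorb the resonant block by a unitary change''). The paper does \emph{not} proceed this way. Instead it keeps a $\theta$-dependent piece $B_p^n(\theta)$ of size $\epsilon_0$ in the normal form throughout, so that the homological equations acquire a variable coefficient (cf.\ \eqref{basic homological equation}); these are solved approximately via Proposition~\ref{basic lemma}: one conjugates by $e^{i\mathcal B(\theta)}$ with $\partial_{\bar\omega_1}\mathcal B=\mathcal T_{Q_{n+1}}B$, bounds $\mathrm{Im}\,\mathcal B$ using the CD-bridge choice of $(Q_n)$ and the hypothesis $\beta(\alpha)<\infty$, and then inverts the resulting operator by a diagonal-dominance argument \`a la \cite{KWZY}. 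Each ``big'' step from $H_n$ to $H_{n+1}$ is itself a finite composition of $N\sim\ln Q_{n+1}$ classical KAM substeps (Lemma~\ref{Lem4.2}), which is why the analytic strip in $\theta$ collapses fast and only $C^\infty$ smoothness survives. Also, the divisor bound $\gamma/(|k|+|l|)^\tau$ from the second line of \eqref{frequency assumption} applies only when $l\neq 0$; the full first and second Melnikov conditions \eqref{newstart 2} are obtained by excising parameters $\xi$, not directly from the weak Liouvillean hypothesis. None of this affects the correctness of your argument for Theorem~\ref{thm-appli-1} proper, but your paragraphs~3--4 should be read as motivation rather than as a faithful summary of the paper's iteration.
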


Before giving its proof, let us make some comments on the result.

 We choose  NLS as a model mainly because it is one of the most important equation in mathematical physics,  many questions are still open.  It has been a long time for people  to  construct  quasi-periodic solution of NLS by KAM,  real breakthrough was  recently made by   Elliasson-Kuksin \cite{EK},  who  established quasi-periodic solution of   NLS with $x\in \T^d$. We should mention that the existence of quasi-periodic with Diophantine frequency for NLS in higher dimension was first proved by Bourgain \cite{Bourgain3,Bourgain4} by CWB method.  To  ensure localization properties of the eigenfunctions, $v(x)$  from  the operator $\partial_{xx}+v(x)$ is usually substituted by a \textquotedblleft convolution potential\textquotedblright  (see \cite{EK,GY2,P3}), which will provide parameters required by KAM theorem.  However,  in our result,  the potential $v(x)$ serves as a  multiplicative operator, the result holds for any fixed multiplicative operator $v(x)$,  we do not extract parameters
from $v(x)$, the role of parameter is being played by $\xi$ from $\omega=\xi({\bar\omega_1},\bar\omega_2)$. In fact, the existence of this  kind of solution (quasi-periodic solution with frequency vary in a line) was first proposed by
by Bourgain \cite{Bourgain1} and Eliasson \cite{E2}. In the infinitely dimensional Hamiltonian setting,  this has
been first proved by Geng-Ren \cite{GR} for 1-dimensional wave equation and then Berti-Biasco \cite{BeBi} for $1$-dimension NLS.  Berti-Bolle \cite{BB1}  answered this question for the forced
  NLS like \eqref{equ} with  differentiable nonlinearity and $x\in \T^d$.
Comparing with \cite{BB1}, Berti-Bolle relax the perturbation to be finite differentiable  and forced  by {\it Diophantine} frequency, while  the system we consider is  forced by {\it Liouvillean} frequency and the perturbation is  analytic.


As we should mentioned, our work is also motivated by Avila-Fayad-Krikorian,  Hou-You's recent work \cite{AFK,HoY},  where
they consider rotation  reducibility result  of quasi-periodic $SL(2;\R)$ cocycles with Liouvillean frequency. Note quasi-periodic $SL(2;\R)$ cocycles can be viewed as two dimensional {\it linear} Hamiltonian,  reducibility of  $SL(2;\R)$ cocycles is equivalent to the quasi-periodic solution of the corresponding Hamiltonian systems. Readers can refer \cite{A, FK,ZW} for related results. We just emphasize that  reducibility of quasi-periodic $SL(2,\R)$ cocycles with Liouvillean frequency is quite meaningful, since the dynamics of quasi-periodic $SL(2,\R)$ cocycles are  closely  related to the spectral theory of one-dimensional quasi-periodic  Schr\"odigner operators, the reducibility of $SL(2;\R)$ cocycles with  Liouvillean frequency plays a quite important role in recent advances of spectral theory of  quasi-periodic  Schr\"odinger operators, for example, Avila's global theory of one-frequency quasi-periodic  Schr\"odigner operators \cite{A,Aglobal}, the solution of Aubry-Andr\'{e}-Jitomirskaya's conjecture \cite{AYZ1}.
We further mention that before our work, Wang-You-Zhou \cite{WYZ} already generalized Avila-Fayad-Krikorian's result \cite{AFK} to finite dimensional nonlinear Hamiltonian system, where they obtained response solution of harmonic oscillators, however, as we will discuss in section \ref{homoeq}, the key techniques are quite different compared to this paper.

Finally, let's comment on the innovations of our results.  The proof of the theorem is  based on infinite dimensional KAM theory, it is  well-known that  the key point of KAM theory is the solution of homological equation. The typical homological equation we  meet can be written as
\begin{eqnarray}\label{kuksinlemma}
-{\rm i}\partial_{\omega}u+\zeta u+b(x)u=f(x),\qquad x\in \T^n.
\end{eqnarray}
In fact, this kind of equation was already met when Kuksin \cite{Ku2}  studied KDV equations(also \cite{KPo}),  and also by Liu-Yuan \cite{LiuYuan} when they  studied one-dimensional derivative NLS. We will provide a quite general method for solving this kind of equation, and is believed to
  have further applications. Compared  to \cite{LiuYuan,KPo,Ku2},  the method is totally different and
which even works for Liouvillean frequency (not merely Diophantine frequency as in \cite{LiuYuan,KPo,Ku2}), this is one novelty
 of the paper. Readers are invited to consult section \ref{homoeq} for more discussions.

 We emphasize that in all the results mentioned above \cite{A,Aglobal,AFK,AYZ1,HoY, WYZ,ZW}, the frequency is one frequency (thus two frequencies in the continuous case), however, our method works for multifrequency.
To the best knowledge of the authors, our result  gives  the first result  regarding on the quasi-periodic solutions with Liouvillean frequency for Hamiltonian PDE, and it also gives the  first positive result regarding on multifrequency Liouvillean frequency (even for the linear finite dimensional  Hamiltonian case)! One can not hope our result works for  {\it any} Liouvillean frequency, since in the linear cocycle case, Avila and Jitomirskaya \cite{AJ} already  proved that
 there exists two dimensional frequency, such that for  typical  analytic potential, the corresponding Schr\"odinger cocycle has positive Lyapunov exponent for almost every energies. Thus the corresponding Hamiltonian system doesn't exist  quasi-periodic solution.


\section{Preliminaries}
\subsection{Continued fraction expansion.}
\label{sec:2.1}
Let $\alpha \in \R\backslash \mathbb Q$ be irrational.
We first set $$a_0=0,\alpha_0=\alpha$$
and then we define inductively for $n\geq 1:$
$$a_n=[\alpha_{n-1}^{-1}],\alpha_n=\alpha_{n-1}^{-1}-a_n:=G(\alpha_{n-1})=\{{1\over\alpha_{n-1}}\}.$$
We define
\begin{eqnarray*}
p_0=0, \quad  q_1=a_1, \quad q_0=1,\quad p_1=1
\end{eqnarray*}
and
\begin{eqnarray*}
p_n=a_np_{n-1}+p_{n-2},\quad
q_n=a_nq_{n-1}+q_{n-2}.
\end{eqnarray*}
Then $(q_n)$ is the sequence of denominators of the best rational
approximations for $\alpha\in \R\backslash \mathbb Q$.  To be more precise,   for any $ 1\leq k<q_n$, one has
\begin{equation}\label{lowerbound1} \|k\alpha\|\geq\|q_{n-1}\alpha\|,\quad {1\over q_{n+1}+q_n}\leq \|q_n\alpha\|\leq {1\over q_{n+1}},\end{equation}
where $\|x\|=\inf\limits_{p\in\Z}|x-p|$.

\subsection{CD-Bridge}For any $\alpha\in\R\backslash\mathbb Q$, let $(q_n)$ be the sequence of denominators of best rational approximations.
We choose two particular subsequences of $(q_n)$, the first is  $(q_{n_k}) $ which we denote  by  $(Q_k)$ for simple, the second is  $(q_{n_k+1})$  which we denote by $(\overline Q_k)$. The properties required
from our choice of the subsequence $(Q_k)$ are summarized below.   The definition of CD-Bridge is required.

\begin{definition}{ \cite{AFK}}
Let $0< \mathcal A \leq \mathcal B\leq\mathcal C$, We say that the pair of denominators $(q_\ell,q_n)$ forms a CD-bridge if :
\begin{enumerate}
\item  $q_{i+1}\leq q_i^{\mathcal A},\quad i=\ell,\cdots,n-1$
\item  $ q_\ell^{\mathcal B}\leq q_n \leq q_\ell^{\mathcal C}$.
\end{enumerate}
\end{definition}

\begin{Lemma}\label{CDbridge}{ \cite{AFK}} For any $\mathcal A>0$, there exists a subsequence $(Q_k)$, such that $Q_0=1$ and  for each $k\geq 0$,
$Q_{k+1}\leq {\overline  Q_{k}^{\mathcal A^4}}$, and either ${\overline  Q}_{k}\geq Q_k^{\mathcal A}$ or the pairs $({\overline  Q}_{k-1},Q_k)$ and $(Q_k,Q_{k+1})$ are both $CD(\mathcal A,\mathcal A,\mathcal A^3)$ bridges.
\end{Lemma}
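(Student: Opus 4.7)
The plan is a greedy inductive construction of the subsequence $(n_k)$ (so that $Q_k = q_{n_k}$). Call an index $n$ a \emph{jump} if $q_{n+1}>q_n^{\mathcal A}$; the complement of the jump set partitions $\mathbb N$ into \emph{controlled-growth blocks} inside which $q_{i+1}\le q_i^{\mathcal A}$. Starting with $n_0=0$, given $n_k$ I would set $n_{k+1}$ to be the minimum of (a) the smallest jump index strictly larger than $n_k$, and (b) the smallest index $m>n_k$ with $q_m\ge Q_k^{\mathcal A}$. Rule (a) ensures that consecutive selected indices lie inside a single controlled-growth block, and rule (b) subdivides any block that would otherwise be too long, so that $Q_{k+1}$ sits in $[Q_k^{\mathcal A}, Q_k^{\mathcal A^2}]$ whenever rule (b) is active (since $q_{n_{k+1}-1}<Q_k^{\mathcal A}$ forces $Q_{k+1}\le q_{n_{k+1}-1}^{\mathcal A}<Q_k^{\mathcal A^2}$).

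Once $(n_k)$ is fixed, verification of the two conclusions proceeds by case analysis. The bound $Q_{k+1}\le \overline Q_k^{\mathcal A^4}$ follows from the within-block estimate $Q_{k+1}\le q_{n_{k+1}-1}^{\mathcal A}$ together with the monotonicity $\overline Q_k \ge Q_k$. For the dichotomy, if $\overline Q_k\ge Q_k^{\mathcal A}$ the first alternative holds and nothing more is needed; otherwise one must show both $(\overline Q_{k-1},Q_k)$ and $(Q_k,Q_{k+1})$ are $CD(\mathcal A,\mathcal A,\mathcal A^3)$-bridges. The forward bridge is automatic from rule (b), since inside one controlled block $q_{i+1}\le q_i^{\mathcal A}$ and the greedy choice yields $Q_k^{\mathcal A}\le Q_{k+1}\le Q_k^{\mathcal A^2}\le Q_k^{\mathcal A^3}$.

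The main obstacle is the backward pair $(\overline Q_{k-1},Q_k)$: when the first alternative fails, one must verify that $n_k$ is not a jump index (equivalently, that $n_k$ was produced by rule (b) rather than rule (a)). The hypothesis $\overline Q_k=q_{n_k+1}<Q_k^{\mathcal A}$ says exactly this, so all indices from $n_{k-1}+1$ up to $n_k$ lie in the same controlled-growth block; the growth condition $q_{i+1}\le q_i^{\mathcal A}$ across the backward pair is then immediate, and the size bounds $\overline Q_{k-1}^{\mathcal A}\le Q_k\le \overline Q_{k-1}^{\mathcal A^3}$ follow from how $n_k$ was chosen at step $k-1$ under rule (b) applied with base $\overline Q_{k-1}$. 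The delicate point is coupling the \emph{local} rule used at step $k-1$ with the \emph{global} hypothesis appearing at step $k$; once this case bookkeeping is organized, the three inequalities defining a $CD(\mathcal A,\mathcal A,\mathcal A^3)$-bridge fall out, and the auxiliary inequality $Q_{k+1}\le \overline Q_k^{\mathcal A^4}$ is recovered with ample slack from $\mathcal A^2$ vs $\mathcal A^4$.
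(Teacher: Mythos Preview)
The paper does not prove this lemma; it is quoted from \cite{AFK}. Your outline has the right flavor (greedy selection combining jump detection with growth thresholds), but the rules as stated do not deliver the conclusion.

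The decisive gap is the forward bridge. Suppose $n_k$ is not a jump (so the first alternative fails at step $k$) but the next jump $j>n_k$ satisfies $q_j<Q_k^{\mathcal A}$. Then rule (a) fires before rule (b), giving $n_{k+1}=j$ and $Q_{k+1}=q_j<Q_k^{\mathcal A}$, so the lower bound in the $CD(\mathcal A,\mathcal A,\mathcal A^3)$ definition is violated. Concretely, take $\mathcal A=10$, a jump at index $0$ with $\overline Q_0=q_1=2$, then Fibonacci-type growth up to some $q_j\approx 10^4$, followed by a jump at $j$. Your rule places $n_1=1$; any later $n_k\in\{2,\ldots,j-1\}$ is a non-jump whose forward bridge would require $Q_{k+1}\ge Q_k^{\mathcal A}$, impossible since every $q_i$ in the block is far below $Q_k^{\mathcal A}$. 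The only admissible subsequence here is $n_0=0$, $n_1=j$, skipping the entire slow block, and no purely local greedy rule of your type produces this; one needs to look ahead to the next jump and decide whether the block is long enough to subdivide at all.

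There is a second inconsistency in your backward verification: rule (b) at step $k-1$ uses threshold $Q_{k-1}^{\mathcal A}$, yielding $Q_k\ge Q_{k-1}^{\mathcal A}$, whereas the backward bridge demands $Q_k\ge \overline Q_{k-1}^{\mathcal A}$. When $n_{k-1}$ is a jump your rule forces $n_k=n_{k-1}+1$, making $(\overline Q_{k-1},Q_k)$ degenerate and not a bridge for $\mathcal A>1$. Changing the threshold to $\overline Q_k^{\mathcal A}$ fixes this second point but not the first.
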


In the sequel, we assume $\mathcal A\geq 10$, and $(Q_n)$ is the selected subsequence as in
Lemma \ref{CDbridge}. Note if   $\beta(\alpha)=\limsup_{n>0} \frac{\ln\ln q_{n+1}}{\ln q_n}<\infty$, then  $\widetilde{U}(\alpha):=\sup_{n>0} \frac{\ln\ln q_{n+1}}{\ln q_n}<\infty$. Then we have the following:
\begin{Lemma}{ \cite{KWZY}}\label{sup-liouvillean}
If $\tilde U(\alpha)<\infty$,  then  there is $Q_n\geq Q^{\mathcal A}_{n-1}$ for any $ n\geq 1$. Furthermore, one has
$$\sup\limits_{n>0}{\frac{\ln\ln Q_{n+1}}{\ln Q_n}}\leq U(\alpha),\quad \ln Q_{n+1}\leq {Q_n^U},$$
where $U(\alpha)=\tilde U(\alpha)+4{\ln{\mathcal A}\over\ln 2}$.
\end{Lemma}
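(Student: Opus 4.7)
The plan is to derive both assertions from the dichotomy provided by Lemma~\ref{CDbridge}, combined with the definition of $\tilde U(\alpha)$; all the hard selection work has already been absorbed into Lemma~\ref{CDbridge}, and what remains is to decode its two alternatives at each index and then manage constants.

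First, for the bound $Q_n\geq Q_{n-1}^{\mathcal A}$ I will examine the two alternatives of Lemma~\ref{CDbridge} at index $n-1$. In the first alternative, $\overline Q_{n-1}\geq Q_{n-1}^{\mathcal A}$; since the subsequence indices $n_k$ are strictly increasing, $Q_n=q_{n_n}\geq q_{n_{n-1}+1}=\overline Q_{n-1}$ and the claim follows. In the second alternative, $(Q_{n-1},Q_n)$ is a $CD(\mathcal A,\mathcal A,\mathcal A^3)$-bridge, and the parameter $\mathcal B=\mathcal A$ in the CD-bridge definition gives $Q_n\geq Q_{n-1}^{\mathcal A}$ at once.

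For the ratio bound $\ln\ln Q_{n+1}\leq U(\alpha)\ln Q_n$, I will split into the same dichotomy at index $n$. In the first case $\overline Q_n\geq Q_n^{\mathcal A}$, I combine $Q_{n+1}\leq \overline Q_n^{\mathcal A^4}$ from Lemma~\ref{CDbridge} with the observation $\ln\ln \overline Q_n\leq \tilde U(\alpha)\ln Q_n$, which is nothing but the definition of $\tilde U(\alpha)$ applied with $m=n_n$, to obtain $\ln\ln Q_{n+1}\leq 4\ln\mathcal A+\tilde U(\alpha)\ln Q_n$. In the second case $(Q_n,Q_{n+1})$ is a $CD(\mathcal A,\mathcal A,\mathcal A^3)$-bridge, giving $Q_{n+1}\leq Q_n^{\mathcal A^3}$ and hence $\ln\ln Q_{n+1}\leq 3\ln\mathcal A+\ln\ln Q_n$; the trivial estimate $\ln\ln Q_n\leq \ln Q_n$ reduces this to the same form. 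In either case the residual additive $O(\ln\mathcal A)$ constant can be absorbed via $4\ln\mathcal A\leq (4\ln\mathcal A/\ln 2)\ln Q_n$, valid as soon as $\ln Q_n\geq \ln 2$, and this is precisely where the correction term $4\ln\mathcal A/\ln 2$ in the definition of $U(\alpha)$ appears. The required lower bound $Q_n\geq 2$ is secured by the growth $Q_n\geq Q_{n-1}^{\mathcal A}$ just established, together with $\mathcal A\geq 10$.

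The remaining assertion $\ln Q_{n+1}\leq Q_n^{U(\alpha)}$ will follow by exponentiating the ratio bound. The main obstacle is really just the uniform bookkeeping of constants across the two cases and the verification that $Q_n$ is large enough for the constant absorption to take effect; no further number-theoretic input beyond Lemma~\ref{CDbridge} and the definition of $\tilde U(\alpha)$ is needed.
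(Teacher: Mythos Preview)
The paper does not supply its own proof of this lemma; it is quoted from \cite{KWZY} and used as a black box. Your proposal is essentially a correct reconstruction: the first assertion follows directly from the dichotomy of Lemma~\ref{CDbridge} applied at index $n-1$ (in the first alternative $Q_n\geq\overline Q_{n-1}\geq Q_{n-1}^{\mathcal A}$, in the second the CD-bridge lower bound $\mathcal B=\mathcal A$ gives it), and the ratio bound follows from the same dichotomy at index $n$ combined with $Q_{n+1}\leq\overline Q_n^{\mathcal A^4}$ and the definition of $\tilde U(\alpha)$, with the additive $4\ln\mathcal A$ absorbed into the multiplicative correction $4\ln\mathcal A/\ln 2$ once $\ln Q_n\geq\ln 2$.

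One small wrinkle: your last sentence claims that $Q_n\geq 2$ follows from $Q_n\geq Q_{n-1}^{\mathcal A}$ and $\mathcal A\geq 10$, but since $Q_0=1$ this recursion alone only yields $Q_n\geq 1$. The bound $Q_n\geq 2$ for $n\geq 1$ is not automatic from what you wrote; it comes from the fact that the selected index $n_1\geq 1$ and one can always arrange (or reindex so that) $q_{n_1}\geq 2$, since $q_m\geq 2$ for $m\geq 2$. This is a harmless bookkeeping point rather than a genuine gap, but the justification you gave for it is not the right one.
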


\section{An Infinite Dimensional KAM Theorem}

The main result will be proved by a generalized KAM theorem for Liouvillean frequency. In this section, we introduce this basic KAM result.

We start by introducing the notations.  The Lipschitz norm of  a function $f(\xi)$ with $\xi\in\mathcal O\subset\R$  is defined as   $$|f(\xi)|_{\mathcal O}^*= |f(\xi)|_{\mathcal O} +|f(\xi)|_{\mathcal O}^{\mathcal L},$$ where
$|f(\xi)|_{\mathcal O}= \sup\limits_{\xi\in \mathcal O}|f(\xi)|, |f(\xi)|_{\mathcal O}^{\mathcal L}=\sup\limits_{\xi,\eta\in \mathcal O, \xi\neq\eta}{|f(\xi)-f(\eta)|\over |\xi-\eta|}.$
Let $D_r =\{(\theta,\varphi)\in \T^2\times \T^d, |\Im \theta|+|\Im \varphi|<r\}$. For a bounded holomorphic (possibly with parameter) function $g(\theta,\varphi;\xi)=\sum\limits_{(k,l)\in \Z^2\times \Z^d}\hat g_{(k,l)}(\xi)e^{\rm i\la (k,l),(\theta,\varphi)\ra}$ on $D_r$, we let
$$|g|_{r,\mathcal O}^*=\sum_{|k|+|l|\in\Z^{2+d}} |\hat g_{(k,l)}(\xi)|_{\mathcal O}^*e^{(|k|+|l|)r}.$$
We denote by $\mathscr B_r(\mathcal O)$ the set of these functions, and
 for any $K\in \Z^{+},$ we define the  truncation operator $\mathcal T_K$  as
 \begin{equation}\mathcal T_K g(\theta,\varphi;\xi)=\sum\limits_{ |k|+|l|<K}\hat g_{(k,l)}(\xi)e^{\rm i\la (k,l),(\theta,\varphi)\ra},\end{equation}
also denote \begin{equation}\label{average}[g(\theta,\varphi;\xi)]_{\varphi}=\int_{\T^d}g(\theta,\varphi)d\varphi,\,[g(\theta,\varphi;\xi)]=\int_{\T^2\times \T^d}g(\theta,\varphi)d\theta d\varphi.\end{equation}

Let $\ell_{\C}^{a,\rho}$  be the Hilbert space of sequence $z=(z_1,z_2,\cdots)$ with
$$|z|_{a,\rho}^2 =\sum_{{p\geq 1}}|z_p|^2 p^{2\rho} e^{2a|p|}<\infty,$$
where $a>0$ and $\rho>0$. For  $r,s>0$, we then introduce the complex neighborhoods of  $ {\T^{2+d}}\times \{0,0,0\}$
by \begin{eqnarray*}D(r,s)&=&\{({\Theta},\mathcal I,z,\bar z):|{\rm Im} \Theta|<r, |\mathcal I|<s^2, {|z|}_{a,\rho}<s,
{|\bar {z}|}_{a,\rho}<s\}\\
&\subseteq& {\C^{2+d}}\times \C^{2+d}\times\ell_{\C}^{a,\rho}\times\ell_{\C}^{a,\rho}:=\mathscr{P}^{a,\rho}_{\C},\end{eqnarray*} where $\T^{2+d}$ is the usual ${2+d}$-torus, $|\cdot|$ denotes the sup-norm of complex vectors for
\begin{equation}\Theta=(\theta,\varphi)\in \T^2\times \T^d,\mathcal I=(I,J)\in \R^2\times \R^d.\end{equation}
 For any  $W=(X,Y,U,V)\in\mathscr{P}^{a,\rho}_{\C}$, the  weighted phase norm is defined to be
\begin{equation}\label{norm}|W|_{s}=:|W|_{s,a,\rho}=|X|+{1\over s^2}|Y|+{1\over s}|U|_{a,\rho}+{1\over s}|V|_{a,\rho}.\end{equation}
For any map ${\mathcal W}:D(r,s)\times \mathcal O\rightarrow  \mathscr{P}^{a,\rho}_{\C}$,  we define its norm as
$$|\mathcal W|_{s,D(r,s)\times \mathcal O}=\sup\limits_{D(r,s)\times \mathcal O}|\mathcal W|_{s},$$
$$|\mathcal W|_{s,D(r,s)\times \mathcal O}^{\mathcal L}=\sup\limits_{\xi,\eta\in \mathcal O, \xi\neq\eta}{|\triangle_{\xi\eta} \mathcal W|_{s,D(r,s)\times \mathcal O}\over |\xi-\eta|},$$
where $\triangle_{\xi\eta} {\mathcal W}={\mathcal W}(\cdot,\xi)-{\mathcal W}(\cdot,\eta)$ and the supremum is taken over $\mathcal O$.

We also need  the operator norm  $\pmb{\pmb |} \cdot\pmb{\pmb |}_{s,\tilde s}$ below,
$$\pmb{\pmb |}A\pmb{\pmb |}_{s,\tilde s}=\sup\limits_{W\neq0}{|AW|_{s}\over|W|_{\tilde s}},$$
where $|\cdot|_s$ is the shorten of $|\cdot|_{s,a,\rho}$ defined in \eqref{norm}, and $|\cdot|_{\tilde s}$ defined similarly. For $s\geq \tilde s$, these norms satisfy $\pmb{\pmb |}AB\pmb{\pmb |}_{s,\tilde s}\leq \pmb{\pmb |}A\pmb{\pmb |}_{s,s}\cdot\pmb{\pmb |}B\pmb{\pmb |}_{\tilde s,\tilde s} $ since $|W|_s\leq |W|_{\tilde s}$.

If the function $F$ is analytic in space coordinate, we usually take Taylor--Fourier series as:
\beq\label{2.2}
 F(\Theta, \mathcal I, z, \bar z;\xi )=\sum_{\iota,\mu,\alpha,\beta }
F_{\iota \mu \alpha\beta }(\xi)\mathcal I^\iota e^{{\rm i}
 \la \mu,\Theta\ra}z^{\alpha} \bar z^{\beta },\eeq
 where the coefficient functions $F_{\iota \mu \alpha\beta }(\xi)$ are Lipschitz on $\xi$, the vectors $\alpha\equiv (\cdots,\alpha_n,\cdots)_{n\geq 1}$, $\beta \equiv
(\cdots, \beta _n, \cdots)_{n\geq 1}$  have finitely many non-zero  components $\alpha_n,\beta_n\in \N$, $z^{\alpha} \bar z^{\beta }$ denotes $\prod_n
z_n^{\alpha_n}\bar z_n^{\beta_n}$ and finally $\la\cdot,\cdot\ra$ is the standard inner product in $\C^d$.

In this paper, we will consider the perturbed Hamiltonian on $ D(r,s) \times  \mathcal O$,
\begin{equation}\label{ham}H=\la\omega_1,I\ra+\la\omega_2,J\ra+\sum_{p\geq 1}\mathbf \Omega_p(\xi) |z_p|^2+P(\theta,\varphi,z,\bar z;\xi).\end{equation}
endowed with  the symplectic structure
 $$dI\wedge d\theta+d J\wedge d\varphi+i\sum_{p\geq 1}dz_p\wedge d\bar z_p.$$
 The perturbation $P(\theta,\varphi,z,\bar z;\xi)$ is real analytic in space coordinates $\theta,\varphi,z,\bar z$ and Lipschitz in parameters $\xi$.  For each $\xi\in \mathcal O$, the Hamiltonian  vector field $X_P=(-P_{(\theta,\varphi)},0,iP_z,-iP_{\bar z})$ defines  a real analytic map $X_P:\mathscr{P}^{a,\rho}_{\C}\rightarrow \mathscr{P}^{a,\rho}_{\C}$ near $\T^{2+d}\times \{0,0,0\}$. We denote the weighted norm of  $X_P$ to be
 $$|X_P|^*_{s,D(r,s)\times \mathcal O}=|X_P|^{\mathcal L}_{s,D(r,s)\times \mathcal O}+|X_P|_{s,D(r,s)\times \mathcal O}.$$  Then we have the following infinite dimensional KAM theorem:
\begin{Theorem}\label{kam} For any given $\beta<\infty,\tau>d+6,s>0,r>0,\gamma>0$,   let  $\omega=\xi \bar{\omega}$ where $\bar{\omega} \in WL(\gamma,\tau,\beta)$,   $\xi\in\mathcal O=({1\over2},{3\over2})$.
Suppose the  Hamiltonian $\eqref{ham}$ satisfy
 $$\sup_{p\geq1}|\mathbf\Omega_p-p^2|^*_{\mathcal O}<{1\over 2}.$$
\noindent Then  there exists $\epsilon_0(\tau,\beta,\gamma,s,r)>0$,  such that for any real analytic perturbation $P(\theta,\varphi,z,\bar z;\xi)$  with $$\epsilon=|X_P|^{*}_{s,D(r,s)\times \mathcal O}\leq\epsilon_0,$$
  there exists a Cantor set $\mathcal O_\gamma$ of  $\mathcal O$ with ${\rm
 meas}(\mathcal O\setminus \mathcal O_\gamma)=O(\gamma)$ and a Lipschitz family  symplectic  map $\Phi:\T^{2+d}\times \mathcal O_
 \gamma\rightarrow \mathscr{P}^{a,\rho}_{\C}$ which is  $C^\infty$ smooth in $\theta,\varphi$, such that
 $\eqref{ham}$  is  transformed to
 $$H^*=e^*(\theta;\xi)+\la\omega_1,I\ra+\la\omega_2,J\ra+\sum\limits_{p\geq1} (\mathbf\Omega_p^*(\xi)+B^*_p(\theta;\xi)) |z_j|^2+P^*(\theta,\varphi,z,\bar z;\xi),$$
where   $P^*(\theta,\varphi,z,\bar z;\xi)=\sum\limits_{|\alpha+\beta|\geq 3}P^*_{\alpha,\beta}(\theta,\varphi;\xi)z^\alpha\bar z^{\beta}.$
\end{Theorem}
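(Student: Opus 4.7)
The plan is to run a KAM iteration adapted to the Liouvillean frequency $\bar\omega_1=(\alpha,1)$. Write $H_0=N_0+P$ with $N_0=\la\omega_1,I\ra+\la\omega_2,J\ra+\sum_p\mathbf\Omega_p(\xi)|z_p|^2$, and at the $n$-th step seek a symplectic $\Phi_n=\exp(X_{F_n})$ so that $H_{n+1}=H_n\circ\Phi_n$ has new perturbation of size $\sim\eps_n^{3/2}$. Unlike the Diophantine case, the normal part is \emph{allowed} to pick up $\theta$-dependence: at each step the energy acquires a term $e_n(\theta;\xi)$ and each normal frequency a term $B_p^{(n)}(\theta;\xi)$, both of which converge to the $e^*$ and $B_p^*$ in the statement. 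Choose decreasing sequences $r_n\downarrow r/2$, $s_n\downarrow 0$ and truncation levels $K_n\uparrow\infty$ matched to the CD-bridge denominators $(Q_k)$ of $\alpha$ from Lemma \ref{CDbridge}; this matching is what makes progress in the Liouvillean direction possible.

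The generator $F_n$ is found by solving a homological equation that splits, according to the multi-index $(\mu,\alpha,\beta)$ in the Taylor--Fourier expansion \eqref{2.2} of $\mathcal T_{K_n}P_n$, into four pieces: (i) pure-$\Theta$ terms, shifting $e_n$; (ii) $I,J$-linear terms, shifting the internal frequencies; (iii) off-diagonal $z_pz_q$ and $z_p\bar z_q$ with $p\ne q$; (iv) diagonal $|z_p|^2$ terms. For (iii), the divisor $\la k,\omega_1\ra+\la l,\omega_2\ra\pm\mathbf\Omega_p\pm\mathbf\Omega_q$ is controlled by the mixed Diophantine condition in \eqref{frequency assumption} when $l\ne 0$, and by the asymptotic $\mathbf\Omega_p\sim p^2$ when $l=0$; this is the standard part.

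The main obstacle is (iv), the Kuksin-type homological equation \eqref{kuksinlemma}: for each $p$ one must solve
\begin{equation*}
-\mathrm{i}\partial_{\omega}u_p(\Theta)+\bigl(\mathbf\Omega_p+B_p^{(n)}(\theta)\bigr)u_p=f_p(\Theta),\qquad \Theta=(\theta,\varphi),
\end{equation*}
where the $\theta$-direction carries the Liouvillean frequency $\omega_1=\xi\bar\omega_1$. The strategy, inspired by \cite{AFK,HoY}, is \emph{not} to invert those $l=0$ modes whose divisor $\la k,\omega_1\ra+\mathbf\Omega_p$ is too small; instead these resonant modes are absorbed into the new $B_p^{(n+1)}$. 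Using Lemma \ref{CDbridge}, partition $k\in\Z^2$ into blocks defined by $(Q_j)$: within the block $Q_j\le|k|<Q_{j+1}$ the best rational approximation yields an effective Diophantine bound at scale $Q_j$ with small-divisor loss $\sim\ln Q_{j+1}\le Q_j^{U(\alpha)}$ (Lemma \ref{sup-liouvillean}), which is dominated by the analyticity gain $e^{-|k|(r_n-r_{n+1})}$ provided $K_n$ is chosen to keep the truncation inside a single bridge. The hypothesis $\beta(\alpha)<\infty$ is used exactly here; the $\theta$-dependent coefficient $B_p^{(n)}$ is small of order $\eps_n$ and is handled by Neumann series after reducing the constant part.

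The remaining steps are routine. At level $n$ the set of parameters $\xi$ failing either the $l\ne0$ Diophantine inequality or the $l=0$ resonance-absorption inequality up to level $K_n$ has measure $\lesssim\gamma K_n^{-1}$, and summation yields total excluded measure $O(\gamma)$. The compositions $\Phi_1\circ\cdots\circ\Phi_n$ converge in the weighted norm $|\cdot|^*_{s,D(r,s)\times\mathcal O}$ on the resulting Cantor set $\mathcal O_\gamma$; because the limiting objects $e^*$, $B_p^*$, $\Phi$ are defined only on $\mathcal O_\gamma$, one obtains $C^\infty$ (rather than analytic) smoothness in $(\theta,\varphi)$ via a Whitney-type extension.
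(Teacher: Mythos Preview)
Your outline captures the broad shape of the argument, but there is a genuine gap at the heart of the homological step, and several consequences of that gap are visible elsewhere in your sketch.

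\medskip
\textbf{The size of $B_p^{(n)}$.} You write that ``the $\theta$-dependent coefficient $B_p^{(n)}$ is small of order $\eps_n$ and is handled by Neumann series after reducing the constant part.'' This is the crux, and it is not correct. The terms $B_p^{(n)}(\theta;\xi)$ are the accumulation of the $\varphi$-averaged diagonal corrections from \emph{all} previous steps; their size stays at the level of $\eps_0$ (in the paper's notation $|B_p^n|\le\mathcal E_n\le 2\eps_0^{1/2}$), not $\eps_n$. So the operator you want to invert is $\partial_\omega+i(\zeta+B(\theta))$ with a fixed-size $\theta$-dependent perturbation $B$, against small divisors of size $\gamma K_n^{-\tau}$ which are going to zero. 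A Neumann series in $B$ diverges once $K_n$ is large.

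\medskip
\textbf{How the paper actually handles this.} The paper's Proposition \ref{basic lemma} first removes the low-frequency part $\mathcal T_{Q_{n+1}}B$ by conjugating with $e^{i\mathcal B(\theta)}$, where $\mathcal B$ solves $\la\partial_\theta\mathcal B,\bar\omega_1\ra=\mathcal T_{Q_{n+1}}B$. Because $\alpha$ is Liouvillean, the divisors $\la k,\bar\omega_1\ra$ for $|k|<Q_{n+1}$ can be as bad as $1/(2Q_{n+1})$, so $\mathcal B$ blows up unless one shrinks the $\theta$-analyticity strip drastically: one needs $r_n\sim r_0/Q_n^4\to 0$ (this is condition (2) in Proposition \ref{basic lemma}, and is the content of Remark~5.2). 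After this conjugation the remaining $\theta$-dependent coefficient is $(I-\mathcal T_{Q_{n+1}})B+b$, which \emph{is} tiny (size $\eps_n$ by condition (1)), and only then does the diagonally-dominant/Neumann argument close.

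\medskip
\textbf{Consequences for your iteration parameters.} Because $r_n\to 0$ (not $r_n\downarrow r/2$ as you wrote), a single sequence of steps with $\eps_{n+1}\sim\eps_n^{3/2}$ does not survive: the loss from the shrinking strip would overwhelm the gain. The paper therefore runs a \emph{two-level} scheme: at the $n$-th outer step one performs $N\sim 2^{n+1}c\tau U\ln Q_{n+1}$ inner KAM steps (Lemma \ref{Lem4.2}) during which $B_p$ is held fixed, producing $\eps_{n+1}=\eps_n\cdot Q_{n+1}^{-2^{n+1}c\tau U}$ rather than $\eps_n^{3/2}$. This super-fast decay is what beats the strip loss.

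\medskip
\textbf{Two smaller points.} First, the Kuksin-type equation \eqref{kuksinlemma} arises for the linear and \emph{off-diagonal} quadratic terms (your (ii)--(iii)), where $\zeta=\tilde{\mathbf\Omega}_p$ or $\tilde{\mathbf\Omega}_p\pm\tilde{\mathbf\Omega}_q$; for the diagonal $|z_p|^2$ part the $B$-terms cancel and one gets the trivial equation $\partial_{(\omega_1,\omega_2)}F^{11}_{pp}=R^{11}_{pp}-[R^{11}_{pp}]_\varphi$. Second, the $C^\infty$ regularity in $(\theta,\varphi)$ is a consequence of $r_n\to 0$ (see the Cauchy-estimate argument $Q_{n+1}^{4|b|}\eps_n^{1/2}\le\eps_n^{1/4}$ in Section~4.1), not of Whitney extension in the parameter $\xi$; the $\xi$-dependence is merely Lipschitz throughout.
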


\begin{Remark}
We emphasize that the perturbation is independent of the  action variable $I$ and $J$, this fact is crucial for our results.
\end{Remark}
\subsection{Main ideas of the proof}

Theorem\textbf{ \ref{kam}} is proved by modified  KAM  theory which involves an infinite sequence of change of variables. The philosophy of KAM theory is to construct a series of symplectic transformation which makes the perturbation smaller and smaller at the cost of excluding a small set of parameters.  
Compared to the classical KAM scheme, due to the Liouvillean property of $\bar{\omega}_1$ by condition \eqref{wl}, some $\theta$ dependent terms have to be preserved as a normal form  under KAM iteration.  Thus we  have a generalized  Hamiltonian 
\begin{equation}\label{3.7}
H_{n}=e(\theta;\xi)+\la\omega_1,I\ra+\la\omega_2,J\ra+\sum\limits_{p\geq1} (\mathbf\Omega_p^
 n(\xi)+B_p^n(\theta;\xi)) |z_p|^2+P_n(\theta,\varphi,z,\bar z;\xi).
 \end{equation}
 where $B_p^n(\theta;\xi)$ is of size $\epsilon_0$, and the perturbation 
$P_n(\theta,\varphi,z,\bar z;\xi)$ is of size $\epsilon_n$.  In the following, we will construct a symplectic transformation $\Phi_{n+1}$ which is close to the identity (Proposition \ref{KAM iteration}), such that  $\Phi_{n+1}$ transform $(\ref{3.7})$ to 
\begin{eqnarray*}
 H_{n+1} &=& e(\theta;\xi)+\la\omega_1,I\ra+\la\omega_2,J\ra\\ &+& \sum\limits_{p\geq1} (\mathbf\Omega_p^
 {n+1}(\xi) +B_p^{n+1}(\theta;\xi)) |z_p|^2+P_{n+1}(\theta,\varphi,z,\bar z;\xi),
 \end{eqnarray*}
where $B_p^{n+1}(\theta;\xi)$ is still  of size $\epsilon_0$, and the perturbation 
$P_{n+1}(\theta,\varphi,z,\bar z;\xi)$ is of size $\epsilon_{n+1}$.  However, compared to classical KAM iteration, $\epsilon_n$ shrinks to $0$ much faster (other than $\epsilon_{n+1}=\epsilon_n^{3/2}$), and Proposition  \ref{KAM iteration} is proved with finite KAM iteration steps. The reason is the following:   to  eliminate the effect taken by  $B_p^n(\theta;\xi)$, when one solves the homological equation (Proposition \ref{basic lemma}), one has to shrink the analytical strip of $\theta$ very quickly (that's reason why we can only  obtain $C^\infty$ soluton), as a consequence, $\epsilon_n$ has to shrink much faster otherwise the homological equation doesn't admit any analytical solution. Finally,  finite KAM iteration steps are needed to ensure the fast decay of $\epsilon_n$.

\subsection{The infinite induction}

To begin with  iteration, we first fix   $ \epsilon_0, r_0>0,s_0>0, \tau>d+6,\mathcal A>10$ and $\alpha\in\R\setminus\mathbb Q$ with $U(\alpha)<\infty$, let  $(Q_n)$  be the selected subsequence as in
Lemma \ref{CDbridge}. We then  define  the iteration sequences for $n\geq 1$:
\begin{equation}\label{infinite iteration sequence}
\begin{array}{ll}
s_{n}= \epsilon_{n-1}^{({4\over3})^{2+[{2^{n+1}c{\tau}U\ln Q_{n+1}\over 6\tau+9}]}-1}\cdot s_{n-1},& r_{n}={r_0\over4Q^{4}_{n}},\\
\epsilon_{n}=\epsilon_{n-1}\cdot Q_{n+1}^{-2^{n+1}c\tau U},&\mathcal E_n=\sum\limits_{m=0}^{n-1}\epsilon_m^{1\over2},\\
\gamma_n=\gamma_0-3\sum\limits_{m=0}^{n-1}{\epsilon_m^{1\over2}},&K_n=r_0^{-1}40Q_{n+1}^{4}\ln\epsilon_n,\\
D_n=D( r_n, s_n).&
\end{array}
\end{equation}
where $c$ is a  global constant with $c > {18\tau+27\over 2\tau U}$.

 For convenience, for $r,s,M_1,M_2,m>0$ and the parameter set $\mathcal O$, we define the space $\mathcal F_{r,s,\mathcal O}(M_1, M_2,m)$ to be the functions
\begin{eqnarray*}e(\theta;\xi)+\sum\limits_{p=1}^\infty (\it\Omega_p(\xi)+B_p(\theta;\xi)) |z_p|^2+P(\theta,\varphi,z,\bar z;\xi)\end{eqnarray*}
which satisfy
\begin{equation*}
\left\{
\begin{array}{ll}
|e(\theta;\xi)|^*_{r,{\mathcal  O}}\leq {M}_1, &|{\it\Omega}_p(\xi)|^*_{\mathcal O}\leq {1\over2}+M_1, \\
 |B_p(\theta;\xi)|^*_{r,\mathcal O}\leq M_2,&|X_{P}|_{s,D(r,s)\times \mathcal O}^*\leq  m.
 \end{array}\right.
\end{equation*}

Now we have the following result:

\begin{Proposition}\label{KAM iteration}
Suppose that  $\epsilon_0$ is small enough so that
\begin{eqnarray}\label{parametersetting}\epsilon_0 &\leq& \min\{\frac{(r_0s_0\gamma_0)^{12\tau+36}}{Q_1^{2c\tau U}}, e^{-2c\tau U}\},\\
  \ln\epsilon_0^{-1}&\leq&\epsilon_0^{-{1\over12\tau+18}}.\end{eqnarray}
Then the following holds for all $n>0$:
 Let
\begin{equation}\label{hamilton}H_{n}=e_n(\theta;\xi)+\la\omega_1,I\ra+\la\omega_2,J\ra+\sum\limits_{p\geq1}(\mathbf\Omega_p^
 n(\xi)+B_p^n(\theta;\xi)) |z_p|^2+P_n(\theta,\varphi,z,\bar z;\xi)\end{equation}
 which satisfy
 \begin{enumerate}
 \item For  parameter $\xi\in \mathcal O_n$, $(k,l)\in\Z^2\times\Z^d$ with $|k|+|l|\leq K_{n}$ and $p,q\geq 1$ there is
\begin{eqnarray}\label{newstart 2}
&&|\la k,\omega_1\ra+\la l,\omega_2\ra\pm{\mathbf\Omega}^n_p|\geq  {\gamma_n\over (|k|+|l|+1)^\tau},\nonumber\\
&&|\la k,\omega_1\ra+\la l,\omega_2\ra\pm({\mathbf\Omega}^n_p+{\mathbf\Omega}^n_q)|\geq  {\gamma_n\over (|k|+|l|+1)^\tau},\\
&&|\la k,\omega_1\ra+\la l,\omega_2\ra\pm({\mathbf\Omega}^n_p-{\mathbf\Omega}^n_q)|\geq  {\gamma_n\over (|k|+|l|+1)^\tau},\,\,|l|+|p-q| \neq 0;\nonumber
\end{eqnarray}
 \item  The functions $e_n(\theta;\xi),B^n_p(\theta;\xi)\in \mathscr B_{r_n}(\mathcal O_n)$ with $[B^n_{p}(\theta;\xi)]=0$.
\item The functions  $\it\Omega_p^n=\mathbf\Omega_p^ n(\xi)-|p|^2$ with $$e_n+\sum\limits_{p\geq1} (\it\Omega_p^ n+B_p^n) |z_p|^2+P_n\in\mathcal F_{r_n,s_n,\mathcal O_n}(\mathcal E_n,\mathcal E_n,\epsilon_n).$$
\end{enumerate}
Then there exists a real analytic symplectic transformation $$\Phi_{n+1}:D(r_{n+1},s_{n+1})\times\mathcal O_{n+1}\rightarrow D(r_n,s_n)$$
with \begin{equation}\label{measure}meas (\mathcal O_n\backslash \mathcal O_{n+1})\leq {C\gamma_{n+1}\over K_{n+1}}\end{equation}
and
\begin{eqnarray}\label{phi-1}
&&|\Phi_{n+1}-id|_{s_{n+1},D_{n+1}\times \mathcal O_{n+1}}^* \leq \epsilon_n^{\frac{1}{2}},\\
 &&\label{phi-2}\pmb{\pmb |}D(\Phi_{n+1}-id)\pmb{\pmb |}_{s_{n+1},s_{n+1},D_{n+1}\times \mathcal O_{n+1}}^{*}
\leq \epsilon_n^{\frac{1}{2}},\end{eqnarray}
 such that
$H_{n+1}=H_{n}\circ\Phi_{n+1}$ satisfies  the assumptions of $ H_n$  with $n+1$ in place of $n$.
\end{Proposition}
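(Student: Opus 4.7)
The plan is to prove the proposition via a finite cascade of inner KAM steps rather than a single classical step, because the target ratio $\epsilon_{n+1}/\epsilon_n=Q_{n+1}^{-2^{n+1}c\tau U}$ can be much smaller than the standard $\epsilon_n^{1/2}$ gain of one classical step when $Q_{n+1}$ is Liouvillean--large. I would introduce an inner index $j=0,\dots,N_n$ with $N_n\sim\bigl[\tfrac{2^{n+1}c\tau U\ln Q_{n+1}}{6\tau+9}\bigr]+2$ so that iterating $\epsilon^{(j+1)}\approx(\epsilon^{(j)})^{4/3}$ starting from $\epsilon_n$ lands at $\epsilon_{n+1}$, with a matching shrinkage of an intermediate $s^{(j)}$ that reproduces the factor $\epsilon_{n-1}^{(4/3)^{2+[\cdots]}-1}$ in \eqref{infinite iteration sequence}. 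The $\theta$-strip is kept above $r_{n+1}=r_0/4Q_{n+1}^4$ throughout, with most of the loss taken in the final inner step, and the overall map is the composition $\Phi_{n+1}=\Phi^{(1)}_n\circ\cdots\circ\Phi^{(N_n)}_n$. The diagonal $z_p\bar z_p$ terms of $P_n$ that we deliberately refuse to kill (precisely because they produce the Liouvillean-type equation discussed below) are absorbed into the normal form: the $(\theta,\varphi)$-average updates $\mathbf\Omega_p^n\to\mathbf\Omega_p^{n+1}$, and the $\theta$-dependent residue of size $\leq\epsilon_n^{1/2}$ updates $B_p^n(\theta)\to B_p^{n+1}(\theta)$, which is how the bound $\mathcal E_{n+1}=\mathcal E_n+\epsilon_n^{1/2}$ in hypothesis (3) propagates.

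At each inner step I Taylor--Fourier expand $P$ in $(\Theta,z,\bar z)$, truncate at Fourier order $K_{n+1}=40Q_{n+1}^4\ln\epsilon_n^{-1}/r_0$ so that the high-mode tail is exponentially small, and normalize only the part of polynomial degree $\leq 2$ in $(z,\bar z)$; the degree $\geq 3$ part passes directly into the new perturbation. By the Remark, $P_n$ has no $(I,J)$ dependence, so no secular correction to $(\omega_1,\omega_2)$ is needed. The constant, linear, off-diagonal quadratic ($z_pz_q,\bar z_p\bar z_q,z_p\bar z_q$ with $p\neq q$), and non-resonant diagonal quadratic pieces are all killed by solving an equation of the form
\[
-{\rm i}(\omega_1\partial_\theta+\omega_2\partial_\varphi)u+\zeta u+b(\theta)u=g(\theta,\varphi),
\]
with $\zeta$ a combination of eigenvalues $\mathbf\Omega_p^n$ and $b(\theta)$ a combination of $B_p^n(\theta)$. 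Given such $F$, the symplectic map is $\phi^1_{X_F}$; the estimates \eqref{phi-1}--\eqref{phi-2} follow from standard vector-field bounds on $X_F$, and the new perturbation is quadratic in $X_F$, hence of size $\lesssim(\epsilon^{(j)})^{4/3}$ after Cauchy losses from the small $r^{(j)}\to r^{(j+1)}$ contraction.

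The hard step is this homological equation in the Liouvillean regime, which is exactly equation \eqref{kuksinlemma} from the introduction. I would invoke the paper's general machinery (its ``Proposition \ref{basic lemma}''), whose point is that the Liouvillean direction $\bar\omega_1=(\alpha,1)$ can be tamed using the CD-bridge structure of Lemma \ref{CDbridge} and the bound $\ln Q_{n+1}\leq Q_n^{U}$ of Lemma \ref{sup-liouvillean}: for modes with $|k|\leq Q_{n+1}$ good denominators are available directly, while beyond $Q_{n+1}$ one trades the tiny divisor for a loss of $\theta$-analyticity bounded by $Q_{n+1}^4$. This is precisely what forces $r_{n+1}=r_0/4Q_{n+1}^4$, what confines the final conjugation to $C^\infty$ rather than analytic, and what calibrates the exponent $2^{n+1}c\tau U$ in $\epsilon_{n+1}$ so that $\epsilon_n^{1/2}$ still dominates the polynomial losses $(r_n-r_{n+1})^{-\tau-c}$ and $s_{n+1}^{-2}$ from the vector-field estimates. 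The measure estimate \eqref{measure} is then routine: for each resonance in \eqref{newstart 2} at $|k|+|l|\leq K_{n+1}$, and for the finitely many $p$ with $p^2\lesssim K_{n+1}$ (larger $p$ are non-resonant since $\mathbf\Omega_p^n\sim p^2$), one excludes a set of measure $\lesssim\gamma_{n+1}/(|k|+|l|+1)^\tau$; summing over $|k|+|l|\leq K_{n+1}$ and using $\tau>d+6$ gives the bound $C\gamma_{n+1}/K_{n+1}$.
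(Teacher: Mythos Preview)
Your proposal is correct and follows the paper's approach: a finite inner cascade of $N=2+[\tfrac{2^{n+1}c\tau U\ln Q_{n+1}}{6\tau+9}]$ standard KAM steps (Lemma~\ref{Lem4.2}), each gaining $\tilde\epsilon_{j+1}=\tilde\epsilon_j^{4/3}$, with the homological equations solved via Proposition~\ref{basic lemma}, and $\Phi_{n+1}$ the composition.

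Two technical points to tighten. First, the inner scheme starts at $\tilde r_0=2r_{n+1}$, \emph{not} $r_n$: the large drop $r_n\to 2r_{n+1}$ is taken at the outset (this is exactly what verifies hypotheses (1)--(2) of Proposition~\ref{basic lemma} with $\mathbbm r=r_n$, $\tilde{\mathbbm r}\leq r_{n+1}$), and the inner steps then shrink geometrically via $\tilde\sigma_j=\tfrac{\tilde r_0}{5\cdot 2^{j+1}}$ down to $\tilde r_N\geq r_{n+1}$; the loss is at the start, not ``in the final inner step''. The inner truncation is the growing $\tilde K_j=\tilde\sigma_j^{-1}\ln\tilde\epsilon_{j-1}^{-1}$, not the fixed $K_{n+1}$; one checks $\tilde K_N\geq K_{n+1}$ at the end (Lemma~\ref{ite}). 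Second, and crucially (see the Remark after Lemma~\ref{Lem4.2}), during the inner iterations the $\theta$-dependent diagonal is split as $B_p+b_p^j$ with $B_p=B_p^n$ \emph{frozen} on the large strip $r_n$ and only the small $b_p^j$ accumulating on $\tilde r_j$; one sets $B_p^{n+1}=B_p^n+b_p^N$ only at the very end. Merging $b_p^j$ into $B_p$ at each inner step would destroy hypothesis~(2) of Proposition~\ref{basic lemma}, since $B$ would then live only on $\tilde r_j$ rather than $r_n$.
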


\section{Proof of the main results}
\subsection{Proof of Theorem \ref{kam}}
We are now in   position to prove Theorem \ref{kam}. We start with the Hamiltonian
 \begin{equation}\label{ham2}
 H_0=e_0(\theta;\xi)+ \la\omega_1,I\ra+\la\omega_2,J\ra+\sum_{p\geq 1}  (\mathbf\Omega_p^0+B_p^0(\theta;\xi))|z_p|^2+P_0(\theta,\varphi,z,\bar z;\xi)\end{equation}
which is defined on  $D(r_0,s_0)\times \mathcal O_0$, where
\[r_0=r,\, s_0=s,\gamma_0=\gamma,\, K_0=r_0^{-1}\ln\epsilon_0^{-1},e_0=0, \mathbf\Omega_p^0=\mathbf\Omega_p,B_p^0=0, P_0=P.\]
\begin{equation}\nonumber
\mathcal O_0=
\left\{\xi\in\mathcal O:
\begin{array}{ll}
|\la k,\omega_1\ra+\la l,\omega_2\ra\pm{\mathbf\Omega}^0_p|\geq  {\gamma_0\over (|k|+|l|+1)^\tau},&\\
|\la k,\omega_1\ra+\la l,\omega_2\ra\pm({\mathbf\Omega}^0_p+{\mathbf\Omega}^0_q)|\geq  {\gamma_0\over (|k|+|l|+1)^\tau},&\\
|\la k,\omega_1\ra+\la l,\omega_2\ra\pm({\mathbf\Omega}^0_p-{\mathbf\Omega}^0_q)|\geq  {\gamma_0\over (|k|+|l|+1)^\tau},&|l|+|p-q| \neq 0;
\end{array}
\right\}
\end{equation}

Then the assumption of  Proposition \ref{KAM iteration}  are satisfied for $n=0$ since $\epsilon<\epsilon_0$ and $\mathcal E_0=0$,  we thus get the symplectic transformation  $\Phi_1:D(r_1,s_1)\times\mathcal O_1\to D(r_0,s_0)$. Inductively we  obtain a sequence:
\[\Phi_{n+1}:D(r_{n+1},s_{n+1})\times\mathcal O_{n+1}\to D(r_{n},s_{n}),
\]
such that
\[\Phi^{n+1}=\Phi_1\circ\Phi_2\circ\cdots\circ\Phi_{n+1}:D(r_{n+1},s_{n+1})\times\cal
\mathcal O_{n+1}\to D(r_0,s_0),\,n\ge 0,
\]  conjugate the  Hamiltonian \eqref{ham2} to
\begin{eqnarray*}&&H_{n+1} \\ &=&e_{n+1}(\theta;\xi)+ \la\omega_1,I\ra+\la\omega_2,J\ra+\sum_{p\geq 1} (\mathbf\Omega_p^{n+1}+B_p^{n+1}(\theta;\xi))|z_p|^2 +P_{n+1}(\theta,\varphi,z,\bar z;\xi)\end{eqnarray*}
with estimates:
$$e_{n+1}+\sum\limits_{p\geq1} (\it\Omega_p^ {n+1}+B_p^{n+1}) |z_p|^2+P_{n+1}\in\mathcal F_{r_{n+1},s_{n+1},\mathcal O_{n+1}}(\mathcal E_{n+1},\mathcal E_{n+1},\epsilon_{n+1}).$$
and the  symplectic map satisfy
\begin{eqnarray*}
&&|\Phi_{n+1}-id|_{s_{n+1},D_{n+1}\times \mathcal O_{n+1}}^* \leq \epsilon_n^{\frac{1}{2}},\\
 &&\label{phi-2}\pmb{\pmb |}D(\Phi_{n+1}-id)\pmb{\pmb |}_{s_{n+1},s_{n+1},D_{n+1}\times \mathcal O_{n+1}}^{*}
\leq \epsilon_n^{\frac{1}{2}}.\end{eqnarray*}

For $n\geq 0$, by the chain rule, we get
\begin{equation}\pmb{\pmb |}D\Phi^{n+1} \pmb{\pmb |}_{s_0,s_{n+1},D_{n+1}}\leq \prod\limits_{m=1}^{n+1} \pmb{\pmb |}D\Phi_m \pmb{\pmb |}_{s_{m-1},s_m,D_m}\leq \prod\limits_{m=1}^{n+1}(1+\epsilon_{m-1}^{1\over2})\leq 2\end{equation}

\begin{eqnarray}\pmb{\pmb |}D\Phi^{n+1} \pmb{\pmb |}^{\mathcal L}_{s_0,s_{n+1},D_{n+1}}&\leq&\sum\limits_{m=1}^{n+1}(\pmb{\pmb |}D\Phi_m\pmb{\pmb |}_{s_{m-1},s_m,D_m}^{\mathcal L}\prod\limits_{j=1,j\neq m}^{n+1} \pmb{\pmb |}D\Phi_j\pmb{\pmb |}_{s_j,s_j,D_j})\nonumber \\
&\leq& \sum\limits_{m=1}^{n+1} 2\epsilon_{m-1}^{1\over2}\leq 2.\end{eqnarray}
As a consequence, we have
\begin{equation}|\Phi^{n+1}-\Phi^{n}|_{s_0,D_{n+1}}
\leq \pmb{\pmb |}D\Phi^{n} \pmb{\pmb |}_{s_0,s_n,D_n}|\Phi_{n+1}-id|_{s_{n},D_{n+1}}\leq 2\epsilon_{n}^{2\over3}\nonumber,\end{equation}
and
\begin{eqnarray*}&&|\Phi^{n+1}-\Phi^{n}|^{\mathcal L}_{s_0,D_{n+1}}\\&\leq&\pmb{\pmb |}D\Phi^{n}\pmb{\pmb |}_{s_0,s_n,D_n} |\Phi_{n+1}-id |^{\mathcal L}_{s_{n},D_{n+1}}+\pmb{\pmb |}D\Phi^{n}\pmb{\pmb |}^{\mathcal L}_{s_0,s_n,D_n}|\Phi_{n+1}-id |_{s_{n},D_{n+1}}\nonumber\\
&\leq&2|\Phi_{n+1}-id|^{*}_{s_{n},D_{n+1}}\leq 2\epsilon_n^{1\over2}.\end{eqnarray*}

The remaining task is to prove $C^\infty$ smoothness of  $\Phi^\infty$ on $\Theta\in \T^{2+d}$.
From the choice of parameter  $\epsilon_n$, and for any $b\in \Z^{2+d}$,
 there exists
some $N\in \N$ such that  $Q_{n+1}^{4|b|}<\epsilon_n^{-1/4}$ for any $n\geq N$, that is
$$Q_{n+1}^{4|b|}\epsilon_n^{1\over2}< \epsilon_n^{1\over4},\forall n\geq N.$$
Then according to the Cauchy estimate, one has
\begin{equation}
|{\partial^{|b|}\over\partial \Theta^b}(\Phi^{n+1}-\Phi^{n})|\leq r_{n+1}^{-|b|}|\Phi^{n+1}-\Phi^{n}|_{s_0,D_{n+1}}\leq Q_{n+1}^{4|b|}\epsilon_n^{1\over2}\leq \epsilon_n^{1\over4}
\end{equation}
and also
\begin{equation}
|{\partial^{|b|}\over\partial \Theta^b}(\Phi^{n+1}-\Phi^{n})|^{\mathcal L}\leq r_{n+1}^{-|b|}|\Phi^{n+1}-\Phi^{n}|^{\mathcal L}_{s_0,D_{n+1}}\leq Q_{n+1}^{4|b|}\epsilon_{n}^{1\over2}\leq \epsilon_n^{1\over4}.
\end{equation}
Thus $\Phi^n$ converges uniformly on $\T^{2+d}\times \{0,0,0\}\times\mathcal O_\infty$, and the limit $\Phi^\infty=\lim\limits_{n\rightarrow\infty}\Phi^{n+1}$ is  $C^\infty$ smooth on $\Theta$.
Let $\phi_H^t$ be the flow of $X_H$, since
$H\circ\Phi^{n}=H_{n}$, there is
\beq\label{5.7}
\phi_H^t\circ\Phi^{n}=\Phi^{n+1}\circ\phi_{H_{n}}^t. \eeq
 The
uniform convergence of $\Phi^{n},D\Phi^{n}$ and
$X_{H_{n}}$ implies that the limits can be taken on both sides of
(\ref{5.7}). Hence, on $D(0,0)\times{\mathcal O}$ we
get $$
\phi_H^t\circ\Phi^\infty=\Phi^\infty\circ\phi_{H_{\infty}}^t$$ and
$$
\Psi^\infty:D(0,0)\times{\mathcal O}_\infty\to
 D(r,s).
 $$

By \eqref{measure}, the total measure we excluded is
\begin{equation}
meas(\mathcal O\backslash\mathcal O^\infty)\leq\sum\limits_{n=0}^\infty {C\gamma_n\over K_n}\leq C\gamma_0.\end{equation} \qed

\subsection{Proof of Theorem \ref{thm-appli-1}:}

 As an application of Theorem \ref{kam}, we  study the equation $\eqref{equ}$  on some suitable phase space.
As it is well known, the operator $\partial_{xx}+v(x)$ has  an orthonormal
basis  $\phi_p\in L^2{[0,\pi]}$, $p\geq 1$, with corresponding eigenvalues $\Omega_p$ satisfying the asymptotics for large $p$,
\beq\label{eigenvalues}{\mathbf\Omega}_p=p^2+{1\over\pi}\int_{0}^\pi v(x)dx+o(p^{-1})\eeq

 To write $\eqref{equ}$ in
infinitely many coordinates, we make the ansatz
$$u(t,x)=\mathscr{S}z=\sum_{p\geq1} z_p(t)\phi_p(x),p\geq 1.$$
Then  $\eqref{equ}$ is written as a non-autonomous  Hamiltonian
\begin{eqnarray*}H(u)=\sum_{p\geq1} {\mathbf\Omega}_p |z_p|^2+\epsilon\int_0^\pi F(\omega t,x,\mathscr{S}z,\mathscr{S}\bar z;\xi)dx.\end{eqnarray*}
with  symplectic structure $i\sum\limits_{p\geq 1}dz_p\wedge d\bar z_p$, where $F$ is a function such that $F_{\bar u}(\Theta,x,u,\bar u;\xi)=f(\Theta,x,u,\bar u;\xi)$. Then one has a modified system
\begin{eqnarray}
\left\{\begin{array}{l}
\dot\theta=\omega_1,\\
\dot\varphi=\omega_2,\\
\dot{z}_p=-i{\mathbf\Omega}_p z_p-i\partial_{\bar{z}_p}P(\theta,\varphi,z,\bar z;\xi),\ p\geq 1,\\
\dot{\bar{z}}_p=i {\mathbf\Omega}_p\bar{z}_p+i\partial_{{z}_p}P(\theta,\varphi,z,\bar z;\xi),\ p\geq 1,
\end{array}\right.\label{hs00}
\end{eqnarray}
 We introduce auxiliary action variable $I,J$ and rewrite (\ref{hs00}) to  an autonomous  system for convenience
\begin{eqnarray}
\left\{\begin{array}{ll}
\dot\theta=\omega_1,\\
\dot\varphi=\omega_2,\\
\dot{I}=-\partial_{{\theta}}P(\theta,\varphi,z,\bar z;\xi)\\
\dot{J}=-\partial_{{\varphi}}P(\theta,\varphi,z,\bar z;\xi)\\
\dot{z}_p=-i{\mathbf\Omega}_pz_p-i\partial_{\bar{z}_p}P(\theta,\varphi,z,\bar z;\xi),\quad p\geq 1,\\
\dot{\bar{z}}_p=i {\mathbf\Omega}_p\bar{z}_p+i\partial_{{z}_p}P(\theta,\varphi,z,\bar z;\xi),\quad p\geq 1.
\end{array}\right.\label{hs01}
\end{eqnarray}
That is we consider  the Hamiltonian
\begin{eqnarray*}H&=&N+P(\theta,\varphi,z,\bar z;\xi) \\
&=&\la\omega_1,I\ra+\la\omega_2,J\ra+\sum_{p\geq1} {\mathbf\Omega}_p |z_p|^2+\epsilon\int_0^\pi F(\theta,\varphi,x,\mathscr{S}z,\mathscr{S}\bar z;\xi)dx\end{eqnarray*}
with  symplectic structure
$d I\wedge d\theta+d J\wedge d\varphi+i\sum\limits_{p\geq 1}dz_p\wedge d\bar z_p$.\\

Next let us verify that $H = N + P$ satisfies the assumptions of Theorem \ref{kam}.
Recall the  eigenvalue $\mathbf\Omega_p$  satisfy  \eqref{eigenvalues},  and $v(x)$ is independent of $\xi$,  thus one has
 $$\sup_{p\geq1}|\mathbf\Omega_p-p^2|^*_{\mathcal O}=\sup_{p\geq1}|{1\over\pi}\int_{0}^\pi v(x)dx+o(p^{-1})|^*_{\mathcal O}<{1\over 2}.$$

The regularity of the perturbation is given by the following basic Lemma.
\begin{Lemma} \label{per NLS}
Suppose that $v(x)$ is real analytic in $x$, then for small enough $r,s,a,\rho> 0$, $X_P$ is real analytic as a map
from some neighborhood of the origin in $\ell^{a,\rho}$ to $\ell^{a,\rho}$, in particularly
$$|X_P|_{s,D(r,s)\times \mathcal O}^*\leq\epsilon.$$
\end{Lemma}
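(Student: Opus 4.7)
My plan is to reduce everything to two classical facts about the Sturm--Liouville operator $-\partial_{xx}+v(x)$ with Dirichlet conditions: (a) since $v$ is real analytic on $[0,\pi]$, each eigenfunction $\phi_p$ extends analytically to a common complex strip $\{|\Im x|<a_0\}$ with a uniform bound $\sup_{|\Im x|\leq a_0}|\phi_p(x)|\leq C_0 e^{a_0 p}$ and with the sine-like asymptotics $\phi_p(x)=\sqrt{2/\pi}\sin(px)+O(1/p)$; (b) for any function $g(x)$ analytic on some complex neighborhood of $[0,\pi]$, the generalized Fourier coefficient $\int_0^\pi g(x)\phi_p(x)\,dx$ decays exponentially in $p$. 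Fact (a) follows by treating the eigenvalue equation $\phi_p''=(v-\mathbf\Omega_p)\phi_p$ as a linear ODE with analytic coefficients; fact (b) is a Paley--Wiener type statement obtained by subtracting the leading sine asymptotics and shifting the integration contour into the strip of analyticity of $g$.

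With these two in hand, I would first choose $a,\rho,r,s>0$ with $a<a_0$, $\rho>1/2$ and $r<a$, and verify that the synthesis operator
$$\mathscr{S}:\{|z|_{a,\rho}<s\}\to\{\text{functions analytic on }|\Im x|<r\},\qquad \mathscr{S}z(x)=\sum_{p\geq 1}z_p\phi_p(x),$$
is well defined and real analytic, with $\sup_{|\Im x|<r}|\mathscr{S}z(x)|\leq Cs$ by Cauchy--Schwarz against the weight $p^{-\rho}e^{-ap}$. Real analyticity of $F$ in $(\Theta,x,u,\bar u)$ then transfers to real analyticity of $P(\Theta,z,\bar z;\xi)=\epsilon\int_0^\pi F(\Theta,x,\mathscr{S}z,\mathscr{S}\bar z;\xi)\,dx$ on $D(r,s)$, with sup-norm $O(\epsilon)$ uniformly in $\xi\in\mathcal O$.

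The remaining task is to estimate the components of $X_P=(-P_\theta,-P_\varphi,iP_{\bar z},-iP_z)$ in the weighted norm $|\cdot|_s$. For the action components one applies Cauchy estimates in $\Theta$. For the infinite dimensional components, differentiating under the integral gives $\partial_{\bar z_p}P=\epsilon\int_0^\pi f(\Theta,x,\mathscr{S}z,\mathscr{S}\bar z;\xi)\,\phi_p(x)\,dx$, where $f=F_{\bar u}$, and this integrand is analytic in $x$ on $\{|\Im x|<r\}$ uniformly in $(\Theta,z,\bar z)\in D(r,s)$. Applying fact (b) with $g(x)=f(\cdots)$ then yields $|\partial_{\bar z_p}P|\leq C\epsilon\,e^{-a'p}$ for any $a'<a_0$, and choosing $a'$ slightly bigger than $a$ delivers $|P_{\bar z}|_{a,\rho}\leq C\epsilon$; the estimate for $P_z$ is symmetric. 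The Lipschitz-in-$\xi$ part of $|\cdot|^*$ follows verbatim from the Lipschitz dependence of $F$ on $\xi$ together with the $\xi$-independence of the basis $\{\phi_p\}$.

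I expect the main technical point to be a quantitative version of fact (b) with strip width $a'>a$ that is uniform in $(\Theta,z,\bar z)\in D(r,s)$: one has to check that the width of analyticity of $g(x)=f(\Theta,x,\mathscr{S}z,\mathscr{S}\bar z;\xi)$ in $x$ is controlled by the original strip of $F$ minus the losses coming from $\mathscr{S}z$, and then to ensure this width still exceeds $a$ after taking $r$ and $a$ sufficiently small. Once this choice of constants is bookkept correctly, the rest of the argument is routine.
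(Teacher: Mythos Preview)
Your approach is correct and is essentially the same as the paper's: both hinge on the analyticity of the eigenfunctions $\phi_p$ (coming from the real-analyticity of $v$) and on the explicit formula $\partial_{\bar z_p}P=\epsilon\int_0^\pi f(\Theta,x,\mathscr{S}z,\mathscr{S}\bar z;\xi)\phi_p(x)\,dx$. The paper's own proof, however, is little more than a two-line sketch that writes down this formula and declares the bound; you have filled in precisely the analysis the paper suppresses --- the uniform strip of analyticity for the $\phi_p$, the Paley--Wiener decay of $\int g\phi_p$, the boundedness of the synthesis map $\mathscr{S}$, and the Lipschitz estimate in $\xi$. So there is no genuine difference in strategy, only in level of detail, and your expanded version is the honest proof behind the paper's assertion.
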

\begin{proof}From the hypotheses that $v(x)$ is real analytic, it follows that the eigenfunctions $\phi_p, p\geq1$
are analytic. Let $u(t,x)=\sum_{p\geq1} z_p\phi_i$ and $\bar u(t,x)=\sum_{p\geq1} \bar z_p\phi_i$ with $z,\bar z\in \ell^{a,\rho}$.
Since  $$\partial_{{z}_p}P(\theta,\varphi,z,\bar z;\xi)=-i\epsilon\int_{0}^\pi f(x,\sum_{p\geq1} z_p\phi_i ,\sum_{p\geq1} \bar z_p\phi_i;\xi)\phi_p dx.$$  It follows that
 $|X_P|^*_{s,D(r,s)\times\mathcal O}\leq \epsilon.$\end{proof}\qed

Thus  Theorem \ref{kam} is applicable, and  the  system $\eqref{hs00}$ is conjugate to
\begin{eqnarray}
\left\{\begin{array}{l}
\dot\theta=\omega_1,\\
\dot\varphi=\omega_2,\\
\dot{z}_p=-i( {\mathbf\Omega}_p^*+B^*_p(\theta;\xi))z_p-i\partial_{\bar{z}_p}P^*(\theta,\varphi,z,\bar z;\xi),\ p\geq 1,\\
\dot{\bar{z}}_p=i( {\mathbf\Omega}^*_p+B_p^*(\theta;\xi))\bar{z}_p+i\partial_{{z}_p}P^*(\theta,\varphi,z,\bar z;\xi),\ p\geq 1,
\end{array}\right.\label{hs01}
\end{eqnarray}
by  $\Phi: \T^{2+d}\times \{0,0\}\rightarrow D(r,s)$.
Since $\Theta=(\theta,\varphi),\omega=(\omega_1,\omega_2)$, $(\Theta^*(0)+\omega t; 0; 0)$ is a solution of \eqref{hs01}. Let $(\Theta(t); z(t); \bar z(t))=\Phi(\Theta^*(0)+
\omega t; 0; 0)$, by Theorem \ref{kam},
$z(t) = g(\Theta^*(0) + \omega t)$  is $C^\infty$ smooth in $t$. Then
$(\Theta(t); z(t); \bar z(t)) = (\Theta^*(0) +\omega t; g(\Theta^*(0) + \omega t); \bar g(\Theta^*(0) + \omega t))$
is a solution of $\eqref{hs00}$ for any
$\xi\in\mathcal O_\gamma$, and  the equation $\eqref{equ}$ has a quasi periodic solution $$u(t,x)=\mathscr{S}z=\sum_{j\geq 1}g_j(\Theta^*(0) + \omega t)\phi_j(x)=\sum_{j\geq 1}g_j(\Theta^*(0) + \xi\bar\omega t)\phi_j(x)$$ which is  $C^\infty$ smooth in $t$. Thus we have our result of Theorem \ref{thm-appli-1}.
%

\section{Proof of Proposition \ref{KAM iteration}}
This main proposition is proved by KAM iteration. As we mentioned before, finite many iterations are required. Since our homological
equation depends on the angle $\theta$, it will be  hard for us to solve this equation. Thus in the following, we first introduce an abstract
result on the homological equation,   a finite  iteration lemma will be given and then we complete the proof of Proposition \ref{KAM iteration}.

\subsection{Homological equation}\label{homoeq}
During  the KAM iteration, a more complicated homological  equation come out, namely:
 \begin{equation}\label{3.1}
-i\la \partial_\theta F_{\ell}(\theta,\varphi),\omega_1\ra-i\la \partial_\varphi F_{\ell}(\theta,\varphi),\omega_2\ra +\la \ell,\mathbf\Omega+B(\theta)\ra F_{\ell}(\theta,\varphi)=R_{\ell}(\theta,\varphi),
\end{equation}
where  $\ell\in \Z^{\N}$ with $|\ell|=1$ or $2$. As $B(\theta)$ is of size $\varepsilon_0$ and $(\omega_1,\omega_2)$ is Liouvillean, \eqref{3.1} will have no analytic solution.
Actually,  Wang-You-Zhou \cite{WYZ} met similar problem when they consider response solutions of harmonic oscillators, then the first and second Melnikov conditions are required for $\forall \ell\in\Z^d$ with $ |\ell|=1$ or $2$,
\begin{equation}\label{2.5}|\la k, \bar\omega_1\ra+\la\ell, \mathbf\Omega(\lambda)\ra|\geq {\gamma\over (|k|+|\ell|)^\tau}.\end{equation}
 For small divisor as above, the key  observation is the following : for a very large and specialized truncation $K$, $|\la k,\bar\omega_1\ra+\la\ell, \mathbf\Omega(\lambda)\ra|$
  has an  uniform relative large lower bound for any $k$ such that
   $|k|\leq K$ (Lemma 3.2 of \cite{WYZ}). 
  With this  observation, they  construct $C^\omega$ smooth  response solution for any $\alpha\in \mathbb R\backslash\mathbb Q$.
  However, this phenomena is not right for the problem we meet, since in our case, $\ell\in \Z^N$ with $|\ell|=1$ or $2$, therefore  there are infinitely many choices of $l$.
  Similar problem was also met during the work of Krikorian-Wang-You-Zhou \cite{KWZY}, where they solved  the following homological equation
$$\partial_{\bar\omega_1} h(\theta,\phi)+(\rho+b(\phi))\frac{\partial h}{\partial \theta}=f(\theta,\phi)$$
with Melnikov condition
 $$|\la k,\bar\omega_1\ra+\rho l|\geq {\gamma\over (|k|+|l|)^\tau},\quad\forall \ell\in \Z\backslash\{0\},k\in\Z^2.$$
by the method of diagonally dominant (Proposition 4.1  of \cite{KWZY}). In this paper, we will borrow some method developed in \cite{KWZY}, but gave more concise argument and uniform ways to deal with this kind of equations, we believe it will have more  applications.  Also we stress that we can deal with multi-frequencies, while all the former results were restricted to one frequency (thus two frequency in the continuous case).

 \begin{Proposition}\label{basic lemma}
 Let $\gamma>0,\lambda\geq1,\tau>d+6$ and $\zeta\in \R\backslash\{0\}$,  $0<\tilde\sigma<\tilde{\mathbbm r}<{\mathbbm r}$, $0<\eta_1,\eta_2,\tilde \eta<1$.
  Consider the equation
 \begin{equation}\label{basic homological equation}-i\la \partial_\theta F(\theta,\varphi),\bar\omega_1\ra-i\la \partial_\varphi F(\theta,\varphi),\bar\omega_2\ra+(\zeta+B(\theta)+b(\theta)) F(\theta,\varphi)=R(\theta,\varphi)\end{equation}
 with $[B(\theta)]=[b(\theta)]=0$. Suppose that  $|B(\theta)|_{\mathbbm r}\leq \eta_1, |b(\theta)|_{\tilde{\mathbbm  r}}\leq  \eta_2, |R(\theta)|_{\tilde {\mathbbm r}}\leq \tilde\eta$ which furthermore satisfy the following condition:
 \begin{enumerate}
\item $\eta_1 e^{-|{\mathbbm  r}-{\tilde{\mathbbm  r}}|Q_{n+1}}\leq \eta_2$\label{cond1}

\item  $2\eta_1 Q_{n+1}^3{\tilde {\mathbbm  r}}   \leq ({\mathbbm  r}-{\tilde {\mathbbm  r}})^4$,\label{cond2}

\item $K={1\over{\tilde \sigma}}\ln{1\over \tilde\eta}\leq({\gamma\over 2\eta_2})^{1\over \tau+2}$\label{cond3}

 \item $|\la k,\bar\omega_1\ra+\la l,\bar\omega_2\ra+\zeta|\geq{\gamma\lambda\over (|k|+|l|+1)^{\tau}},\quad  |k|+|l|\leq K$.\label{cond4}
\end{enumerate}
Then the equation \eqref{basic homological equation} has an approximate solution $F(\theta,\varphi)$  with estimation
\begin{equation}
|F(\theta,\varphi)|_{{\tilde{\mathbbm  r}}-{\tilde \sigma}}\leq {c\tilde\eta\over \lambda\gamma {\tilde \sigma}^{3+\tau}}.\end{equation}
Moreover, the error term satisfies
\begin{eqnarray}\label{errorper}
|\breve R|_{{\tilde{\mathbbm  r}}-{\tilde \sigma}}&=&|e^{i\mathcal B(\theta)}(I-\mathcal T_K)(e^{-i\mathcal B(\theta)}R(\theta,\varphi)-(b(\theta)+(I-\mathcal T_K) B(\theta))F(\theta,\varphi))|_{{\tilde {\mathbbm  r}}-{\tilde \sigma}}\nonumber\\
&\leq& {\tilde \eta^2\over\lambda\gamma{\tilde \sigma}^{3+\tau}},
\end{eqnarray}
where $\mathcal B(\theta)$ is the solution of  $\la\partial_\theta \mathcal B(\theta),{\bar\omega_1}\ra=\mathcal T_{ Q_{n+1}}B(\theta).$
\end{Proposition}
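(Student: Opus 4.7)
The plan is to eliminate the dominant potential $B(\theta)$ from \eqref{basic homological equation} by a multiplicative ``change of gauge'' $F = e^{-i\mathcal B(\theta)}G$, reducing the problem to a homological equation with only a small $\theta$-potential, and then solve the latter by ordinary Fourier division on the finite window $|k|+|l|\leq K$. This is the infinite-dimensional, multi-frequency analogue of the reduction used for $SL(2,\mathbb R)$-cocycles with Liouvillean base in \cite{AFK,KWZY}; the entire scheme is calibrated by the three Fourier scales $Q_{n+1}$ (cutoff in the gauge), $K$ (Melnikov window), and $\tilde\sigma$ (analyticity loss).

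First I would construct $\mathcal B$ by Fourier division of $\la\partial_\theta \mathcal B,\bar\omega_1\ra=\mathcal T_{Q_{n+1}}B$: for $0<|k|\leq Q_{n+1}$ the lower bound on $|\la k,\bar\omega_1\ra|$ coming from the continued-fraction estimate \eqref{lowerbound1} gives $|\hat{\mathcal B}_k|\lesssim Q_{n+1}|\hat B_k|$, and combined with the analyticity loss from ${\mathbbm r}$ to $\tilde{\mathbbm r}$ the condition (2) of the proposition forces $|\mathcal B|_{\tilde{\mathbbm r}}$ small and $|e^{\pm i\mathcal B}|_{\tilde{\mathbbm r}}\leq 2$. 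Substituting $F=e^{-i\mathcal B}G$ into \eqref{basic homological equation}, the gauge derivative exactly cancels the low-frequency part $\mathcal T_{Q_{n+1}}B$ of $BG$, and after dividing out $e^{-i\mathcal B}$ one obtains
\begin{equation*}
-i\la\partial_\theta G,\bar\omega_1\ra -i\la\partial_\varphi G,\bar\omega_2\ra + \bigl(\zeta+\tilde b(\theta)\bigr)G = e^{i\mathcal B}R,
\end{equation*}
with $\tilde b:=(I-\mathcal T_{Q_{n+1}})B+b$. Condition (1), $\eta_1 e^{-({\mathbbm r}-\tilde{\mathbbm r})Q_{n+1}}\leq \eta_2$, is precisely the Paley--Wiener estimate that guarantees $|(I-\mathcal T_{Q_{n+1}})B|_{\tilde{\mathbbm r}}\leq \eta_2$, so $|\tilde b|_{\tilde{\mathbbm r}}\lesssim \eta_2$.

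Next I would solve this transformed equation by truncating at Fourier level $K$ and dividing by the diagonal part $L_0:=\zeta+\la k,\bar\omega_1\ra+\la l,\bar\omega_2\ra$. The Melnikov bound (4) gives $|L_0(k,l)|\geq \gamma\lambda/(|k|+|l|+1)^\tau$ on the truncation, hence $\|L_0^{-1}\|\lesssim K^\tau/(\gamma\lambda)$; condition (3), $K^{\tau+2}\leq \gamma/(2\eta_2)$, then makes the multiplication-by-$\tilde b$ operator strictly subdominant relative to $L_0$, so a Neumann-series / diagonal-dominance argument (in the spirit of Proposition 4.1 of \cite{KWZY}) inverts $L_0+\tilde b$ on the $K$-truncation with analytic norm $\lesssim (\gamma\lambda\tilde\sigma^{\tau+d+2})^{-1}$. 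Pulling back by $e^{-i\mathcal B}$ yields the claimed bound on $|F|_{\tilde{\mathbbm r}-\tilde\sigma}$.

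The residual $\breve R$ has only the two natural sources of error: the high-Fourier tail $(I-\mathcal T_K)(e^{i\mathcal B}R)$ of the truncated source, and the high-mode part of the mode-coupling $\tilde b F$; the algebra of the proposition packages both into the single expression $e^{i\mathcal B}(I-\mathcal T_K)[e^{-i\mathcal B}R-(b+(I-\mathcal T_K)B)F]$. Each $(I-\mathcal T_K)$ contributes an analyticity-decay factor $e^{-K\tilde\sigma}$, and the choice $K=\tilde\sigma^{-1}\ln(1/\tilde\eta)$ from (3) converts this into the second factor of $\tilde\eta$ needed for the $\tilde\eta^2$ bound. The main obstacle is the simultaneous compatibility of the three Fourier scales: $Q_{n+1}$ (forced large by the Liouvillean character of $\bar\omega_1$, in order to control the divisors $\la k,\bar\omega_1\ra$ needed to build $\mathcal B$), $K$ (forced relatively small by both the Melnikov condition (4) and the size $\eta_2$ of $\tilde b$), and the analyticity loss $\tilde\sigma$ (large enough for Fourier truncation to be efficient, small enough to leave usable analytic strip for the next step). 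Conditions (1)--(4) encode exactly this balance, and it is the $Q_{n+1}^3$ factor in (2) together with the exponential tail in (1) that ultimately force the super-exponentially fast decay of $\epsilon_n$ built into the iteration sequence \eqref{infinite iteration sequence}.
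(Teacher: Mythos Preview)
Your overall strategy is the paper's: remove the bulk of $B$ by the multiplicative gauge $e^{i\mathcal B}$ with $\la\partial_\theta\mathcal B,\bar\omega_1\ra=\mathcal T_{Q_{n+1}}B$, then solve the reduced equation on the Fourier window $|k|+|l|\le K$ by a Neumann/diagonal-dominance argument governed by conditions (3) and (4). Your identification of the roles of conditions (1), (3), (4) and of the residual term is correct.

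There is, however, a genuine gap at the gauge step. You claim that condition (2) forces $|\mathcal B|_{\tilde{\mathbbm r}}$ to be small and hence $|e^{\pm i\mathcal B}|_{\tilde{\mathbbm r}}\le 2$. This is false. From $|\hat{\mathcal B}(k)|\le 2Q_{n+1}|\hat B(k)|$ one only obtains
\[
|\mathcal B|_{\tilde{\mathbbm r}}\ \lesssim\ \frac{Q_{n+1}\,\eta_1}{({\mathbbm r}-\tilde{\mathbbm r})^2},
\]
and in the regime of the iteration (${\mathbbm r}\sim Q_n^{-4}$, $\tilde{\mathbbm r}\sim Q_{n+1}^{-4}$, $\eta_1=\mathcal E_n\sim\epsilon_0^{1/2}$) this is of order $Q_{n+1}Q_n^{8}\eta_1$, which blows up. Condition (2) does not rescue this: substituting $2\eta_1 Q_{n+1}^3\tilde{\mathbbm r}\le({\mathbbm r}-\tilde{\mathbbm r})^4$ leaves $({\mathbbm r}-\tilde{\mathbbm r})^2/(Q_{n+1}^2\tilde{\mathbbm r})\gtrsim Q_{n+1}^2/Q_n^{8}$, still enormous since $Q_{n+1}\ge Q_n^{\mathcal A}$ with $\mathcal A\ge 10$.

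What the paper actually controls is only $|\mathrm{Im}\,\mathcal B|_{\tilde{\mathbbm r}}$, which is all that is needed since $|e^{\pm i\mathcal B(\theta)}|=e^{\mp\mathrm{Im}\,\mathcal B(\theta)}$ on the complex strip. Writing $\theta=x+iy$ and splitting $\mathcal B=\mathcal B^1+\mathcal B^2$ with
\[
\mathcal B^1:=\sum_{0<|k|<Q_{n+1}}\frac{\hat B(k)}{i\la k,\bar\omega_1\ra}\,e^{i\la k,x\ra},
\]
the \emph{real-analyticity} of $B$ (i.e.\ $\overline{\hat B(k)}=\hat B(-k)$) makes $\mathcal B^1$ real-valued, so $\mathrm{Im}\,\mathcal B=\mathrm{Im}\,\mathcal B^2$. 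Each term of $\mathcal B^2$ carries the factor $e^{i\la k,\theta\ra}-e^{i\la k,x\ra}$, of modulus $\le |k|\,\tilde{\mathbbm r}\,e^{|k|\tilde{\mathbbm r}}$, so one gains the crucial extra factor $\tilde{\mathbbm r}$ and obtains
\[
|\mathrm{Im}\,\mathcal B|_{\tilde{\mathbbm r}}\ \le\ \frac{\eta_1 Q_{n+1}^3\,\tilde{\mathbbm r}}{({\mathbbm r}-\tilde{\mathbbm r})^4}\ \le\ \tfrac12,
\]
which is precisely what condition (2) is calibrated for (note the $\tilde{\mathbbm r}$ on its left-hand side). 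This ``imaginary-part trick'' is taken from \cite{YZ2} and is the missing ingredient in your argument; without it the conjugated data $e^{i\mathcal B}R$ and the back-substitution $F=e^{-i\mathcal B}\tilde F$ are uncontrolled and the construction collapses.
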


%
%
%
%
\begin{Remark}
The above Proposition \ref{basic lemma} holds irrespectively of any arithmetical property of $\bar \omega=({\bar\omega_1},\bar\omega_2)$.
\end{Remark}

\begin{Remark}
The assumption $2$ means there is a quick shrink from analytic radius ${\mathbbm  r}$ to ${\tilde {\mathbbm  r}}$. We have this assumption if  ${\mathbbm  r}={r_0\over Q_n^4}$ and ${\tilde {\mathbbm  r}}\leq{r_0\over Q_{n+1}^4}$.
\end{Remark}

\begin{proof}
Let $\tilde F(\theta,\varphi)=F(\theta,\varphi)e^{i{\mathcal B(\theta)}}$, where $\mathcal B(\theta)$ is the solution of  $$\la\partial_\theta \mathcal B(\theta),{\bar\omega_1}\ra=\mathcal T_{ Q_{n+1}}B(\theta).$$
Then we consider equation
\begin{equation}\label{basic homological equation II}\mathcal T_K(-i\la \partial_\theta \tilde F,\bar\omega_1\ra-i\la \partial_\varphi \tilde F,\bar\omega_2\ra+(\zeta+ \tilde b(\theta)) \tilde F(\theta,\varphi))=\mathcal T_K{\tilde R}(\theta,\varphi)\end{equation}
with $\tilde b(\theta)=(I-\mathcal T_K) B(\theta)+ b(\theta),\,\tilde R(\theta,\varphi)=e^{i\mathcal B(\theta)}R(\theta,\varphi)$.
By the assumption \ref{cond1}, one has \begin{eqnarray}
|\tilde b(\theta)|_{\tilde {\mathbbm  r}}&\leq& e^{-|{\mathbbm  r}-{\tilde {\mathbbm  r}}|Q_{n+1}}\eta_1+\eta_2\leq 2\eta_2.
\end{eqnarray}
 Since we only seek approximation of \eqref{basic homological equation},  we set $\mathcal T_K {\tilde F}=\tilde F$ for convenience below.

 In order to control the norm of $\tilde R$, which is a  conjugation of  $R$ by $e^{i\mathcal B}$, it is  sufficient to estimate $\texttt{Im }\mathcal B$.
As argued in Lemma 4.1 of  \cite{YZ2},  let $\theta=x+iy$ and recall  $\mathcal B(\theta)=:\sum\limits_{0<|k|<Q_{n+1}}{\hat B(k)\over i\la k,\bar\omega_1\ra}e^{i\la k,\theta\ra}$ with $|\hat B(k)|\leq |B|_{\mathbbm  r}e^{-|k|{\mathbbm  r}}$, we define
\begin{equation}\mathcal B^1=:\sum\limits_{0<|k|<Q_{n+1}}{\hat B(k)\over i\la k,\bar\omega_1\ra}e^{i\la k,x\ra}, \,\mathcal B^2=\mathcal B-\mathcal B^1.\end{equation}
Since $B(\theta)$ is real analytic, one has $\texttt{Im }\mathcal B^1=0$ and $\texttt{Im } \mathcal B=\texttt{Im }\mathcal B^2$,
 \begin{eqnarray}
|\texttt{Im} \mathcal B|_{\tilde {\mathbbm  r}}&=&|\texttt{Im} \mathcal B^2|_{\tilde {\mathbbm  r}}\leq \sum\limits_{1\leq |k|< Q_{n+1}}|{\hat B^n(k)\over \la k,\bar\omega_1\ra}||e^{-\la k,y\ra}-1|\nonumber\\
&\leq& |B(\theta)|_{\mathbbm  r} Q_{n+1}\sum\limits_{1\leq |k|< Q_{n+1}}2e^{-|k|({\mathbbm  r}-{\tilde {\mathbbm  r}})}|k|{\tilde{\mathbbm  r}}\nonumber\\
 &\leq& {\eta_1Q_{n+1}^3{\tilde {\mathbbm  r}}\over  ({\mathbbm  r}-{\tilde {\mathbbm  r}})^4}
   \leq{1\over2},
\end{eqnarray}
where the last inequality is given by assumption \ref{cond2}. As a consequence, we have
\begin{equation}\label{per-homo}|\tilde R(\theta,\varphi)|_{\tilde {\mathbbm  r}}\leq e^{|\texttt{Im} \mathcal B|_{\tilde {\mathbbm  r}}}\cdot |R(\theta,\varphi)|_{{\tilde {\mathbbm  r}}}\leq 2\tilde\eta.\end{equation}

Now we start to solve the equation \eqref{basic homological equation II}.

Let $$\tilde F(\theta,\varphi)=\sum_{l}\tilde F_l(\theta)e^{i\la l,\varphi\ra},\tilde F_l(\theta)=\sum_{|k|\leq K-|l|}\tilde F_{l}^ke^{i\la k,\theta\ra},$$
$$\tilde R(\theta,\varphi)=\sum_{l}\tilde R_l(\theta)e^{i\la l,\varphi\ra},\tilde R_l(\theta)=\sum_{|k|<K-|l|}\tilde R_{l}^ke^{i\la k,\theta\ra}.$$
Then \eqref{basic homological equation II} is equivalent to the equations below for  $|l|\leq K$,

 \begin{equation}\label{basic homological equation III}(A_l+D_l)\tilde{\mathcal F}_l=\tilde{\mathcal R}_l,\end{equation}
 where
$$A_{l}=diag(\cdots,\la k,\bar\omega_1\ra+\la l,\bar\omega_2\ra+\zeta,\cdots)_{|k|<K-|l|}, D_l=(\hat{\tilde b}(k_i-k_j))_{|k_i|,|k_j|\leq {K-|l|}} $$

\[\tilde{\mathcal F}_l=({\cdots, {\tilde  F}^k_l},\cdots)_{|k|\leq K-|l|}^T,
\tilde{\mathcal R}_l=({\cdots, {\tilde  R}^k_l},\cdots)_{|k|\leq K-|l|}^T.\]

Let $M_{l,r'}=diag(\cdots,e^{|k|r'},\cdots)_{|k|\leq K-|l|}$ for any $r'\leq {\tilde {\mathbbm  r}}$, then \eqref{basic homological equation III} is equivalent to
 $$M_{l,r'}(A_l+D_l)M_{l,r'}^{-1}M_{l,r'}\tilde{\mathcal F}_l=M_{l,r'}\tilde{\mathcal R}_l.$$
We rewrite is to be
\begin{equation}(A_l+D_{l,r'})\tilde{\mathcal F}_{l,r'}=\tilde{\mathcal R}_{l,r'}\end{equation}
 where $D_{l,r'}=M_{l,r'}D_l M_{l,r'}^{-1}, \tilde{\mathcal F}_{l,r'}=M_{l,r'}\tilde{\mathcal F}_l,\tilde{\mathcal R}_{l,r'}=M_{l,r'}\tilde{\mathcal R}_l.$
A simple calculous show that $\|D_{l,r'}\|\leq (K-|l|)^2\eta_2$.



By assumption \ref{cond3} and  \ref{cond4}, we have
 $$|\la k,\bar\omega_1\ra+\la l,\bar\omega_2\ra+\zeta|\geq {\gamma\lambda\over (|k|+|l|+1)^{\tau}}\geq {\gamma\lambda\over K^{\tau}}\geq(K-|l|)\eta_2^{1\over2}. $$
for all $|k|+|l|< K$. As a result,  the diagonally
dominant  operators $A_l+D_{l,r'}$ has
a bounded inverse and $\|(I+A_l^{-1}D_{l,r'}\|_{op( l^{1})}<2$ for $r'={\tilde{\mathbbm r}}-{\tilde \sigma}$,  where $\|\cdot\|_{op(l^1)}$ denotes the operator norm associated to the $l^1$ norm $|u|_{l^1}=\sum_{|k|<K-| l |} | u^k|$. To see this, one can compute

\begin{eqnarray}\|A_{l}^{-1}D_{l,r'}\|_{op(\ell^1)}
&\leq& \max_{|k_i|\leq K} \sum_{|k_j|\leq K}{ |\hat {\tilde b}(k_i-k_j)|e^{(|k_i|-|k_j|)r'}\over |\la k_i,\bar\omega_1\ra+\la l,\bar\omega_2\ra+\zeta|}\\
&\leq&  \max_{|k_i|\leq K} \sum_{|k_j|\leq K} { e^{-|k_i-k_j|{\tilde {\mathbbm  r}}+(|k_i|-|k_j|)r'} K^{\tau}\eta_2 \over \gamma \lambda}\nonumber\\
 &\leq &{ K^{\tau+2}\eta_2 \over \gamma \lambda} \leq{ K^{\tau+2}\eta_2 \over \gamma} \leq  1/2,\nonumber\end{eqnarray}
 where the last inequality follows from the assumption \ref{cond3}.

Since $$(I_l+A_l^{-1}D_{l,r'})^{-1}=\sum\limits_{i=0}^\infty (-1)^n (A_l^{-1}D_{l,r'})^n.$$
one has  $\|(I_l+A_l^{-1}D_{l,r'})^{-1}\|_{op(\ell^1)}\leq 2.$
As a conclusion,  the approximate solution
$\mathcal F_{l,r'}=(I+A_l^{-1}D_{l,r'})^{-1}A_l^{-1}\mathcal F_{l,r'}$ is regular with
\begin{eqnarray*}
&&| \tilde F(\theta,\varphi)|_{{\tilde {\mathbbm  r}}-{\tilde \sigma}}\\
&\leq&\sum_{|k|+|l|<K}|{\tilde F}_l^k|e^{(|k|+|l|)({\tilde {\mathbbm  r}}-{\tilde \sigma})}\\
&=&\sum_{|l|\leq K}(\sum_{|k|<K-|l|}| {\tilde F}^k_l|e^{|k|({\tilde {\mathbbm  r}}-{\tilde \sigma})})e^{|l|({\tilde {\mathbbm  r}}-{\tilde \sigma})}=\sum_{|l|\leq K}|\tilde{\mathcal F}_{l,{\tilde {\mathbbm r}}-{\tilde \sigma}}|_{l^1}e^{|l|({\tilde {\mathbbm  r}}-{\tilde \sigma})}\\
&\leq&\sum_{|l|\leq K}\|(I_l+A_l^{-1}D_{l,r'})^{-1}\|_{op(l^1)}|A_l^{-1}\tilde{\mathcal R}_{l,\tilde {\mathbbm  r}-{{\tilde \sigma}}}|_{l^1}e^{|l|({\tilde {\mathbbm  r}}-{\tilde \sigma})}\\
&\leq&2\sum_{|l|\leq K}\sum_{|k|<K-|l|}{(|k|+|l|+1)^\tau\over\lambda \gamma} |{\tilde R}^k_l|e^{(|k|+|l|)({{\tilde {\mathbbm  r}}-{{\tilde \sigma}}})}\\
&\leq&{c\tilde\eta\over\lambda \gamma {\tilde \sigma}^{3+\tau}}.\end{eqnarray*}
Let $F=e^{-i\mathcal B(\theta)}\tilde F$, like \eqref{per-homo}, one has
$$| F(\theta,\varphi)|_{{\tilde {\mathbbm  r}}-{\tilde \sigma}}\leq e^{|\texttt{Im} \mathcal B(\theta)|_{{\tilde {\mathbbm  r}}}}\cdot |\tilde F(\theta,\varphi)|_{{\tilde {\mathbbm  r}}-{\tilde\sigma}}\leq {c\tilde\eta\over\lambda \gamma {\tilde \sigma}^{3+\tau}}.$$
Moreover, one can compute  $F$ will solve equation \eqref{basic homological equation} with error term $\breve R$,
$$\breve R=e^{i\mathcal B(\theta)}(I-\mathcal T_K)(e^{-i\mathcal B(\theta)}R(\theta,\varphi)-(b(\theta)+(I-\mathcal T_K) B(\theta))F(\theta,\varphi)).$$
The estimation on $\breve R$ is a direct computation.\qed
\end{proof}

\subsection{A finite KAM  induction}
We will prove Proposition \ref{KAM iteration} by induction. We start with the  Hamiltonian \begin{eqnarray*}\tilde H_0={\tilde  e_0}(\theta;\xi)+\la\omega_1,I\ra+\la\omega_2,J\ra+\sum\limits_{p\geq1}(\tilde{ \mathbf \Omega}_p^0(\xi)+ B_p(\theta;\xi))|z_p|^2+\tilde P_0(\theta,\varphi,z,\bar z;\xi)\end{eqnarray*}
 on  $D(\tilde r_0, \tilde s_0)\times\tilde{ \mathcal O}_0,\tilde\epsilon_0=\epsilon_n,\tilde{\mathcal E}_0=\mathcal E_n$, where \begin{equation}\label{newbegining}\tilde r_0=2r_{n+1},\tilde s_0=s_n,\tilde{\mathcal O}_0=\mathcal O_n,\end{equation}
  and $[B_p(\theta;\xi)]=0$,
  $$\tilde e_0+\sum\limits_{p\geq1}(\tilde{\it\Omega}_p^
 {0}+ B_p) |z_p|^2+ \tilde P_0\in\mathcal F_{\tilde r_0,\tilde s_0,\tilde{\mathcal O}_0}(\mathcal E_n,\mathcal E_n,{\tilde\epsilon}_0).$$


Let $N=2+[{2^{n+1}c{\tau}U\ln Q_{n+1}\over 6\tau+9}]$ and $K^n=({\gamma_0\over 4\epsilon_n})^{1\over \tau+2}$, where the former will control the steps of finite KAM iteration and the later is the upper bound control in assumption $4$ of Proposition \ref{basic lemma}. Define  the following iteration sequence  for $j=1,2,3,\cdots, N$,
\begin{equation}\label{minor iteration sequence}
\begin{array}{ll}
{\tilde \epsilon}_j=({\tilde \epsilon}_{j-1})^{\frac{4}{3}}=({\tilde\epsilon}_0)^{(\frac{4}{3})^j},&\tilde {\mathcal E}_j=\sum\limits_{q=0}^{j-1}\tilde\epsilon_q,\\
\tilde\gamma_j= \gamma_n-2\sum\limits_{q=0}^{j-1}{\tilde\epsilon_q^{3\over4}},&\tilde\eta_j=({\tilde\epsilon}_{j-1})^{\frac{1}{3}},\\
\tilde K_j=\tilde\sigma_j^{-1}\ln\tilde\epsilon_{j-1}^{-1},&\tilde\sigma_j={1\over5}{\tilde r_0 \over 2^{j+1}},\\
{\tilde s}_j=\tilde\eta_{j} \tilde s_{j-1},& \tilde r_{j}=\tilde r_{j-1}-5\tilde\sigma_j,\\
\tilde D_j=D(\tilde r_j,\tilde s_j).&
\end{array}
\end{equation}
We also set $\tilde K_0=K_n$.  Then we have the following iteration lemma:

 \begin{Lemma}\label{Lem4.2}
The following holds for $0\leq j\leq N-1$. Suppose  the  Hamiltonian
\begin{equation}\tilde H_{j}= \tilde e_j(\theta;\xi)+\la\omega_1,I\ra+\la\omega_2,J\ra+\sum\limits_{p\geq1} (\tilde{\mathbf\Omega}_p^j(\xi)+ B_p(\theta;\xi)+b_p^j(\theta;\xi)) |z_p|^2+\tilde P_j(\theta,\varphi,z,\bar z;\xi)\end{equation}
is defined on $\tilde D_{j}\times \tilde{ \mathcal  O}_{j}$ with $B_p\in \mathscr B_{r_n}(\mathcal O_n)$ and $[B_p(\theta;\xi)]=0$, which furthermore satisfy
\begin{enumerate}
 \item For any $\xi\in\tilde {\mathcal O}_j$, $|k|+|l|\leq \tilde K_j$ and $p, q\geq 1$ there is
\begin{eqnarray}\label{melnikov}
&&|\la k,\omega_1\ra+\la l,\omega_2\ra\pm\tilde{\mathbf\Omega}_p^j|\geq  {\tilde \gamma_j\over (|k|+|l|+1)^\tau},\nonumber\\
&&|\la k,\omega_1\ra+\la l,\omega_2\ra\pm(\tilde{\mathbf\Omega}_p^j+\tilde{\mathbf\Omega}_q^j)|\geq  {\tilde \gamma_j\over (|k|+|l|+1)^\tau}, \\
&&|\la k,\omega_1\ra+\la l,\omega_2\ra\pm(\tilde{\mathbf\Omega}_p^j-\tilde{\mathbf\Omega}_q^j)|\geq  {\tilde \gamma_j\over (|k|+|l|+1)^\tau},|l|+|p-q|\neq0.\nonumber
\end{eqnarray}
 \item The functions $b_p^j\in\mathscr B_{\tilde r_j}(\tilde{\mathcal O}_j)$  have average zero: $[b^j_{p}(\theta;\xi)]=0$,  and there is $$\tilde e_j+\sum\limits_{p\geq1}(\tilde{\it\Omega}_p^
 {j}+ b_p^{j}) |z_p|^2+ \tilde P_{j}\in\mathcal F_{\tilde r_{j},\tilde s_{j},\tilde{\mathcal O}_{j}}(\mathcal E_n+\tilde{\mathcal E}_{j},\tilde{\mathcal E}_{j},\tilde\epsilon_{j})$$
  \end{enumerate}
\noindent Then there exists a subset $\tilde{ \mathcal  O}_{j+1}=\tilde{ \mathcal  O}_{j}\backslash\tilde{\mathcal R}^{j+1}$ with $meas(\tilde{\mathcal R}^{j+1})\leq{C\tilde\gamma_{j+1}\over \tilde K_{j+1}^2}$,
and a symplectic transformation $\phi_{F_{j}}^t:\tilde D_{j+1}\times \tilde{ \mathcal  O}_j\rightarrow \tilde D_j$ with estimate
 \begin{eqnarray}\label{hmsol}
|\phi_{F_j}^t-id|^*_{{\tilde s_{j+1}},\tilde D_{j+1}\times\tilde{ \mathcal  O}_{j}} &\leq& \tilde\epsilon_{j}^{3\over4},\\
\pmb{\pmb |} D\phi_{F_j}^t-Id \pmb{\pmb |}^*_{\tilde s_{j+1},{\tilde s_{j+1}},\tilde D_{j+1}\times \tilde{ \mathcal  O}_{j}}&\leq& \tilde\epsilon_{j}^{3\over4},
\end{eqnarray}
such that  $\tilde H_{j+1}=\tilde H_j\circ\phi_{F_j}^1$  satisfies  the assumptions of $\tilde H_j$ with $j+1$ in place of $j$.
 \end{Lemma}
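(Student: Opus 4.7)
The plan is to execute one Lie-transform KAM step. Let $\tilde K := \tilde K_{j+1}$. Truncate Taylor and Fourier by
$$R_j := \mathcal T_{\tilde K}\sum_{|\alpha+\beta|\leq 2}\tilde P_{j,\alpha\beta}(\theta,\varphi;\xi)\,z^\alpha\bar z^\beta,$$
and regard $\tilde P_j-R_j$ as an acceptable error: the high-Fourier tail contributes $\lesssim \tilde\epsilon_j e^{-\tilde K\tilde\sigma_{j+1}}=\tilde\epsilon_j^{2}$ on $\tilde D_{j+1}$, while the Taylor tail (degree $\geq 3$) gains a factor $\tilde\eta_{j+1}=\tilde\epsilon_j^{1/3}$ from $\tilde s_{j+1}=\tilde\eta_{j+1}\tilde s_j$; both are well below $\tilde\epsilon_{j+1}=\tilde\epsilon_j^{4/3}$. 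Seek a generating Hamiltonian $F_j$ of the same Taylor--Fourier shape as $R_j$ such that $\{N_j^\sharp, F_j\}+R_j$ is diagonal in $|z_p|^2$ and $\varphi$-independent, where $N_j^\sharp := \la\omega_1,I\ra+\la\omega_2,J\ra+\sum_p(\tilde{\mathbf\Omega}_p^j+B_p+b_p^j)|z_p|^2$. Set $\tilde H_{j+1}:=\tilde H_j\circ\phi_{F_j}^1$ and absorb the diagonal residues into the normal form; the key work is then (a) solving the coefficient-wise homological equations via Proposition \ref{basic lemma}, (b) doing the frequency update and the measure estimate, and (c) estimating the Lie-series remainder.

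For each $(\alpha,\beta)$ with $|\alpha+\beta|\leq 2$ and $(\alpha,\beta)\neq(e_p,e_p)$, the scalar coefficient $F_{j,\alpha\beta}$ satisfies
$$-i\la\partial_\theta F_{j,\alpha\beta},\omega_1\ra-i\la\partial_\varphi F_{j,\alpha\beta},\omega_2\ra+\bigl(\zeta+\mathcal B(\theta)+\mathfrak b(\theta)\bigr)F_{j,\alpha\beta}=\tilde P_{j,\alpha\beta},$$
with $\zeta:=\la\alpha-\beta,\tilde{\mathbf\Omega}^j\ra$, $\mathcal B:=\la\alpha-\beta,B\ra$, $\mathfrak b:=\la\alpha-\beta,b^j\ra$. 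This matches exactly the template of Proposition \ref{basic lemma} (with $\omega=\xi\bar\omega$ and $\lambda=\xi\in(1/2,3/2)$); I verify its four hypotheses in order: (1) follows because $\tilde r_0-\tilde r_j\gtrsim r_n$, so $|B|_{\tilde r_0}\leq\mathcal E_n$ is shrunk by $e^{-Q_{n+1}r_n}$ to dominate $|\mathfrak b|_{\tilde r_j}\leq\tilde{\mathcal E}_j$; (2) is built into $\tilde r_0=2r_{n+1}\leq r_0/(2Q_{n+1}^4)$ from \eqref{infinite iteration sequence} and \eqref{newbegining}; (3) is the choice $\tilde K_{j+1}=\tilde\sigma_{j+1}^{-1}\ln\tilde\epsilon_j^{-1}$ combined with $\epsilon_0$ small; and (4) is the Melnikov assumption \eqref{melnikov} on $\tilde{\mathcal O}_j$, valid up to frequency $\tilde K_j\geq\tilde K_{j+1}$. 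The diagonal case $(\alpha,\beta)=(e_p,e_p)$ has $\zeta=0$, so one first subtracts the full average $[\tilde P_{j,e_pe_p}]$ (which will become the new frequency correction) and the $\theta$-dependent mean-zero part $[\tilde P_{j,e_pe_p}]_\varphi-[\tilde P_{j,e_pe_p}]$ (which will become the new $b_p^{j+1}$). Proposition \ref{basic lemma} then yields $|F_j|^*\leq c\tilde\epsilon_j/(\tilde\gamma_{j+1}\tilde\sigma_{j+1}^{3+\tau})\leq\tilde\epsilon_j^{3/4}$ by smallness of $\tilde\epsilon_0$, and standard Hamiltonian-flow estimates (Cauchy on $|F_j|$ together with integration of the ODE) deliver \eqref{hmsol}.

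Define the frequency update
$$\tilde{\mathbf\Omega}_p^{j+1}:=\tilde{\mathbf\Omega}_p^j+[\tilde P_{j,e_pe_p}],\qquad b_p^{j+1}:=b_p^j+\bigl([\tilde P_{j,e_pe_p}]_\varphi-[\tilde P_{j,e_pe_p}]\bigr),$$
so $[b_p^{j+1}]=0$ and $|\tilde{\mathbf\Omega}_p^{j+1}-\tilde{\mathbf\Omega}_p^j|^*+|b_p^{j+1}-b_p^j|^*\lesssim\tilde\epsilon_j$. Let $\tilde{\mathcal R}^{j+1}$ be the union over $|k|+|l|\leq\tilde K_{j+1}$ and $p,q\geq 1$ of the parameters violating \eqref{melnikov} at level $\tilde\gamma_{j+1}$ with $\tilde{\mathbf\Omega}^{j+1}$ in place of $\tilde{\mathbf\Omega}^j$; a routine Lipschitz translation of size $\tilde\epsilon_j$, together with the asymptotics $\tilde{\mathbf\Omega}_p^j\sim p^2$ (so that $\pm(\tilde{\mathbf\Omega}_p^j-\tilde{\mathbf\Omega}_q^j)$-resonances force $|p-q|\lesssim|k|+|l|$), gives $\mathrm{meas}(\tilde{\mathcal R}^{j+1})\leq C\tilde\gamma_{j+1}/\tilde K_{j+1}^2$. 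The new perturbation is the Lie-series remainder
$$\tilde P_{j+1}=\breve R+\int_0^1(1-t)\{\{N_j^\sharp,F_j\},F_j\}\circ\phi_{F_j}^t\,dt+\int_0^1\{\tilde P_j,F_j\}\circ\phi_{F_j}^t\,dt+(\tilde P_j-R_j)\circ\phi_{F_j}^1,$$
with $\breve R$ the error term of \eqref{errorper}; each summand is bounded in the weighted phase-norm on $\tilde D_{j+1}$ by $\tilde\epsilon_j^{4/3}=\tilde\epsilon_{j+1}$ using Cauchy estimates and $|F_j|^*\leq\tilde\epsilon_j^{3/4}$. \emph{The main obstacle} is precisely the polynomial loss $\tilde\sigma_{j+1}^{-(3+\tau)}$ forced by Proposition \ref{basic lemma} at every homological solve: it is why one can only achieve the sub-quadratic rate $\tilde\epsilon_j\mapsto\tilde\epsilon_j^{4/3}$, why this finite induction must be closed in exactly $N=2+[2^{n+1}c\tau U\ln Q_{n+1}/(6\tau+9)]$ steps (as encoded in \eqref{minor iteration sequence}), and why the outer scheme \eqref{infinite iteration sequence} uses such a rapidly shrinking sequence $\epsilon_n$ before restarting the inner induction at the next resonance scale $Q_{n+1}$.
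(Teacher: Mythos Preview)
Your Lie-transform scheme, the decomposition of $R_j$ by Taylor type, and the remainder bookkeeping are the same as in the paper's proof. The real problems are in how you invoke Proposition~\ref{basic lemma}.

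The most serious gap is the identification $\lambda=\xi$. In the paper's argument the whole point of the parameter $\lambda$ in Proposition~\ref{basic lemma} is to record the \emph{extra gain} $|p^2-q^2|$ (respectively $p^2$, $p^2+q^2$) coming from the Melnikov bound once one uses $\tilde{\mathbf\Omega}_p^j\sim p^2$: with $\zeta=(\tilde{\mathbf\Omega}_p^j-\tilde{\mathbf\Omega}_q^j)/\xi$ and $\lambda=|p^2-q^2|$ one gets $|F^{11}_{pq}|\leq C\,|p^2-q^2|^{-1}|R^{11}_{pq}|$ as in \eqref{operatornorm}. That off-diagonal decay is precisely what the matrix-norm Lemma~\ref{matrixnorm} requires to pass from coefficient-wise bounds to an $\ell^{a,\rho}$ operator bound on $F^{11}$, and hence to $|X_{F_j}|^*$. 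With your $\lambda=\xi\in(1/2,3/2)$ you obtain no decay in $(p,q)$, and the vector-field norm (an infinite sum over $p,q$) is simply not controlled; this is the step where the infinite-dimensional nature of the problem bites.

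Two of your hypothesis checks are also incorrect as written. You assert $\tilde K_j\geq\tilde K_{j+1}$ to justify assumption~(4), but from \eqref{minor iteration sequence} one has $\tilde\sigma_{j+1}=\tilde\sigma_j/2$ and $\ln\tilde\epsilon_j^{-1}=\tfrac{4}{3}\ln\tilde\epsilon_{j-1}^{-1}$, so in fact $\tilde K_{j+1}=\tfrac{8}{3}\tilde K_j>\tilde K_j$; the Melnikov information you need at level $\tilde K_{j+1}$ is not supplied by \eqref{melnikov} on $\tilde{\mathcal O}_j$ in the way you claim. For assumption~(1) the relevant strip for $B_p$ is $\mathbbm r=r_n$ (the lemma hypothesizes $B_p\in\mathscr B_{r_n}(\mathcal O_n)$), not $\tilde r_0$; your inequality $\tilde r_0-\tilde r_j\gtrsim r_n$ is false since $\tilde r_0-\tilde r_j\leq\tilde r_0=2r_{n+1}\ll r_n$, whereas what actually makes (1) work is the large gap $r_n-\tilde r_j\gtrsim r_0/Q_n^4$. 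Finally, Proposition~\ref{basic lemma} assumes $\zeta\neq 0$: the constant block $(\alpha,\beta)=(0,0)$ and the diagonal $(e_p,e_p)$ both have $\zeta=0$ and are handled in the paper by direct Fourier inversion using the weak-Liouvillean lower bound on $|\langle k,\bar\omega_1\rangle+\langle l,\bar\omega_2\rangle|$ for $l\neq 0$, not via Proposition~\ref{basic lemma}.
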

\begin{Remark}
The crucial point for us is that the functions $B_p(\theta;\xi),p\geq 1$ are fixed in the  iteration.
\end{Remark}

Once we have Proposition \ref{basic lemma},  the proof of this Lemma is standard KAM,  we leave it to  the appendix.

\subsection{The Construction of $\Phi_{n+1}$ and  $H_{n+1}$}

Now we are going to finish the proof of Proposition \ref{KAM iteration}.
  As a beginning, we fix $\tilde H_0=H_n$ with 
  \begin{eqnarray*}\tilde H_0={\tilde  e_0}+\la\omega_1,I\ra+\la\omega_2,J\ra+\sum_{p\geq1}(\tilde{ \mathbf \Omega}_p^0+ B_p+ b_p^0)|z_p|^2+\tilde P_0\end{eqnarray*}
 on  $D(\tilde r_0, \tilde s_0)\times\tilde{ \mathcal O}_0$, where
 $$
\tilde\epsilon_0=\epsilon_n,\tilde r_0=2r_{n+1},\tilde s_0=s_n,
\tilde K_0=K_n,\tilde{\mathcal O}_0=\mathcal O_n,\tilde e_0=e_n(\theta;\xi),
$$
$$
B_p=B_p^{n}(\theta;\xi),  b_p^0=0,
\tilde{\mathbf\Omega}_p^0=\mathbf\Omega_p^n(\xi), \tilde P_0=P_n(\theta,\varphi,z,\bar z;\xi).
$$

 By the assumption of Proposition \ref{KAM iteration},  $\epsilon_n=\epsilon_{n-1}\cdot Q_{n+1}^{-2^{n+1}c\tau U}$, $r_{n+1}={r_0\over4Q^{4}_{n+1}}$, the truncation parameter  $K_n=r_0^{-1}40Q_{n+1}^{4}\ln\epsilon_n^{-1},$ and the Diophantine condition for $\xi\in{\mathcal O}_n$  is given,
 thus the assumptions of iteration Lemma \ref{Lem4.2} are satisfied with $j=0$.
 Inductively, we iterate Lemma \ref{Lem4.2} $N$ times, we arrive at parameter set $\tilde{\mathcal O}_N$ and the Hamiltonian
\begin{equation}\tilde H_{N}=\tilde e_N+ \la\omega_1,I\ra+\la\omega_2,J\ra+\sum\limits_{p\geq 1}(\tilde{\mathbf\Omega}_p^N+B_p+ b_p^N) |z_p|^2+\tilde P_N\end{equation}
 on  $\tilde D_N\times\tilde{ \mathcal  O}_{N-1}$ with
 \begin{equation}\label{estimate with N iteration}
 \tilde e_N+\sum\limits_{p\geq1}(\tilde{\it\Omega}_p^N+  b_p^N) |z_p|^2+
 {\tilde P}_N\in{\mathcal F}_{\tilde r_N,\tilde s_N,\tilde{\mathcal O}_{N-1}}(\mathcal E_n+\tilde{\mathcal E}_N,\tilde{\mathcal E}_N,\tilde\epsilon_N)\end{equation}

First we have  the following observation: \begin{Lemma}\label{ite}
We can select
\begin{equation}\label{resonant}\mathcal O_{n+1}=\tilde{\mathcal O}_N.\end{equation}
\end{Lemma}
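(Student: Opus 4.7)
The plan is to verify that setting $\mathcal O_{n+1}:=\tilde{\mathcal O}_N$ produces a parameter set on which the Melnikov conditions \eqref{newstart 2} of Proposition \ref{KAM iteration} hold at the next outer index $n+1$. Since by construction of the finite induction the normal-form frequencies at the terminal stage satisfy $\mathbf\Omega_p^{n+1}=\tilde{\mathbf\Omega}_p^N$, only the divisor parameters need to be compared.

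First, iterating Lemma \ref{Lem4.2} for $j=0,\ldots,N-1$ yields a nested chain $\tilde{\mathcal O}_0\supset\cdots\supset\tilde{\mathcal O}_N$ on whose final element the small-divisor inequalities
\[|\la k,\omega_1\ra+\la l,\omega_2\ra\pm\tilde{\mathbf\Omega}_p^N|\geq\frac{\tilde\gamma_N}{(|k|+|l|+1)^\tau}\]
together with the analogous ones for $\pm(\tilde{\mathbf\Omega}_p^N\pm\tilde{\mathbf\Omega}_q^N)$ hold for all $|k|+|l|\le\tilde K_N$. Because shrinking the divisor constant and narrowing the truncation cutoff only strengthen the condition being inherited, the lemma reduces to establishing the two inequalities $\tilde\gamma_N\ge\gamma_{n+1}$ and $\tilde K_N\ge K_{n+1}$.

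The first comparison is immediate: using $\tilde\epsilon_q=\epsilon_n^{(4/3)^q}$, the tail $\sum_{q=0}^{N-1}\tilde\epsilon_q^{3/4}$ is bounded by $2\epsilon_n^{3/4}$ through the geometric decay, hence
\[\tilde\gamma_N=\gamma_n-2\sum_{q=0}^{N-1}\tilde\epsilon_q^{3/4}\ge\gamma_n-4\epsilon_n^{3/4}\ge\gamma_n-3\epsilon_n^{1/2}=\gamma_{n+1},\]
valid once $\epsilon_0$ is small enough that $\epsilon_n^{1/4}\le 3/4$. For the second I would substitute $\tilde\sigma_N=\tilde r_0/(5\cdot 2^{N+1})$ with $\tilde r_0=2r_{n+1}$ and $\ln\tilde\epsilon_{N-1}^{-1}=(4/3)^{N-1}\ln\epsilon_n^{-1}$ into $\tilde K_N=\tilde\sigma_N^{-1}\ln\tilde\epsilon_{N-1}^{-1}$, and then compare with $K_{n+1}=40 r_0^{-1}Q_{n+2}^4\ln\epsilon_{n+1}^{-1}$, relying on $\ln Q_{n+2}\le Q_{n+1}^U$ from Lemma \ref{sup-liouvillean} and on $\ln\epsilon_{n+1}^{-1}=\ln\epsilon_n^{-1}+2^{n+2}c\tau U\ln Q_{n+2}$.

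The main obstacle is precisely this last comparison: $\tilde K_N$ is fabricated from $Q_{n+1}$-scales, whereas $K_{n+1}$ contains the factor $Q_{n+2}^4$ and the enlarged logarithm $\ln\epsilon_{n+1}^{-1}$, both of which can be astronomically large in the Liouvillean regime. The compensating mechanism is the precise choice $N=2+[2^{n+1}c\tau U\ln Q_{n+1}/(6\tau+9)]$: its super-exponential multiplier $(4/3)^{N-1}$ inside $\tilde K_N$ is calibrated so that, after inserting $\ln Q_{n+2}\le Q_{n+1}^U$ and invoking the global constant $c>(18\tau+27)/(2\tau U)$, both the $Q_{n+2}^4$ factor and the $\ln\epsilon_{n+1}^{-1}/\ln\epsilon_n^{-1}$ ratio are absorbed. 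This is the one step where all the parameter choices in \eqref{infinite iteration sequence} must conspire together, and once that bookkeeping is performed the identification $\mathcal O_{n+1}=\tilde{\mathcal O}_N$ is legitimate.
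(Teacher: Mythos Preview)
Your proposal is correct and follows essentially the same approach as the paper: reduce the identification $\mathcal O_{n+1}=\tilde{\mathcal O}_N$ to the two comparisons $\tilde\gamma_N\ge\gamma_{n+1}$ and $\tilde K_N\ge K_{n+1}$, dispatch the first by a geometric-series bound, and handle the second via the calibration of $N$ together with $\ln Q_{n+2}\le Q_{n+1}^U$. Your sketch of the second inequality is in fact more explicit about the mechanism than the paper's own one-line assertion; the remaining verification is exactly the bookkeeping you describe.
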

\begin{proof}
 By \eqref{minor iteration sequence}, we have
$$\gamma_{n+1}=\gamma_0-3\sum\limits_{i=0}^{n}{\epsilon_i^{1\over2}}=\gamma_n-3{\epsilon_n^{1\over2}}\leq \gamma_n-2\sum\limits_{i=0}^{N}{\tilde\epsilon_i^{1\over2}}=\tilde \gamma_N,$$  and $$K_{n+1}= {40\ln \epsilon^{-1}_{n+1}\over r_{n+2}}\leq {{\ln {\tilde\epsilon_0}^{({4\over3})^{N-1}}\over r_{n+1} ({1\over2})^{N-1}}}=\tilde K_N.$$
 Thus for any $|k|+|l|\leq K_{n+1}$ we have
\begin{eqnarray*}\label{newstart of next large iteration}
&|\la k,\omega_1\ra+\la l,\omega_2\ra\pm\tilde {\mathbf\Omega}_p^{N}|\geq  {\tilde \gamma_N\over (|k|+|l|+1)^\tau}\geq  {\gamma_{n+1}\over (|k|+|l|+1)^\tau},&\nonumber\\
&|\la k,\omega_1\ra+\la l,\omega_2\ra\pm (\tilde{\mathbf\Omega}_p^{N}+\tilde {\mathbf\Omega}_q^{N})|\geq  {\tilde \gamma_N\over (|k|+|l|+1)^\tau}\geq  {\gamma_{n+1}\over (|k|+|l|+1)^\tau},&\\
&|\la k,\omega_1\ra+\la l,\omega_2\ra\pm (\tilde{\mathbf\Omega}_p^{N}-\tilde {\mathbf\Omega}_q^{N})|\geq  {\tilde \gamma_N\over (|k|+|l|+1)^\tau}\geq  {\gamma_{n+1}\over (|k|+|l|+1)^\tau},&|l|+|p- q|\neq0,\nonumber
\end{eqnarray*}
 which just ensures us to choose \begin{equation*}\mathcal O_{n+1}=\tilde{\mathcal O}_N.\end{equation*} \qed
\end{proof}

Recall \eqref{minor iteration sequence} and \eqref{infinite iteration sequence}, one has
\[
\begin{array}{ll}
&\tilde r_N=\tilde r_0-\sum_{j=0}^{N-1}{\tilde r_0 \over 2^{j+2}}\geq {\tilde r_0\over2}=r_n\geq r_{n+1},\\
&\tilde s_N=\sum_{j=0}^{N-1}(({\tilde\epsilon}_0)^{{1\over3}(\frac{4}{3})^j}) \tilde s_0= \epsilon_{n}^{({4\over3})^{2+[{2^{n+1}c{\tau}U\ln Q_{n+1}\over 6\tau+9}]}-1}\cdot s_n=s_{n+1},
\end{array}
\]
and then $\tilde H_N$ is regular on $D(r_{n+1},s_{n+1})$. 
Thus we can fix $$\Phi_{n+1}=\phi_{F_0}\circ\phi_{F_1}\circ\ldots\circ\phi_{F_{N-1}}:\tilde D_N\times\tilde{\mathcal O}_{N-1}\to\tilde D_0,$$ which now is defined  on  $D(r_{n+1},s_{n+1})\times\mathcal O_{n+1}\rightarrow D(r_n,s_n)$.
We also fix $H_{n+1}=\tilde H_N$ with
\begin{equation}
e_{n+1}=\tilde e_N, B_p^{n+1}=B_p^{n}+b^N_p,
\mathbf\Omega_p^ {n+1}=\tilde{\mathbf\Omega}_p^{N}, P_{n+1}=\tilde P_{N}.
  \end{equation}
  We have our Proposition once we have following estimate.
\begin{Lemma}\label{ite-2}
We have the following estimate:
$$ e_{n+1}+\sum\limits_{p\geq1} (\it\Omega_p^ {n+1}+B_p^{n+1}) |z_p|^2+P_{n+1}\in\mathcal F_{r_{n+1},s_{n+1},\mathcal O_{n+1}}(\mathcal E_{n+1},\mathcal E_{n+1},\epsilon_{n+1}).$$
 and furthermore we have
 \begin{eqnarray}
\label{phi-1-1} |\Phi_{n+1}-id|_{s_{n+1},D_{n+1}\times \mathcal O_{n+1}}^*& \leq& \epsilon_n^{\frac{1}{2}},\\
 \label{phi-1-2}\pmb{\pmb |}D(\Phi_{n+1}-id)\pmb{\pmb |}_{s_{n+1},s_{n+1},D_{n+1}\times \mathcal O_{n+1}}^{*}
&\leq& \epsilon_n^{\frac{1}{2}},\end{eqnarray}
\end{Lemma}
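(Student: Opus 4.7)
The plan is to read off the lemma as a bookkeeping corollary of the finite induction: all three bounds are just the accumulated estimates produced by iterating Lemma \ref{Lem4.2} exactly $N$ times, after checking that the finite-iteration scales $(\tilde r_N,\tilde s_N,\tilde{\mathcal E}_N,\tilde\epsilon_N)$ match the outer scales $(r_{n+1},s_{n+1},\mathcal E_{n+1},\epsilon_{n+1})$ dictated by \eqref{infinite iteration sequence}. From \eqref{estimate with N iteration} and the identifications $e_{n+1}=\tilde e_N$, $B_p^{n+1}=B_p^n+b_p^N$, $\mathbf\Omega_p^{n+1}=\tilde{\mathbf\Omega}_p^N$, $P_{n+1}=\tilde P_N$, one has $|e_{n+1}|^*_{r_{n+1}}\leq \mathcal E_n+\tilde{\mathcal E}_N$ and $|B_p^{n+1}|^*_{r_{n+1}}\leq \mathcal E_n+\tilde{\mathcal E}_N$ (note $|b_p^N|^*\leq \tilde{\mathcal E}_N$ and $|B_p^n|^*\leq \mathcal E_n$ with the common radius $r_{n+1}\leq \tilde r_N,\,r_n$), and $|\mathrm\Omega^{n+1}_p|^*\leq \tfrac12+\mathcal E_n+\tilde{\mathcal E}_N$.

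The core arithmetic step is to verify $\mathcal E_n+\tilde{\mathcal E}_N\leq \mathcal E_{n+1}=\mathcal E_n+\epsilon_n^{1/2}$ and $\tilde\epsilon_N\leq\epsilon_{n+1}$. The former is immediate: since $\tilde\epsilon_q=\epsilon_n^{(4/3)^q}$, the geometric-like sum gives $\tilde{\mathcal E}_N=\sum_{q=0}^{N-1}\tilde\epsilon_q\leq 2\epsilon_n\leq \epsilon_n^{1/2}$ by the smallness of $\epsilon_0$. The latter is where the peculiar choice $N=2+\bigl[\tfrac{2^{n+1}c\tau U\ln Q_{n+1}}{6\tau+9}\bigr]$ is used: taking logarithms, the inequality $\tilde\epsilon_N=\epsilon_n^{(4/3)^N}\leq \epsilon_n\cdot Q_{n+1}^{-2^{n+1}c\tau U}=\epsilon_{n+1}$ is equivalent to $\bigl((4/3)^N-1\bigr)\ln \epsilon_n^{-1}\geq 2^{n+1}c\tau U\ln Q_{n+1}$, and this follows from the hypothesis $\ln\epsilon_0^{-1}\leq \epsilon_0^{-1/(12\tau+18)}$ together with $c>\tfrac{18\tau+27}{2\tau U}$, using that $(4/3)^N$ is at least $e^{\ln(4/3)\cdot 2^{n+1}c\tau U\ln Q_{n+1}/(6\tau+9)}$. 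One then matches $\tilde r_N=\tilde r_0-5\sum_{j=1}^{N}\tilde\sigma_j\geq \tilde r_0/2=r_n\geq r_{n+1}$ and the telescoping $\tilde s_N=\prod_{j=1}^{N}\tilde\eta_j\cdot\tilde s_0=\epsilon_n^{\sum_{j=1}^{N}(4/3)^{j-1}/3}\,s_n=\epsilon_n^{(4/3)^N-1}s_n$, which equals $s_{n+1}$ by \eqref{infinite iteration sequence}.

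For \eqref{phi-1-1}--\eqref{phi-1-2} I will compose the transformations $\phi_{F_j}^1$ produced by Lemma \ref{Lem4.2} and propagate the estimates by the chain rule, exactly as is done at the end of the proof of Theorem \ref{kam}. Writing $\Phi_{n+1}=\Psi_N$ with $\Psi_j=\phi_{F_0}\circ\cdots\circ\phi_{F_{j-1}}$, one has $\pmb{\pmb|}D\Psi_j\pmb{\pmb|}\leq\prod_{k<j}(1+\tilde\epsilon_k^{3/4})\leq 2$ and a Lipschitz analogue $\leq 2$, and then the telescoping identity $\Psi_{j+1}-\Psi_j=D\Psi_j\cdot(\phi_{F_j}-\mathrm{id})+O(\cdot)$ yields $|\Psi_{j+1}-\Psi_j|^*\leq 2\tilde\epsilon_j^{3/4}$. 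Summing, $|\Phi_{n+1}-\mathrm{id}|^*\leq 2\sum_{j=0}^{N-1}\tilde\epsilon_j^{3/4}\leq 4\epsilon_n^{3/4}\leq \epsilon_n^{1/2}$; the same argument applied to $D\phi_{F_j}-\mathrm{Id}$ gives the derivative bound.

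The main obstacle is purely bookkeeping but delicate: keeping track of two nested accumulations (the shrinkage of the analytic radius from $\tilde r_0$ down to $\tilde r_N$ through the five-stage geometric splitting $\tilde\sigma_j=\tilde r_0/(5\cdot 2^{j+1})$, and the super-exponential decay $\tilde\epsilon_j=\tilde\epsilon_0^{(4/3)^j}$), and verifying that the \emph{single} shift contribution $\tilde{\mathcal E}_N$ fits inside the \emph{single} slot $\epsilon_n^{1/2}$ allocated at the $n$-th outer step of Proposition \ref{KAM iteration}. Once the parameter inequalities $\tilde r_N\geq r_{n+1}$, $\tilde s_N= s_{n+1}$, $\tilde{\mathcal E}_N\leq \epsilon_n^{1/2}$, $\tilde\epsilon_N\leq\epsilon_{n+1}$ are checked, all four required bounds follow immediately and Proposition \ref{KAM iteration} is proved. \qed
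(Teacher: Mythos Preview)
Your overall strategy is right and matches the paper's: accumulate the finite-iteration estimates from Lemma~\ref{Lem4.2}, check the scale matching, and telescope the compositions $\phi_{F_j}$. But there is one genuine gap in the arithmetic for $\tilde\epsilon_N\le\epsilon_{n+1}$.

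You write $\epsilon_{n+1}=\epsilon_n\cdot Q_{n+1}^{-2^{n+1}c\tau U}$. That is not the recursion in \eqref{infinite iteration sequence}: there one has $\epsilon_n=\epsilon_{n-1}\cdot Q_{n+1}^{-2^{n+1}c\tau U}$, so shifting $n\mapsto n+1$ gives
\[
\epsilon_{n+1}=\epsilon_n\cdot Q_{n+2}^{-\,2^{n+2}c\tau U}.
\]
Hence the inequality you need is $((4/3)^N-1)\ln\epsilon_n^{-1}\ge 2^{n+2}c\tau U\ln Q_{n+2}$, with $Q_{n+2}$ on the right, while $N$ is built from $Q_{n+1}$. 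Your chain of inequalities only reaches $Q_{n+1}$ and therefore does not close. The missing input is precisely the weak-Liouvillean hypothesis via Lemma~\ref{sup-liouvillean}: $\ln Q_{n+2}\le Q_{n+1}^{U}$. The paper uses this as follows: from $(4/3)^N-1\ge Q_{n+1}^{\,2^{n+1}c\tau U\ln(4/3)}$ and $\epsilon_n\le e^{-2c\tau U}$ one gets
\[
\tilde\epsilon_0^{(4/3)^N}\le \tilde\epsilon_0\,\exp\!\Big(-2^{n}c\tau U\,Q_{n+1}^{\,2^{n+1}c\tau U\ln(4/3)}\Big)\le \tilde\epsilon_0\,Q_{n+2}^{-2^{n+2}c\tau U}=\epsilon_{n+1},
\]
the last step using $\ln Q_{n+2}\le Q_{n+1}^{U}$. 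Without this, the role of the condition $\beta(\alpha)<\infty$ disappears from the proof, which cannot be right: it is exactly here that the arithmetic of $\alpha$ enters.

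Two minor remarks. First, $\tilde r_0=2r_{n+1}$ by \eqref{newbegining}, so $\tilde r_0/2=r_{n+1}$ (not $r_n$); the conclusion $\tilde r_N\ge r_{n+1}$ is unaffected. Second, for \eqref{phi-1-2} the paper does not telescope the derivatives directly as you suggest; it first bounds $|\Phi_{n+1}-id|^\ast\le c\epsilon_n^{3/4}$ and then applies a Cauchy estimate in the angle variables (losing a factor $\sim Q_{n+1}^{3}/r_0$), obtaining $\pmb{\pmb|}D(\Phi_{n+1}-id)\pmb{\pmb|}^\ast\le \epsilon_n^{2/3}\le\epsilon_n^{1/2}$. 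Your telescoping of $D\phi_{F_j}-\mathrm{Id}$ would also work, but you should track the two-index operator norms $\pmb{\pmb|}\cdot\pmb{\pmb|}_{\tilde s_\mu,\tilde s_\nu}$ carefully when composing.
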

\begin{proof}
First we note \begin{eqnarray*}({4\over3})^N-1 &\geq& ({4\over3})^{N-1}\geq ({4\over3})^{2^{n+1}c{\tau}U\ln Q_{n+1}}=Q_{n+1}^{2^{n+1}c\tau U\ln{ 4\over3}},
\end{eqnarray*}
and   $\tilde\epsilon_0=\epsilon_n\leq \epsilon_0<e^{-2c\tau U}$ by  our selection, we have
\begin{eqnarray*}
{\tilde\epsilon_0}^{({4\over3})^N}&=&{\tilde\epsilon_0}e^{({({4\over3})^N}-1)\ln{\tilde\epsilon_0}}\leq\tilde\epsilon_{0}e^{\ln{\tilde\epsilon_0Q_{n+1}^{2^{n+1}c\tau U\ln{ 4\over3}}}}\\
&&\leq\tilde\epsilon_{0}e^{-{2^nc\tau U}Q_{n+1}^{2^{n+1}c\tau U\ln{ 4\over3}}}\leq \tilde\epsilon_{0}Q_{n+2}^{-2^{n+2}c\tau U}= \epsilon_{n+1},
\end{eqnarray*}
therefore $$|X_{P_{n+1}}|^{*}_{s_{n+1},D_{n+1}\times \mathcal O_{n+1}}\leq \tilde\epsilon_N \leq  \epsilon_{n+1}.$$

The estimation below is a direct calculous,
$$|B_p^{n+1}|^*_{r_{n+1},{\mathcal  O_{n+1}}} \leq {\mathcal E}_n+\tilde{\mathcal E}_N\leq {\mathcal E}_{n}+\sum\limits_{i=0}^{j-1}\tilde\epsilon_i\leq {\mathcal E}_{n}+2\epsilon_n\leq{\mathcal E}_{n+1}. $$
Similarly, one has  $|e_{n+1}|^*_{r_{n+1},{\mathcal  O_{n+1}}} \leq {\mathcal E}_{n+1}$ and $|{\it\Omega}_p^{n+1}|^*_{\mathcal  O_{n+1}}\leq {\mathcal E}_{n+1}$.

To prove $(\ref{phi-1-1})$ and  $(\ref{phi-1-2})$, for any $0\leq \nu\leq N-1$, we let $$\phi^\nu=\phi_{F_0}\circ\phi_{F_1}\circ\ldots\circ\phi_{F_{\nu}}: \tilde D_{\nu+1}\times \tilde {\mathcal O}_{\nu}\rightarrow \tilde D_0.$$
  By the chain rule,  we have estimate,
\begin{eqnarray*}\pmb{\pmb |}D\phi^\nu\pmb{\pmb |}_{\tilde s_0,\tilde s_{\nu+1},\tilde D_{\nu+1}}\leq \prod\limits_{\mu=0}^{\nu} \pmb{\pmb |}D\phi_{F_{\mu}}\pmb{\pmb |}_{\tilde s_{\mu+1},\tilde s_{\mu+1},\tilde D_{\mu+1}}\leq e^{ \sum\limits_{\mu=0}^{\nu} \tilde\epsilon_{\mu}^{3\over4}}\leq e,\end{eqnarray*}
\begin{eqnarray*}\pmb{\pmb |}D\phi^\nu \pmb{\pmb |}^{\mathcal L}_{\tilde s_0,\tilde s_{\nu+1},\tilde D_{\nu+1}}&\leq&\sum\limits_{\mu=0}^{\nu}\pmb{\pmb |}D\phi_{F_\mu}\pmb{\pmb |}_{\tilde s_{\mu},\tilde s_{\mu+1},D_{\mu+1}}^{\mathcal L}\prod\limits_{j=0,j\neq\mu}^{\nu} \pmb{\pmb |}D\phi_{F_j}\pmb{\pmb |}_{\tilde s_{j+1},\tilde s_{j+1},\tilde D_{j+1}}\nonumber \\&\leq&  e\sum\limits_{\mu=0}^{\nu}\pmb{\pmb |}D\phi_{F_\mu}\pmb{\pmb |}^{\mathcal L}_{\tilde s_{\mu},\tilde s_{\mu+1},D_{\mu+1}}\leq 2e\sum\limits_{\mu=0}^{\nu}\tilde\epsilon^{3\over4}_\nu\leq c\epsilon_n^{3\over4}.\end{eqnarray*}
With mean value theorem,
\begin{eqnarray*}&&|\phi^\nu-id|^*_{\tilde s_0,{\tilde D_{\nu+1}}}\\&\leq &\sum\limits_{\mu=0}^{\nu-1}|\phi^{\mu+1}-\phi^\mu|^*_{\tilde s_0,\tilde D_{\mu+1}}\leq \sum\limits_{\mu=0}^{\nu-1}\pmb{\pmb |}D\phi^{\mu}\pmb{\pmb |}^*_{\tilde s_0,\tilde s_{\mu+1}}|\phi_{F_{\mu+1}}-id|^*_{\tilde s_{\mu+1},\tilde D_{\mu+1}}\\
&\leq&2\sum\limits_{\mu=0}^{\nu-1}\tilde\epsilon_{\mu}^{3\over4}\leq 2\sum\limits_{\mu=0}^{N-1}\tilde\epsilon_{\mu}^{3\over4}  \leq c\epsilon_n^{3\over4},\nonumber\end{eqnarray*}
and then with generalized Cauchy estimate,
\begin{eqnarray*}&&\pmb{\pmb |}D\phi^\nu-I\pmb{\pmb |}^*_{\tilde s_0,\tilde s_{\nu+1},\tilde D_{\nu+1}}
\leq {c\epsilon_n^{3\over4}*Q_{n+1}^3\over r_0}\leq \epsilon_n^{2\over3}.\end{eqnarray*}
Therefore $\Phi_{n+1}=\phi^{N-1}$ is the transformation we are searching.\end{proof}\qed


\subsection{Measure estimate}

At the ${j-1}$-th finite KAM iteration of the $n$-th infinite iteration,  we have to  exclude the following resonant set:
\begin{equation}\label{resonantset}\tilde{\mathcal
R}^{ j}=\bigcup_{|k|+|l|\leq \tilde K_j}(\bigcup_{ p\geq 1} \mathcal  R_{klp}^{ j}\bigcup_{|l|+|p-q|\neq0} \mathcal  R_{klpq}^{ j11
}\bigcup_{ p,q\geq 1}\mathcal  R_{klpq}^{ j2
}),\end{equation}
where
\begin{equation*}
\begin{array} {l}
\mathcal  R_{klp}^{j}=\{\xi\in \tilde{\mathcal
O}_{j-1}:|\langle k,\omega_1\rangle+\la l,\omega_2\ra \pm\tilde{\mathbf\Omega}_{p}^{j}|< {\tilde\gamma_j \over (|k|+|l|+1)^{\tau}},\},\\
 \mathcal  R_{klpq}^{j11}=\{\xi\in \tilde{\mathcal
O}_{j-1}:|\langle k,\omega_1\rangle+\la l,\omega_2\ra+\tilde{\mathbf\Omega}_{p}^{j}-\tilde{\mathbf\Omega}_q^{j}| < {\tilde\gamma_j \over (|k|+|l|+1)^{\tau}}\},\\
\mathcal  R_{klpq}^{j2}=\{\xi\in\tilde{\mathcal
O}_{j-1}:|\langle k,\omega_1\rangle+\la l,\omega_2\ra\pm(\tilde{\mathbf\Omega}_{p}^{j}+\tilde{\mathbf\Omega}_q^{j})| < {\tilde\gamma_j \over (|k|+|l|+1)^{\tau}}\}.
\end{array}
\end{equation*}

In order to estimate the measure of the resonant set $ \tilde{\mathcal
R}^{ j}$, we first need the following observation:

\begin{Lemma}\label{emptyset1}
For any  $|k|+|l|\leq \tilde K_{j-1}$ and $p,q\geq 1$, then the resonant set satisfy $$\mathcal  R_{klp}^{j}=\mathcal  R_{klpq}^{j11}=\mathcal  R_{klpq}^{j2}=\emptyset.$$
\end{Lemma}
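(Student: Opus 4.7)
The plan is a straightforward one-step perturbation argument. Because $\tilde{\mathbf\Omega}_p^{j}$ differs from $\tilde{\mathbf\Omega}_p^{j-1}$ only by the very small correction generated at the $(j-1)$-th KAM step, while the Diophantine margin has been shrunk from $\tilde\gamma_{j-1}$ to $\tilde\gamma_j$ by an amount $2\tilde\epsilon_{j-1}^{3/4}$ that comfortably dominates this correction, the Melnikov conditions at stage $j-1$ automatically carry over to stage $j$ over the same range $|k|+|l|\leq \tilde K_{j-1}$ of indices.

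First, I would invoke assumption (1) of Lemma \ref{Lem4.2} at stage $j-1$: for every $\xi\in\tilde{\mathcal O}_{j-1}$, $|k|+|l|\leq \tilde K_{j-1}$ and $p,q\geq 1$ (with $|l|+|p-q|\neq 0$ in the difference case),
$$
|\la k,\omega_1\ra+\la l,\omega_2\ra\pm \tilde{\mathbf\Omega}_p^{j-1}|,\
|\la k,\omega_1\ra+\la l,\omega_2\ra\pm(\tilde{\mathbf\Omega}_p^{j-1}\pm\tilde{\mathbf\Omega}_q^{j-1})|
\ \geq\ \frac{\tilde\gamma_{j-1}}{(|k|+|l|+1)^\tau}.
$$
Second, from the construction of $\tilde H_j$ out of $\tilde H_{j-1}$ (the $|z_p|^2$-coefficient of the $(\theta,\varphi)$-average of the truncated perturbation is absorbed into the normal form), the new normal frequency satisfies
$$
\sup_{p\geq 1}\,|\tilde{\mathbf\Omega}_p^{j}-\tilde{\mathbf\Omega}_p^{j-1}|^{*}_{\tilde{\mathcal O}_{j-1}}\ \leq\ \tilde\epsilon_{j-1},
$$
using the bound $|X_{\tilde P_{j-1}}|^{*}\leq\tilde\epsilon_{j-1}$.

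Third, by the triangle inequality all three divisors at level $j$ are bounded below by the corresponding divisors at level $j-1$ minus $2\tilde\epsilon_{j-1}$. Thus emptiness of $\mathcal R^{j}_{klp},\mathcal R^{j11}_{klpq},\mathcal R^{j2}_{klpq}$ reduces to the inequality
$$
\frac{\tilde\gamma_{j-1}-\tilde\gamma_j}{(|k|+|l|+1)^\tau}\ =\ \frac{2\tilde\epsilon_{j-1}^{3/4}}{(|k|+|l|+1)^\tau}\ \geq\ 2\tilde\epsilon_{j-1},
$$
which since $|k|+|l|\leq \tilde K_{j-1}$ amounts to the arithmetic condition $\tilde\epsilon_{j-1}^{1/4}(\tilde K_{j-1}+1)^\tau\leq 1$. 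This is immediate from the definition of $\tilde K_{j-1}$ in \eqref{minor iteration sequence} (logarithmic in $\tilde\epsilon_{j-2}^{-1}$) together with the smallness assumption \eqref{parametersetting} on $\epsilon_0$ which forces $\tilde\epsilon_{j-1}\leq\epsilon_n$ to be super-exponentially small.

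There is no genuine obstacle: the whole point of choosing $\tilde\gamma_j=\gamma_n-2\sum_{q=0}^{j-1}\tilde\epsilon_q^{3/4}$ in \eqref{minor iteration sequence} is precisely to leave room for this one-step shift of the normal frequencies, so the proof is really just a bookkeeping check. The only mildly delicate point is that the bound on $|\tilde{\mathbf\Omega}_p^{j}-\tilde{\mathbf\Omega}_p^{j-1}|$ must be \emph{uniform} in $p$; this follows because the modification enters through a single averaged coefficient of the perturbation, which inherits the uniform weighted bound $\tilde\epsilon_{j-1}$ from the vector-field norm.
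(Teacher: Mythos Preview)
Your proposal is correct and follows essentially the same approach as the paper: both arguments bound $|\tilde{\mathbf\Omega}_p^{j}-\tilde{\mathbf\Omega}_p^{j-1}|\leq\tilde\epsilon_{j-1}$ from the vector-field estimate on $\tilde P_{j-1}$, apply the triangle inequality against the Melnikov bound at step $j-1$, and reduce to the arithmetic check $\tilde\epsilon_{j-1}\tilde K_{j-1}^{\tau}\leq \tilde\gamma_{j-1}-\tilde\gamma_j=2\tilde\epsilon_{j-1}^{3/4}$. Your explicit remark that this is exactly why the $\tilde\gamma_j$ are defined with the margin $2\tilde\epsilon_{j-1}^{3/4}$ matches the paper's justification verbatim.
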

\begin{proof}
As an example, we prove that $$\mathcal  R_{klpq}^{j11}=\emptyset,\, \quad if \quad  |k|+|l|\leq \tilde K_{j-1}.$$
By the regularity of $X_{\tilde P_{j-1}}$, one has $$|{\tilde{\mathbf\Omega}}_p^j-{\tilde{\mathbf\Omega}}_p^{j-1}|_{\tilde{\mathcal O}_{j-1}}\leq \tilde\epsilon_{j-1}, p\geq 1.$$
It follows that for any $|k|+|l|\leq \tilde K_{j-1}$, there is
\begin{eqnarray*}&&|\langle k,\omega_1\rangle+\la l,\omega_2\ra+{\tilde{\mathbf\Omega}}_p^j-{\tilde{\mathbf\Omega}}_q^j|\\
&\geq& |\langle k,\omega_1\rangle+\la l,\omega_2\ra+{\tilde{\mathbf\Omega}}_p^{j-1}-{\tilde{\mathbf\Omega}}_q^{j-1}|-|\tilde{\mathbf\Omega}_p^{j}-\tilde{\mathbf\Omega}_p^{j-1}|-|\tilde{\mathbf\Omega}_q^{j}-\tilde{\mathbf\Omega}_q^{j-1}|\\
&\geq& {\tilde\gamma_{j-1}\over (|k|+|l|+1)^\tau}-2\tilde\epsilon_{j-1}\geq {\tilde\gamma_j\over (|k|+|l|+1)^\tau}. \end{eqnarray*}
The last inequality is possible since
 $\tilde\epsilon_{j-1} |\tilde K_{j-1}|^\tau\leq (\tilde\gamma_{j-1}-\tilde\gamma_j)$ by the iteration sequence \eqref{minor iteration sequence}. \qed
\end{proof}

\begin{Lemma}\label{emptyset2}
  If $ \max\{p,q\}> c(|k|+|l|), p\neq q$, then the resonant set satisfy $$\mathcal  R_{klp}^{j}=\mathcal  R_{klpq}^{j11}=\mathcal  R_{klpq}^{j2}=\emptyset.$$

\end{Lemma}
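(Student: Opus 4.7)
The strategy is to exploit the fact that, by the hypothesis of Proposition \ref{KAM iteration} combined with the iterative bound $|\tilde{\it\Omega}_p^j|^*_{\tilde{\mathcal O}_{j-1}} \leq \mathcal E_n + \tilde{\mathcal E}_{j-1}$ coming from the invariant class $\mathcal F_{\tilde r_{j-1}, \tilde s_{j-1}, \tilde{\mathcal O}_{j-1}}(\mathcal E_n+\tilde{\mathcal E}_{j-1}, \tilde{\mathcal E}_{j-1}, \tilde\epsilon_{j-1})$, one has the uniform asymptotic
\begin{equation*}
|\tilde{\mathbf\Omega}_p^j - p^2| \leq \tfrac{1}{2} + \mathcal E_n + \tilde{\mathcal E}_{j-1} \leq 1
\end{equation*}
for all $p \geq 1$ and all $\xi \in \tilde{\mathcal O}_{j-1}$, provided $\epsilon_0$ is small enough. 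On the other hand, since $\omega = \xi \bar\omega$ with $\xi \in (1/2, 3/2)$ and $\bar\omega$ fixed, one has a linear bound $|\la k,\omega_1\ra + \la l,\omega_2\ra| \leq C_0 (|k|+|l|)$ for some absolute constant $C_0$ depending only on $|\bar\omega|$.

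The three cases are then treated by a common triangle inequality. For $\mathcal R_{klp}^j$, since $\tilde{\mathbf\Omega}_p^j \geq p^2 - 1$, we estimate
\begin{equation*}
|\la k,\omega_1\ra + \la l,\omega_2\ra \pm \tilde{\mathbf\Omega}_p^j| \geq p^2 - 1 - C_0(|k|+|l|).
\end{equation*}
Choosing $c$ such that $c \geq 2C_0 + 2$, the assumption $p > c(|k|+|l|)$ forces $p^2 \geq c^2 (|k|+|l|)^2 \geq (C_0+1)(|k|+|l|) + 2 + \tilde\gamma_j/(|k|+|l|+1)^\tau$, which yields the required lower bound. For $\mathcal R_{klpq}^{j2}$, one uses $\tilde{\mathbf\Omega}_p^j + \tilde{\mathbf\Omega}_q^j \geq p^2 + q^2 - 2 \geq \max(p,q)^2 - 2$, and the same argument with $\max(p,q) > c(|k|+|l|)$ applies verbatim.

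The only step requiring a small additional observation is $\mathcal R_{klpq}^{j11}$. Here one writes
\begin{equation*}
|\tilde{\mathbf\Omega}_p^j - \tilde{\mathbf\Omega}_q^j| \geq |p^2 - q^2| - 2 = |p-q|(p+q) - 2 \geq (p+q) - 2 \geq \max(p,q) - 2,
\end{equation*}
using crucially that $p \neq q$, hence $|p-q| \geq 1$. Then $\max(p,q) > c(|k|+|l|)$ with $c$ chosen as above again delivers a lower bound exceeding $C_0(|k|+|l|) + \tilde\gamma_j/(|k|+|l|+1)^\tau$, so $\mathcal R_{klpq}^{j11}$ is empty.

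\textbf{Main obstacle.} The argument is essentially a linear-vs-quadratic comparison, so the only subtlety is selecting $c$ uniformly across all three cases and all KAM stages $j$. This is harmless because the bound $|\tilde{\mathbf\Omega}_p^j - p^2| \leq 1$ holds uniformly in $j$ by the invariance of $\mathcal F$ under the iteration, and because $\tilde\gamma_j \leq \gamma_0$ so the quantity $\tilde\gamma_j/(|k|+|l|+1)^\tau$ is universally bounded. One may therefore fix $c$ to be (say) $2C_0 + 4$ once and for all, independently of $j$, $n$, $k$, $l$, $p$, $q$.
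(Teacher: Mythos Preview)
Your proof is correct and follows the same approach as the paper: a triangle-inequality comparison between the quadratic growth of $\tilde{\mathbf\Omega}_p^j \sim p^2$ and the linear bound $|\langle k,\omega_1\rangle+\langle l,\omega_2\rangle|\leq \tfrac{3}{2}|\bar\omega|(|k|+|l|)$, using the uniform control $|\tilde{\mathbf\Omega}_p^j-p^2|\leq 1$ from the $\mathcal F$-class estimates. The paper only writes out the $\mathcal R_{klpq}^{j11}$ case explicitly and obtains the slightly sharper lower bound $|p^2-q^2|/2$, but your treatment of all three cases and your explicit discussion of the uniform choice of $c$ are equally valid.
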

\begin{proof}
As an example,  we only prove that $$\mathcal  R_{klpq}^{j11}=\emptyset,\,  \quad if \quad  \max\{p,q\}> c(|k|+|l|), p\neq q.$$
In fact, it follows from the following computations:
 \begin{eqnarray*}
&&|\langle k,\omega_1\rangle+\la l,\omega_2\ra +\tilde{\mathbf\Omega}_{p}^{j}-\tilde{\mathbf\Omega}_q^{j}|\\
&\geq&  |p^2-q^2|-|\tilde{\mathbf\Omega}_{p}^{j}-p^2|-|\tilde{\mathbf\Omega}_q^{j}-q^2|-|\xi|(|k|+|l|)|\bar\omega|\\
&\geq& |p^2-q^2|-2\epsilon_0-{3\over2}|\bar\omega|(|k|+|l|)>{|p^2-q^2|\over2}.
\end{eqnarray*} The  others can be handled in the same way. \qed
\end{proof}

\begin{Lemma}\label{emptyset3}
If  $l\neq 0$ and $p=q$, then the resonant set satisfy $\mathcal  R_{klpq}^{j11}=\emptyset$.\
\end{Lemma}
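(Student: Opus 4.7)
\textbf{Proof plan for Lemma \ref{emptyset3}.}

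The plan is to exploit the cancellation that occurs when $p=q$ and then fall back on the weak Liouvillean condition for $\bar\omega$, which applies precisely because $l\neq 0$. Concretely, when $p=q$ we have $\tilde{\mathbf\Omega}_p^j-\tilde{\mathbf\Omega}_q^j = 0$, so the inequality defining $\mathcal R_{klpq}^{j11}$ collapses to
\[
|\langle k,\omega_1\rangle+\langle l,\omega_2\rangle|<\frac{\tilde\gamma_j}{(|k|+|l|+1)^\tau}.
\]
Thus, unlike in Lemmas \ref{emptyset1} and \ref{emptyset2}, no spectral information about the iterates $\tilde{\mathbf\Omega}_p^j$ is needed; the statement reduces to a pure small-divisor estimate on the base frequency $\omega=\xi\bar\omega$.

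The next step is to substitute $\omega_1=\xi\bar\omega_1$, $\omega_2=\xi\bar\omega_2$ and pull out $\xi$:
\[
|\langle k,\omega_1\rangle+\langle l,\omega_2\rangle|=\xi\,|\langle k,\bar\omega_1\rangle+\langle l,\bar\omega_2\rangle|.
\]
Because $l\neq 0$, the second bullet of the weak Liouvillean hypothesis \eqref{wl} applies to give
\[
|\langle k,\bar\omega_1\rangle+\langle l,\bar\omega_2\rangle|\geq \frac{\gamma}{(|k|+|l|)^\tau}.
\]
Combining this with $\xi\in\mathcal O\subset(1/2,3/2)$ yields a lower bound that is uniform in $(k,l)$, and in particular independent of the iteration index $j$.

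The final step is the constant comparison: one checks
\[
\frac{\xi\gamma}{(|k|+|l|)^\tau}\;\geq\;\frac{\tilde\gamma_j}{(|k|+|l|+1)^\tau},
\]
which is where any slack in the definitions must be absorbed. Since $\tilde\gamma_j\leq\gamma_n\leq\gamma_0=\gamma$ by construction of the iteration sequence, and $\xi>1/2$ is strict on $\mathcal O$, this inequality follows from the gain provided by the factor $\bigl((|k|+|l|+1)/(|k|+|l|)\bigr)^\tau$ together with $\epsilon_0$ being small enough that $\tilde\gamma_j$ stays comfortably close to $\gamma$. The main (and essentially only) obstacle is precisely this constant-matching bookkeeping: one must verify that no $\xi\in\tilde{\mathcal O}_{j-1}$ can satisfy the strict inequality, which is straightforward once one observes that the WL lower bound is $j$-independent while $\tilde\gamma_j$ only shrinks along the iteration. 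Having shown that no $\xi$ meets the defining inequality, we conclude $\mathcal R_{klpq}^{j11}=\emptyset$, completing the proof.
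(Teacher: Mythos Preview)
Your proposal is correct and follows essentially the same route as the paper: when $p=q$ the normal frequencies cancel, and then one invokes the weak Liouvillean lower bound on $|\langle k,\bar\omega_1\rangle+\langle l,\bar\omega_2\rangle|$ (applicable since $l\neq 0$), pulls out the factor $\xi\in(1/2,3/2)$, and compares with the resonance threshold $\tilde\gamma_j/(|k|+|l|+1)^\tau$. The paper's proof is just the two-line version of exactly this computation; your write-up is more explicit about the final constant comparison, which in both treatments is a harmless bookkeeping step absorbed by the choice of $\gamma_0$ relative to $\gamma$.
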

\begin{proof}
In this case, since we assume $\bar{\omega}\in WL(\gamma,\tau,\beta),$ then
 \begin{eqnarray*}
 &&|\langle k,\omega_1\rangle+\langle l,\omega_2\rangle +{\tilde{\mathbf\Omega}}_p^{j}-{\tilde{\mathbf\Omega}}_q^{j}|\\
 &=&\xi|\langle k,\bar\omega_1\rangle+\langle l,\bar\omega_2\rangle |\geq {\gamma_0\over 2 (|k|+|l|+1)^{\tau}}\geq {\tilde\gamma_j\over 2 (|k|+|l|+1)^{\tau}}\end{eqnarray*}
 which just means  $\mathcal  R_{klpq}^{j11}=\emptyset$.
 \qed
 \end{proof}

By Lemma \ref{emptyset1}, \ref{emptyset2} and \ref{emptyset3}, one can reduce the resonant set \eqref{resonantset}  to be following set:

\begin{equation}\label{resonant2}
\begin{array}{lll}\tilde{\mathcal
R}^{ j}
=\bigcup\limits_{\tilde K_{j-1}<|k|+|l|\leq \tilde K_j}(\bigcup\limits_{  p\leq c(|k|+|l|)} \mathcal  R_{klp}^{ j}\bigcup\limits_{|l|+|p-q|\neq 0,\atop  p,q\leq c(|k|+|l|)} \mathcal  R_{klpq}^{ j11
}\bigcup\limits_{  p,q\leq c(|k|+|l|)}\mathcal  R_{klpq}^{ j2
}).
\end{array}
\end{equation}

\begin{Lemma} \label{finite total measure}
At $n$-th infinite iteration, we have to exclude the following  resonant set $\mathcal O_n\backslash \mathcal O_{n+1}= \bigcup\limits_{j=1}^N\tilde{\mathcal R}^j$,  and there exist constant $C$ such that
\begin{eqnarray}\label{small measure} meas(\tilde{\mathcal R}^j) &\leq& {C\tilde\gamma_j\over \tilde K_{j}^2},\\
\label{small measure-1} meas(\mathcal O_{n}\backslash \mathcal O_{n+1}) &\leq&{C\gamma_n\over K_n}.\end{eqnarray}
\end{Lemma}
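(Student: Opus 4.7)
The plan is to estimate the measure of each elementary resonance set by exploiting the explicit parameter dependence $\omega=\xi\bar\omega$ with $\xi\in(1/2,3/2)$, and then to sum first over $(k,l,p,q)$ (using the reduction in \eqref{resonant2}) to obtain \eqref{small measure}, and second over the finite KAM steps $j=1,\dots,N$ to obtain \eqref{small measure-1}.

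First I would fix an elementary set, say $\mathcal R^{j11}_{klpq}$, and view its defining quantity as a function $g(\xi):=\langle k,\omega_1\rangle+\langle l,\omega_2\rangle+\tilde{\mathbf\Omega}_p^j(\xi)-\tilde{\mathbf\Omega}_q^j(\xi)=\xi(\langle k,\bar\omega_1\rangle+\langle l,\bar\omega_2\rangle)+\bigl(\tilde{\mathbf\Omega}_p^j(\xi)-\tilde{\mathbf\Omega}_q^j(\xi)\bigr)$. By assumption (3) in Proposition \ref{KAM iteration} together with Lemma \ref{per NLS}-type bounds, $|\partial_\xi(\tilde{\mathbf\Omega}_p^j-\tilde{\mathbf\Omega}_q^j)|\le C(\mathcal E_n+\tilde{\mathcal E}_j)=O(\epsilon_0)$. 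On the other hand, on the resonance set itself, the smallness of $g(\xi)$ forces $|\xi\langle(k,l),\bar\omega\rangle|\approx|\tilde{\mathbf\Omega}_p^j-\tilde{\mathbf\Omega}_q^j|$; by Lemma \ref{emptyset2}, \ref{emptyset3} the only surviving cases are $p\neq q$, in which $|\tilde{\mathbf\Omega}_p^j-\tilde{\mathbf\Omega}_q^j|\ge|p^2-q^2|-O(\epsilon_0)\ge 3-O(\epsilon_0)$, giving $|\langle(k,l),\bar\omega\rangle|\ge 1$. Hence $|g'(\xi)|\ge 1-O(\epsilon_0)\ge 1/2$ on $\mathcal R^{j11}_{klpq}$, so a direct transversality argument yields $\mathrm{meas}(\mathcal R^{j11}_{klpq})\le C\tilde\gamma_j/(|k|+|l|+1)^\tau$. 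The same scheme treats $\mathcal R^j_{klp}$ (where $|\tilde{\mathbf\Omega}_p^j|\ge p^2-O(\epsilon_0)\ge 1/2$) and $\mathcal R^{j2}_{klpq}$ (where $\tilde{\mathbf\Omega}_p^j+\tilde{\mathbf\Omega}_q^j\ge 2-O(\epsilon_0)$), giving the same bound. Crucially, the smallness of $\epsilon_0$ absorbs any Lipschitz wobble of $\tilde{\mathbf\Omega}_p^j$ in $\xi$, so the transversality holds uniformly.

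Next I would use the reduced form \eqref{resonant2}: only the indices with $\tilde K_{j-1}<|k|+|l|\le\tilde K_j$ and $p,q\le c(|k|+|l|)$ contribute, hence
\[\mathrm{meas}(\tilde{\mathcal R}^j)\le C\tilde\gamma_j\sum_{m>\tilde K_{j-1}}m^{d+1}\cdot m^2\cdot\frac{1}{m^\tau}\le\frac{C\tilde\gamma_j}{\tilde K_{j-1}^{\tau-d-4}}.\]
Since $\tau>d+6$ gives $\tau-d-4>2$, and since the iteration sequence \eqref{minor iteration sequence} yields $\tilde K_j=\frac{5\cdot 2^{j+1}}{\tilde r_0}(4/3)^{j-1}\ln\tilde\epsilon_0^{-1}$, i.e.\ $\tilde K_j/\tilde K_{j-1}=8/3$, one obtains $\mathrm{meas}(\tilde{\mathcal R}^j)\le C\tilde\gamma_j/\tilde K_j^2$, which is \eqref{small measure}.

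For \eqref{small measure-1} I would sum \eqref{small measure} over $j=1,\dots,N$: using $\tilde\gamma_j\le\gamma_n$ and the geometric growth $\tilde K_j\ge(8/3)^{j-1}\tilde K_1$, together with the identification $\tilde K_1=K_n$ that follows from $\tilde r_0=2r_{n+1}=r_0/(2Q_{n+1}^4)$ and the definition of $K_n$ in \eqref{infinite iteration sequence}, one gets
\[\mathrm{meas}(\mathcal O_n\setminus\mathcal O_{n+1})\le C\gamma_n\sum_{j=1}^N\frac{(3/8)^{2(j-1)}}{K_n^2}\le\frac{C\gamma_n}{K_n^2}\le\frac{C\gamma_n}{K_n}.\]
The main obstacle is the transversality step: one must verify, for every elementary resonance and across all sign choices, that the smallness of the divisor itself forces $|\langle(k,l),\bar\omega\rangle|$ to be bounded below by a universal constant, despite $\bar\omega_1$ being Liouvillean. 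This is precisely what allows the estimate to bypass any direct use of the Diophantine/Liouvillean nature of $\bar\omega$ and to depend only on the positive spectral gap coming from $\tilde{\mathbf\Omega}_p^j\approx p^2$; once this is in hand, summation is routine thanks to the truncation $|k|+|l|>\tilde K_{j-1}$ and the arithmetic condition $\tau>d+6$.
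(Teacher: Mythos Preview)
Your summation over $(k,l,p,q)$ and over $j$ is essentially what the paper does; the identification $\tilde K_1=K_n$ and the geometric growth $\tilde K_j\sim(8/3)^{j-1}\tilde K_1$ are correct, and the condition $\tau>d+6$ is exactly what makes the tail sum converge. Where your argument departs from the paper---and where it has a real gap---is the transversality step.

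You differentiate $g(\xi)=\xi\langle(k,l),\bar\omega\rangle+(\tilde{\mathbf\Omega}_p^j-\tilde{\mathbf\Omega}_q^j)$ directly and claim $|\partial_\xi(\tilde{\mathbf\Omega}_p^j-\tilde{\mathbf\Omega}_q^j)|\le C(\mathcal E_n+\tilde{\mathcal E}_j)=O(\epsilon_0)$. This is not what the hypotheses give: Theorem~\ref{kam} only assumes $\sup_p|\mathbf\Omega_p-p^2|^*_{\mathcal O}<\tfrac12$, and item~(3) of Proposition~\ref{KAM iteration} propagates this as $|\it\Omega_p^n|^*_{\mathcal O_n}\le\tfrac12+\mathcal E_n$. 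The full Lipschitz norm contains the Lipschitz semi-norm, so you only know $|\partial_\xi\tilde{\mathbf\Omega}_p^j|\le\tfrac12+O(\epsilon_0)$, hence $|\partial_\xi(\tilde{\mathbf\Omega}_p^j-\tilde{\mathbf\Omega}_q^j)|\le 1+O(\epsilon_0)$. With this correct bound, your lower bound $|g'|\ge|\langle(k,l),\bar\omega\rangle|-|\partial_\xi(\tilde{\mathbf\Omega}_p^j-\tilde{\mathbf\Omega}_q^j)|$ can fail: for instance for $\mathcal R^j_{klp}$ with $p=1$, the resonance condition only yields $|\langle(k,l),\bar\omega\rangle|\ge\frac{2}{3}|\tilde{\mathbf\Omega}_1^j|-O(\gamma)\ge\tfrac13-O(\gamma)$, which does not beat $|\partial_\xi\tilde{\mathbf\Omega}_1^j|\le\tfrac12$. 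Your statement ``$|\tilde{\mathbf\Omega}_p^j-\tilde{\mathbf\Omega}_q^j|\ge|p^2-q^2|-O(\epsilon_0)$'' has the same defect: the correct bound is $\ge|p^2-q^2|-1-O(\epsilon_0)$.

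The paper avoids this by a different device: instead of differentiating $g$, it divides by $\xi$ and enlarges $\mathcal R^{j11}_{klpq}$ to
\[
Q^{j11}_{klpq}=\Bigl\{\xi:\bigl|\langle k,\bar\omega_1\rangle+\langle l,\bar\omega_2\rangle+\tfrac{1}{\xi}(\tilde{\mathbf\Omega}_p^j-\tilde{\mathbf\Omega}_q^j)\bigr|<\tfrac{2\tilde\gamma_j}{(|k|+|l|+1)^\tau}\Bigr\}.
\]
Now the $\xi$-derivative is $\partial_\xi(\tilde{\mathbf\Omega}_p^j-\tilde{\mathbf\Omega}_q^j)/\xi-(\tilde{\mathbf\Omega}_p^j-\tilde{\mathbf\Omega}_q^j)/\xi^2$, and the second term dominates because it is of size $\ge\tfrac{4}{9}|\tilde{\mathbf\Omega}_p^j-\tilde{\mathbf\Omega}_q^j|$, i.e.\ proportional to $|p^2\pm q^2|$. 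This yields $|\tfrac{d}{d\xi}(\cdot)|\ge\tfrac{1}{9}|p^2\pm q^2|$ and hence a per-set bound with an extra factor $|p^2\pm q^2|^{-1}$. Besides being more robust, this extra factor is what the paper actually uses in its summation (the $1/p^2$ and $1/|p^2-q^2|$ weights visible there). Your coarser per-set bound without this factor still sums thanks to $\tau>d+6$, so that part of your strategy is fine; the defect is solely in how you obtain transversality.
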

\begin{proof}
 By the definition,  we have $\mathcal O_{n}=\tilde{\mathcal O}_{0}$. By Lemma \ref{ite},   one has $\mathcal O_{n+1}=\tilde{\mathcal O}_{N}$. By \eqref{minor iteration sequence}, one has  $\mathcal O_{n}\backslash \mathcal O_{n+1}=\tilde{\mathcal O}_{0}\backslash\tilde{ \mathcal O}_N= \bigcup\limits_{j=1}^N\tilde{\mathcal R}^j$ at once.

Now, as an example, we will  focus on the measure of resonant set $$\mathcal  R_{klpq}^{j11}=\{\xi\in \tilde{\mathcal
O}_{j-1}:|\langle k,\omega_1\rangle+\la l,\omega_2\ra+\tilde{\mathbf\Omega}_{p}^{j}-\tilde{\mathbf\Omega}_q^{j}| < {\tilde\gamma_j \over (|k|+|l|+1)^{\tau}}\}$$ with  $p\neq q$ and $(k,l)\neq0$.
Since $\xi\in\tilde{\mathcal O}_{j-1}\subseteq ({1\over2},{3\over2})$, it follows that
 $$\mathcal  R_{klpq}^{j11}\subseteq Q_{klpq}^{j11}=\{\xi\in \tilde{\mathcal
O}_{j-1}:|\langle k,\bar\omega_1\rangle+\la l,\bar\omega_2\ra+{\tilde{\mathbf\Omega}_{p}^{j}-\tilde{\mathbf\Omega}_q^{j}\over\xi}| < {2\tilde\gamma_j \over (|k|+|l|+1)^{\tau}}\}$$

By a direct computation,
$$|{d\over d\xi} (\langle k,\bar\omega_1\rangle+\la l,\bar\omega_2\ra+ {1\over\xi}(\tilde{\mathbf\Omega}_{p}^{j}-\tilde{\mathbf\Omega}_q^{j}))|\geq {1\over 9}|p^2\pm q^2|,$$
then
$$meas(\mathcal  R_{klpq}^{ j11})\leq meas(Q_{klpq}^{ j11})\leq {2\tilde\gamma_j \over 9(|k|+|l|+1)^{\tau}}{1\over |p^2-q^2|}.$$
Thus the total measure can be estimates as: \begin{eqnarray*}
meas(\tilde{\mathcal R}^j)\leq&\sum \limits _{K_{j}<|k|+|l|\leq K_{j+1}\atop (k,l)\in\Z^2\times\Z^d}&(\sum \limits _{ 1\leq p\leq c(|k|+|l|)}{2\tilde\gamma_j \over 9(|k|+|l|+1)^{\tau}}{1\over p^2} \\
&&+\sum \limits _{p\neq q,\atop1\leq p,q\leq c(|k|+|l|)}{2\tilde\gamma_j \over  9(|k|+|l|+1)^{\tau}}{1\over| p^2-q^2|}\\&&
+\sum \limits _{  1\leq p,q\leq c(|k|+|l|)}{4\tilde\gamma_j \over 9(|k|+|l|+1)^{\tau}}{1\over p^2})\\
\leq&\sum \limits _{K_{j}<|k|+|l|\leq K_{j+1}}&\big({4\tilde\gamma_j \over (|k|+|l|+1)^{\tau}}+{2\tilde\gamma_j \ln(|k|+|l|)\over (|k|+|l|+1)^{\tau-2}}\\&&+{4\tilde\gamma_j \over (|k|+|l|+1)^{\tau-2}}\big)\\
\leq& {C\tilde\gamma_j\over K_{j}^2}.&
\end{eqnarray*}
The last inequality is possible since we choose $\tau>d+6$. The measure of the other resonant set be estimates similarly. There is also
 $$meas(\mathcal O_n\backslash \mathcal O_{n+1}) \leq \sum_{i=1}^N {C\tilde\gamma_j\over K_{j}^2}\leq {C\tilde\gamma_1\over \tilde K_0}\leq {C\gamma_n\over K_n}.$$
  \qed
\end{proof}

\appendix\section{Appendix: Proof of Lemma \ref{Lem4.2}}
\begin{proof}
At the $j$-th step of the finite iteration, the Hamiltonian $\tilde H_j = \tilde{\mathcal N}_j +\tilde P_j$
is studied as a small perturbation of some normal form $\tilde{\mathcal N}_j$. A transformation $\phi_{F_{j}}^1$ is set up so  that
$\tilde H_j\circ\phi_{F_{j}}^1 =\tilde{\mathcal N}_{j+1} +\tilde P_{j+1}$
with new normal form $\tilde{\mathcal N}_{j+1}$ and a much smaller perturbation $\tilde P_{j+1}$.
We drop the index $j$ of $\tilde H_j ,
\mathcal N_j , \tilde P_j , \phi_{F_j}^1$ and shorten the index $j+1$ to be $+$.

 Let $R$ to be $2$-order Taylor polynomial truncation of $\tilde P$, that is
 \begin{eqnarray}\label{truncation}
 R&=&\sum\limits_{|k|+|l|\leq\tilde K,\atop |\alpha|+|\beta|\leq 2} {\tilde P}_{kl\alpha\beta}e^{i\la(k,l),(\theta,\varphi)\ra}z^\alpha\bar z^\beta\\
 &=:&R^0+\la R^{01},z\ra+\la R^{10},z\ra+\la R^{02}z,z\ra+\la R^{ 11} z,\bar z\ra+\la R^{20}\bar z,\bar z\ra,\nonumber
 \end{eqnarray}
 where $\la \cdot,\cdot\ra$ is formal product for two column vectors,  $R^0, R^{01}, R^{10}, R^{02},$ $ R^{11}$ and $ R^{20}$
  depend on $\theta,\varphi$ and $\xi$.

  By $\llbracket R\rrbracket$ denote the part of $R$ in generalized average part as follows
$$\llbracket R\rrbracket=[R^0]_{\varphi}+\la [diag(R^{ 11})]_{\varphi} z,\bar z\ra,$$
where $diag(R^{11})$ is the the diagonal of $R^{11}$.
The transformation $\phi_F^1=X^1_F$ is constructed as the time-$1$-map
 of a Hamiltonian vector field $X_F$, where $F$ is of the same form as $R$,
  $$F=F^0+\la F^{01},z\ra+\la F^{10},z\ra+\la F^{02}z,z\ra+\la F^{ 11} z,\bar z\ra+\la F^{20}\bar z,\bar z\ra,$$
and  $\llbracket F(\theta,\varphi)\rrbracket=0$. The function $F(\theta,\varphi)$  is also an approximate solution of the homological equation
\begin{equation}\label{homo}
\{\tilde{\mathcal N},F\}=R-\sum\limits_{ p\geq1}[{R^{11}_{pp}}(\theta,\varphi)]_{\varphi}|z_p|^2-\llbracket R\rrbracket\end{equation}
with $\tilde{\mathcal N}={\tilde e}(\theta;\xi)+\la \omega_1,I\ra+\la \omega_2,J\ra+\la( \tilde{\mathbf \Omega}+ B(\theta;\xi)+b(\theta;\xi)) z,\bar z\ra$.
In the following, we denote $\partial_{(\omega_1,\omega_2)}=\la\omega_1,\partial_\theta\ra+\la\omega_2,\partial_\varphi\ra$. Then $F$ should satisfy the homological equations:
\begin{equation}\label{homo-13}
\begin{array}{ll}
\partial_{(\omega_1,\omega_2)}F^{0}=R^{0}-[R^0]_{\varphi},&\\
\partial_{(\omega_1,\omega_2)}F^{01}_{p}-i( {\tilde{\mathbf\Omega}_p}+ B_p+ b_p)
F^{01}_p=R^{01}_{p},&p\geq 1,\\
\partial_{(\omega_1,\omega_2)}F^{10}_{q}+i( {\tilde{\mathbf\Omega}_q}+B_q+ b_q)
F^{10}_q=R^{10}_{q},&q\geq 1,\\
\partial_{(\omega_1,\omega_2)}F^{20}_{pq}-i( {\tilde{\mathbf\Omega}_p}+ B_p+b_p+{\tilde{\mathbf\Omega}_q}+ B_q+ b_q)
F^{02}_{pq}=R^{z z}_{02},&p,q\geq 1,\\
\partial_{(\omega_1,\omega_2)}F^{20}_{pq}+i( {\tilde{\mathbf\Omega}_p}+ B_p+ b_p+{\tilde{\mathbf\Omega}_q}+ B_q+ b_q)
F^{20}_{pq}=R^{20}_{pq},&p,q\geq 1,\\
\partial_{(\omega_1,\omega_2)}F^{11}_{pq}+i( {\tilde{\mathbf\Omega}_p}+B_p+ b_p-{\tilde{\mathbf\Omega}_q}-B_q- b_q)
F^{ 11}_{pq}=R^{11}_{pq},&p\neq q,\\
\partial_{(\omega_1,\omega_2)}F^{11}_{pq}=R^{11}_{pq}-[R_{pq}^{11}]_{\varphi},&p=q.
\end{array}
\end{equation}

For the first equation from  \eqref{homo-13},
 $$\la\partial_\theta F^{0},\omega_1\ra+\la\partial_\varphi F^{0},\omega_2\ra=R^{0}-[R^0(\theta,\varphi)]_{\varphi}.$$
 The fourier expansion is given, and we arrive at equation $$i(\la k,\omega_1\ra+\la l,\omega_2\ra){ \hat F}^{0}_{(k,l)}={\hat R}^0_{(k,l)} $$
for  $(k,l)\in \Z^2\times \Z^d$ with  $|k|+|l|\leq K$ and $l\neq 0$. Recall $\omega=\xi\bar\omega$ and by weak Liouvillean condition
 \eqref{melnikov}, we have
$$|F^0(\theta,\varphi)|^*_{\tilde r-3\tilde\sigma}\leq {{\tilde \gamma}^{-1}\tilde \sigma}^{-3-\tau}   |R^{0}|^*_{\tilde r}.$$
The last equation in  \eqref{homo-13} is considered in the same way and  $$|F^{11}_{pp}(\theta,\varphi)|^*_{\tilde r-3\tilde\sigma}\leq {{\tilde \gamma}^{-1}\tilde \sigma}^{-3-\tau}   |R_{pp}^{11}|^*_{\tilde r}.$$

The left equations in \eqref{homo-13} will be discussed in the same way, as an example,  we do this for
 \begin{equation}\label{homo16}\partial_{(\omega_1,\omega_2)}F^{11}_{pq}+i({\tilde{\mathbf\Omega}_p}+B_p+ b_p-{\tilde{\mathbf\Omega}_q}-B_q- b_q)
F^{ 11}_{pq}=R^{11}_{pq}, p\neq q.\end{equation}
To obtain a solution of these equations with useful estimates we want to apply Proposition  \ref{basic lemma}. The assumptions of this proposition are now verified.

We  set $(\mathbbm{r}, \tilde{\mathbbm{r}},\tilde\sigma,\gamma, K,({\gamma\over 2\eta})^{1\over \tau+2},\eta_1,\eta_2,\tilde\eta)$ to be  $(r_n,\tilde r,\tilde\sigma,\tilde \gamma,\tilde K, K^n,\mathcal E_n,\tilde{\mathcal E},\tilde\epsilon)$. The assumption $[B_{p}(\theta;\xi)]=[b_{p}(\theta;\xi)]=0$, $|B_p|_{r_n}\leq \mathcal E_n$, $|b_p|_{\tilde r}\leq \tilde {\mathcal E}$ and $|R^{11}_{pq}|_{\tilde r}\leq\tilde\epsilon\leq \tilde {\mathcal E} $ are given by condition from Lemma \ref{Lem4.2}.

\noindent {\it Verification of assumption \ref{cond1}:} In fact, by Lemma \ref{sup-liouvillean}, one has $Q_{n+1}\geq Q_n^{\mathcal A}$ and $\ln Q_{n+1}\leq Q_n^U$ for any $n\geq 1$.   Then since $r\leq 2r_{n+1}<r_n$, one has
\begin{eqnarray*}e^{-|r_n-r|Q_{n+1}}&\leq& e^{-{Q_{n+1}\over 8Q_n^3}}\leq e^{-{Q_{n+1}^{1\over2}\ln Q_{n+1}\over 8Q_n^3}}
\leq e^{-\ln Q_{n+1}Q_n^{{\mathcal A\over2}-3}}\\
&\leq&  e^{-\ln Q_{n+1} n2^{n+1}c\tau U} =Q_{n+1}^{-n2^{n+1}c\tau U}\leq\epsilon_n\leq\tilde {\mathcal E}.\end{eqnarray*}
Thus we have our conclusion as $ \mathcal E_n<1$.

\noindent {\it Verification of assumption \ref{cond2}:} This is a direct computation since $\mathbbm{r}_1=r_n$, $\mathbbm{r}_2=\tilde r\leq r_{n+1}$ and $|B_p|_{r_n}\leq \mathcal E_n$.

\noindent {\it Verification of assumption \ref{cond3}:}
Since $K^n=[({\gamma_0\over 2\epsilon_n})^{1\over \tau+2}]$ and by \eqref{minor iteration sequence}, one has
\begin{eqnarray}\label{key inequality}
\tilde K&\leq&\ln ({ 1\over\tilde\epsilon_0})^{({4\over3})^N}({2^NQ^3_{n+1}
\over r_0})=  {1\over r_0} {({8\over3})^N}Q_{n+1}^3\ln\tilde\epsilon_0^{-1}
   \\&=& {1\over r_0} {({8\over3})^{[{2^{n+1}c{\tau}U\ln Q_{n+1}\over 6\tau+9}]+2}}Q_{n+1}^3\ln\tilde\epsilon_0^{-1}
   \leq Q_{n+1}^{({2^{n+1}c{\tau}U\over 6\tau+9}+5)}\ln\tilde\epsilon_0^{-1}\nonumber\\
   &\leq& Q_{n+1}^{2^{n+1}c \tau U\over 6\tau+9}\cdot Q_{n+1}^3\cdot \tilde\epsilon_0^{-c(\tau)}\nonumber\\
   &\leq& {\tilde\epsilon_n}^{-{1\over 6\tau+9}}\cdot\tilde\epsilon_0^{-{1\over (12\tau+18)}}\cdot \tilde\epsilon_0^{-{1\over (12\tau+18)}} \nonumber\\
   &\leq&{\tilde\epsilon_0}^{-{2\over 6\tau+9}}\leq K^n.\nonumber
\end{eqnarray}

\noindent {\it Verification of assumption \ref{cond4}:}
For any $\xi\in\tilde{\mathcal O}$, we consider any pair $((k,l),p,q)$ with $|k|+|l|\leq {\tilde K}$ and $p\neq q$:

 If $\max\{ p,q\}\geq c(|k|+|l|)$, one has
$$|\la k,\omega_1\ra+\langle l,\omega_2\ra+\tilde{\mathbf\Omega}_{p}-\tilde{\mathbf\Omega}_q|\geq |p^2-q^2|-|\tilde{\mathbf\Omega}_{p}-p^2|-|\tilde{\mathbf\Omega}_q-q^2|-(|k|+|l|)|\omega|\geq {|p^2-q^2|\over2}.$$

If $\max\{ p,q\}\leq (1+\alpha)(|k|+|l|)$, by \eqref{melnikov}, one has
$$|\la k,\omega_1\ra+\langle l,\omega_2\ra +\tilde{\mathbf\Omega}_{p}-\tilde{\mathbf\Omega}_q|\geq {\tilde\gamma\over (|k|+|l|+1)^{\tau} } \geq {\tilde\gamma\over (|k|+|l|+1)^{\tau+2} } |p^2-q^2|.$$
Recall $\omega=\xi\bar\omega$, one has 
\begin{equation}\label{lowerbound}
|\la k,\bar\omega_1\ra+\langle l,\bar\omega_2\ra +{\tilde{\mathbf\Omega}_p-\tilde{\mathbf\Omega}_q\over\xi}|\geq {\tilde\gamma\over (|k|+|l|+1)^{\tau+2} }|p^2-q^2|.
\end{equation} We have our assumption once $\zeta$ and $\lambda$ are in placed by $\tilde{\mathbf\Omega}_{p}-\tilde{\mathbf\Omega}_q\over\xi$ and $p^2-q^2$.

Thus, Proposition  \ref{basic lemma}  applies, and the approximate solution $F^{11}_{pq}$ satisfies the estimate
\begin{eqnarray}\label{operatornorm}
|F^{11}_{pq}|_{D(\tilde r-2\tilde\sigma)}&\leq&{{\tilde \gamma}^{-1}\tilde \sigma}^{-3-\tau} |p^2-q^2|^{-1}  |R^{11}_{pq}|_{D( \tilde r-\tilde\sigma)}.
\end{eqnarray}
To obtain the norm of $X_F$, we  need following useful Lemma.
\begin{Lemma}\label{matrixnorm}{(\textbf{M.3 in \cite{KPo}})} Let $R = (R_{pq})_{p, q\geq 1}$ be a bounded operator on $\ell^2$ which depends on $x\in \T^n$ such that all
elements $(F_{pq})$ are analytic on $D(r)$. Suppose $F= (F_{pq})_{ p,q\geq 1}$ is another operator on $\ell ^2$ depending on
$x$ whose elements satisfy
\[\sup_{x\in D(r)}|F_{pq}(x)|\leq {1\over|p- q|}\sup_{x\in D(r)}
|R_{pq}(x)|, p \neq q, \]
and $F_{pq} = 0$. Then $F$ is a bounded operator on $\ell^2$ for every $x \in D(r)$, and
\[\sup_{x\in D(r-\sigma)}\|F(x)\|\leq {c\over\sigma^2}\sup_{x\in D(r)}
\|R(x)\|.\]
\end{Lemma}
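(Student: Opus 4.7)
The plan is to combine two standard ingredients: a Schur-test ``matrix calculus'' lemma exploiting the off-diagonal decay $1/|p-q|$, and a Fourier expansion in the $x$-variable that transfers entry-wise sup-norm bounds into operator-norm bounds, at the price of the $\sigma^{-2}$ Cauchy-type loss.

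First I would isolate the constant-coefficient core: if $A$ is a bounded operator on $\ell^2$ and $B$ satisfies $B_{pp}=0$, $|B_{pq}|\le |A_{pq}|/|p-q|$ for $p\ne q$, then $\|B\|\le C\|A\|$ for a universal constant $C$. Indeed, for each row, by Cauchy--Schwarz and $\sum_{q\neq 0}q^{-2}=\pi^2/3$,
\[
\sum_{q\ne p}|B_{pq}|\le \Bigl(\sum_{q\ne p}\tfrac{1}{(p-q)^2}\Bigr)^{1/2}\Bigl(\sum_q |A_{pq}|^2\Bigr)^{1/2}\le C\|Ae_p\|_{\ell^2}\le C\|A\|,
\]
and symmetrically for columns; Schur's test then concludes.

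Next I would handle the $x$-dependence by Fourier expansion. Writing $R(x)=\sum_k\hat R(k)e^{i\langle k,x\rangle}$ and similarly for $F$, shifting the contour of the Fourier integral by $\pm r k/|k|$ produces $\|\hat R(k)\|\le Me^{-|k|r}$, where $M:=\sup_{D(r)}\|R(x)\|$. Applying the constant-coefficient step to each Fourier mode yields $\|\hat F(k)\|\le CMe^{-|k|r}$; summing,
\[
\sup_{D(r-\sigma)}\|F\|\le \sum_k\|\hat F(k)\|\,e^{|k|(r-\sigma)}\le CM\sum_{k\in\Z^n}e^{-|k|\sigma}\le c M \sigma^{-n},
\]
which in the relevant dimension $n=2$ recovers the stated $\sigma^{-2}$ bound.

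The delicate step is the transfer of Schur bounds to Fourier modes. The hypothesis couples entry-wise suprema $\sup_{D(r)}|F_{pq}|$ and $\sup_{D(r)}|R_{pq}|$ rather than individual Fourier coefficients, so one must construct a dominating majorant $\tilde A^{(k)}$ with $|\hat R(k)_{pq}|\le \tilde A^{(k)}_{pq}$ and $\|\tilde A^{(k)}\|\le Me^{-|k|r}$, and then exploit $|\hat F(k)_{pq}|\le \tilde A^{(k)}_{pq}/|p-q|$ to invoke the constant-coefficient lemma. The obstruction is that the pointwise-in-$(p,q)$ suprema matrix $(\sup_x|R_{pq}(x)|)$ may have operator norm substantially larger than $\sup_x\|R(x)\|$; this is precisely the source of the $\sigma^{-2}$ loss, coming through a Cauchy-type interpolation that converts entry-wise suprema into operator-norm control of each $\hat R(k)$.
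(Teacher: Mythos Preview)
The paper does not prove this lemma at all; it is quoted verbatim from Kappeler--P\"oschel (Lemma~M.3 in \cite{KPo}) and invoked as a black box. So there is no in-paper argument to compare against, and the relevant question is whether your sketch actually establishes the statement.

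Your constant-coefficient step is fine (the minor slip that $\sum_q|A_{pq}|^2=\|A^*e_p\|^2$, not $\|Ae_p\|^2$, is harmless since $\|A^*\|=\|A\|$). The problem is the passage to $x$-dependent operators. You write that ``applying the constant-coefficient step to each Fourier mode yields $\|\hat F(k)\|\le CMe^{-|k|r}$'', but this requires $|\hat F(k)_{pq}|\le |\hat R(k)_{pq}|/|p-q|$, which the hypothesis does \emph{not} give: the assumption links $\sup_{D(r)}|F_{pq}|$ to $\sup_{D(r)}|R_{pq}|$, not mode to mode. You correctly flag this in your final paragraph, and you also correctly observe that the majorant matrix $(\sup_x|R_{pq}(x)|)_{p,q}$ need not be bounded by $\sup_x\|R(x)\|$ (a rank-one example like $R_{pq}(x)=a_p\bar a_qe^{i(p-q)x}$ on a strip already shows this). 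But the proposed resolution---``a Cauchy-type interpolation that converts entry-wise suprema into operator-norm control of each $\hat R(k)$''---is precisely the missing argument, and you have not supplied it. As written, the proof is circular at the crucial point.

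A secondary symptom that the mechanism is not quite right: your Fourier summation produces $\sigma^{-n}$ with $n$ the torus dimension, and you recover $\sigma^{-2}$ only by declaring ``the relevant dimension $n=2$''. The lemma is stated for general $n$, and in the paper's own application the angle variable lies in $\T^{2+d}$. If the loss really came from summing $\sum_{k\in\Z^n}e^{-|k|\sigma}$, the exponent would depend on $n$; the fact that it does not is a hint that the analyticity in $x$ enters differently from a straight Fourier sum. In the Kappeler--P\"oschel argument the $\sigma^{-2}$ arises from a two-derivative Cauchy estimate tied to the $1/|p-q|$ decay (via an auxiliary conjugation $E(t)R(x)E(-t)$ with $E(t)=\mathrm{diag}(e^{ipt})$, which introduces a single extra periodic variable regardless of $n$), not from an $n$-fold lattice sum. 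Your outline does not capture this, and without it the argument does not close.
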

Then recall \eqref{operatornorm}, we have
\begin{equation*}
|F^{11}|_{a,\rho,D{(\tilde r-3\tilde\sigma)}}\leq {256 \over{{\tilde \gamma}\sigma}^{5+\tau}}  |R^{11}|_{a,\rho,D{(\tilde r-\tilde\sigma)}}\leq {256 \over{{\tilde \gamma}\sigma}^{5+\tau}} |X_R|_{D(\tilde r,\tilde s)}.
 \end{equation*}
Multiplying by $z,\bar z$ we get

\[{1\over \tilde s^2}|\la F^{11}z,\bar z\ra|_{D(\tilde r-3\tilde \sigma,\tilde s)}\leq  |F^{11}|_{a,\rho,D{(\tilde r-3\tilde\sigma,\tilde s)}},\]
and finally by Cauchy's estimate we have

$$|X_{\la F^{11}z,\bar z\ra}|_{\tilde s,D(\tilde r-3\tilde\sigma,\tilde s)}\leq  {256\over \tilde\gamma\tilde\sigma^{6+\tau}}|X_R|_{\tilde s,D(\tilde r,\tilde s)}.$$

To obtain the estimate of the Lipschitz semi-norm, we proceed as follows. Shortening $\triangle_{\xi\eta}$ to be $\triangle$ and applying it to $(\ref{homo16})$, one gets that
\begin{eqnarray}\label{rotation homological equation2}
&&\partial_{(\omega_1,\omega_2)}\triangle F_{pq}^{11}+i\mathcal T_K(( {\tilde{\mathbf\Omega}_p}+ B_p+b_p-{\tilde{\mathbf\Omega}_q}- B_q-b_p)\triangle F_{pq}^{11})\\
&=&-i\mathcal T_K(\triangle ( {\tilde{\mathbf\Omega}_p}+ B_p+b_p-{\tilde{\mathbf\Omega}_q}-B_q-b_p)
F_{pq}^{11})+i\triangle R_{pq}^{11}:=Q_{pq}\nonumber
\end{eqnarray}
By \eqref{lowerbound} and \eqref{operatornorm},
\begin{eqnarray*}|Q_{pq}|_{D(\tilde r-2\tilde\sigma)}&\leq& |\triangle( {\tilde{\mathbf\Omega}_p}+B_p+b_p-{\tilde{\mathbf\Omega}_q}-B_q-b_q)|\cdot |F_{pq}^{11}|_{D(\tilde r-2\tilde\sigma)}+|\triangle  R^{11}_{pq}|_{D(\tilde r-2\tilde\sigma)}\nonumber  \\
&\leq &  {|\triangle( {\tilde{\mathbf\Omega}_p}+B_p+b_p-{\tilde{\mathbf\Omega}_q}-B_q-b_q)|\over|p^2-q^2|\tilde\gamma\tilde\sigma^{3}} |R_{pq}^{11}|_{D(\tilde r-\tilde\sigma)}+|\triangle  R^{11}_{pq}|_{D(\tilde r-\tilde\sigma)}.\end{eqnarray*}
 We again apply Proposition \ref{basic lemma}  to \eqref{rotation homological equation2}, one has
\begin{eqnarray*}&&|\triangle F_{pq}^{11}|_{D(\tilde r-2\tilde\sigma)}\\ &\leq&   {|\triangle( {\tilde{\mathbf\Omega}_p}+B_p+b_p-{\tilde{\mathbf\Omega}_q}- B_q-b_q)|\over|p^2-q^2|\tilde\gamma\tilde\sigma^{8+2\tau}} |R_{pq}^{11}|_{D(\tilde r-\tilde\sigma)}+|\triangle  R^{11}_{pq}|_{D(\tilde r-\tilde\sigma)},\end{eqnarray*}
Again by Lemma \ref{matrixnorm}, one has
\begin{eqnarray*}&&|\triangle F^{11}|_{a,\rho,D(\tilde r-3\tilde\sigma)} \\ &\leq&   {|\triangle( {\tilde{\mathbf\Omega}_p}+B_p+b_p-{\tilde{\mathbf\Omega}_q}- B_q-b_q)|\over|p^2-q^2|\tilde\gamma^2\tilde\sigma^{10+2\tau}}  |R^{11}|_{a,\rho,D(\tilde r-\tilde\sigma)}+|\triangle  R^{11}|_{a,\rho,D(\tilde r-\tilde\sigma)},\end{eqnarray*}
Dividing $|\triangle F_{pq}^{11}|_{D(\tilde r-3\tilde\sigma,\tilde s)}$ by $|\xi-\eta|\neq0$ and take supreme over $\tilde{\mathcal O}$, one gets
\begin{eqnarray*}
|X_{\la F^{11}z,\bar z\ra}|_{\tilde s,D(\tilde r-3\tilde\sigma,\tilde s)\times \tilde{\mathcal O}}^{\mathcal L}
&\leq& {|X_R|_{\tilde s,D(\tilde r-\tilde\sigma,\tilde s)\times \tilde{\mathcal O}}\over\tilde\gamma^2\tilde\sigma^{10+2\tau}} + |X_{\la R^{11}z,\bar z\ra}|_{\tilde s,D(\tilde r,\tilde s)\times \tilde{\mathcal O} }^{\mathcal L}
\end{eqnarray*}
Thus one has
\begin{equation}|\la X_{F_{pq}^{11}z,\bar z\ra}\ra|^*_{\tilde s,D(\tilde r-3\tilde\sigma,\tilde s)\times\tilde{\mathcal O}}\leq {|X_{\la R^{11}z,\bar z\ra}|_{\tilde s,D(\tilde r,\tilde s)\times\tilde{\mathcal O}}\over\tilde\gamma^2\tilde\sigma^{10+2\tau}}  + |X_{\la R^{11}z,\bar z\ra}|_{\tilde s,D(\tilde r,\tilde s)\times\tilde{\mathcal O}}^{\mathcal L}.\end{equation} For the left  terms of $F$, that is $\la F^{01},z\ra,\la F^{10},\bar z\ra, \la F^{02}z,z\ra$ and $\la F^{20}\bar z,\bar z\ra,$   one can obtain the same results with similar technical.

The estimation on  approximate solution $F$ is  obvious,
\begin{equation}| X_{ F }|_{\tilde s,D(\tilde r-3\tilde\sigma,\tilde s)\times \tilde{\mathcal O}}^*\leq {256\over\tilde\gamma^2\tilde\sigma^{10+2\tau}} | X_{ R }|_{\tilde s,D(\tilde r,\tilde s)\times \tilde{\mathcal O}}^*. \end{equation}
Let $\phi_F^1$ to be the time-$1$ map of $X_F^t$, we have
 \begin{eqnarray} \tilde H_{+}&=&\tilde H\circ\phi_{F}^1=(\mathcal N+\mathcal R)\circ X_F^1+(\tilde P-\mathcal R)\circ X_F^1\\
&=& \mathcal N+\{\mathcal N,F\}+\mathcal R\nonumber\\
&&+
 \int_0^1 \{\{(1-t){\mathcal N},F\}+\mathcal R,F\}\circ \phi_{F}^{t}dt +(\tilde P-\mathcal R)\circ
\phi^1_{F}\nonumber \\
&=&\mathcal N+\llbracket R\rrbracket+\mathcal R_e+
 \int_0^1 \{R(t),F\}\circ \phi_{F}^{t}dt +(\tilde P-\mathcal R)\circ
\phi^1_{F}\nonumber \\
&=&\mathcal N_++\tilde P_+,\nonumber
\end{eqnarray}
where  $\mathcal N_+=\mathcal N+\llbracket R\rrbracket$. For the new normal form $\mathcal N_+$, we set \[\tilde e_+=\tilde e+[R^0(\theta,\varphi)]_{\varphi},\,\tilde{\it\Omega}_p^+=\tilde{\it\Omega}_p+[R^{11}_{pp}(\theta,\varphi)],\,\tilde b_p^+= \tilde b_p+[R^{11}_{pp}(\theta)]_{\varphi}-[R^{11}_{pp}(\theta,\varphi)].\]
The perturbation
$$\tilde P_+=\mathcal R_e+ \int_0^1 \{R(t),F\}\circ \phi_{F}^{t}dt +(\tilde P-\mathcal R)\circ
\phi^1_{F}$$ with $R(t)=(1-t)\llbracket R\rrbracket+tR$ and
\[
\begin{array}{lll}
\mathcal R_e=&\sum\limits_{p\geq 1}e^{i\mathcal B_p}\Gamma_K(e^{-i\mathcal B_p}R_p-(b_p+\Gamma_K B_p)F^{01}_p)z_p\\
&+\sum\limits_{p\geq 1}e^{-i\mathcal B_p}\Gamma_K(e^{i\mathcal B_p}R_p+(b_p+\Gamma_K B_p)F_p^{10})\bar z_p\nonumber\\
&+\sum\limits_{p,q\geq 1,\atop  p\neq q}e^{i\mathcal B_p-i\mathcal B_p}\Gamma_K(e^{i\mathcal B_p-i\mathcal B_q}R_p-(b_p-b_q+\Gamma_K (B_p-B_q))F^{11}_{pq})z_p\bar z_q \nonumber\\
&+\sum\limits_{p,q\geq 1}e^{i\mathcal B_p+i\mathcal B_q}\Gamma_K(e^{-i\mathcal B_p-i\mathcal B_q}R_p-(b_p+b_q+\Gamma_K (B_p+B_q))F^{20}_{pq})z_p z_q\nonumber\\
&+\sum\limits_{p,q\geq 1}e^{-i\mathcal B_p-i\mathcal B_q}\Gamma_K(e^{i\mathcal B_p+i\mathcal B_q}R_p+(b_p+b_q+\Gamma_K (B_p+B_q))F^{02}_{pq})\bar z_p\bar z_q,\nonumber
\end{array}
\]
where operator $\Gamma_K=I-\mathcal T_K$.

Following \cite{P3}, one has $$|X_{\tilde P_+-\mathcal R_e}|^*_{\tilde s_+,D(\tilde r_+,\tilde s_+)}\leq \tilde\epsilon^{4\over3}.$$
 Recall \eqref{errorper},
\begin{equation}
|X_{\mathcal R_e}|^*_{\tilde s,D(\tilde r-5\tilde\sigma,\tilde s)}\leq e^{-K\tilde \sigma}\cdot |\sup_{p\geq 1}(|e^{|\texttt{Im} {\mathcal B}_p|_{\tilde r}^*}+|b_p|^*_{\tilde r})\cdot|X_F|^*_{\tilde s,D(\tilde r,\tilde s)}\leq \tilde\epsilon^{4\over3}.
\end{equation}
Thus we have new bound on perturbation $\tilde P_+$ and  Lemma \ref{Lem4.2} follows.\qed
\end{proof}
%

\section*{Acknowledgements}
X.Xu was supported by NSFC grant (11301072).  J. You  was partially supported by NSFC grant (11471155) and
973 projects of China (2014CB340701).   Q. Zhou was partially supported by \textquotedblleft Deng Feng Scholar Program B\textquotedblright of Nanjing University, Specially-appointed professor programme of Jiangsu province and NSFC grant (11671192).

\end{document}